\documentclass[amstex,12pt]{amsart}
\usepackage{geometry}
\geometry{left=27mm,right=27mm,top=25mm,bottom=27mm}
\usepackage{mathtools,amssymb,amsthm,mathrsfs,color,lineno,paralist,graphicx,float}
\usepackage[linktocpage,backref=page,colorlinks,
linkcolor=blue,
anchorcolor=blue,
citecolor=blue,
]{hyperref}
\usepackage{color}
\usepackage[T1]{fontenc}
\usepackage[utf8]{inputenc}
\usepackage{mathrsfs}
\usepackage{cleveref}
\usepackage{mathscinet}
\setcounter{tocdepth}{1}

\usepackage{calc}
\linespread{1.03}

%%%%%%%%%%%%%color%%%%%%%%%%%%%%%

%%%%%%%%%%%%%%%%%%%%%%%%%%%%%%%%%

\usepackage{etoolbox}

%%%%%%%%%%%%%%%%%%%%%%%%%%%%%%%%%

\newtheorem{thm}{Theorem}[section]
\theoremstyle{plain}
\theoremstyle{definition}
\newtheorem{proposition}[thm]{Proposition}

\newtheorem{lemma}[thm]{Lemma}
\newtheorem{definition}[thm]{Definition}
\newtheorem{claim}[thm]{Claim}

\newtheorem{rem}[thm]{Remark}

%%%%%%%%%%%%%%%%%%%%%%%%%%%%%%%%%

\numberwithin{equation}{section}

%%%%%%%%%%%%%%%%%%%%%%%%%%%%%%%%%

\definecolor{lgray}{gray}{0.9}

%%%%%%%%%%%%%%%%%%%%%%%%%%%%%%%%%

\def\leq{\leqslant}
\def\geq{\geqslant}

\def\hat{\widehat}

%%%%%%%%%%%%%%%%%%%%%%%%%%%%%%%%%

\allowdisplaybreaks
\renewcommand*{\backref}[1]{}
\renewcommand*{\backrefalt}[4]{\quad \tiny
  \ifcase #1 (\textbf{NOT CITED.})
  \or    (Cited on page~#2.)
  \else   (Cited on page~#2.)
  \fi}
\makeatletter

%%%%%%%%%%%%%%%%%%%%%%%%%%%

\title[Observable full-horseshoes]{Observable full-horseshoes for Lagrangian flows advected by stochastic 2D Navier-Stokes equations}

\author{Wen Huang}
\address[Wen Huang]{School of Mathematical Sciences\\ University of Science and Technology of China\\ Hefei, Anhui, 230026, China}
\email{wenh@mail.ustc.edu.cn}

\author{Jianhua Zhang}
\address[Jianhua Zhang]{School of Mathematical Sciences\\ University of Science and Technology of China\\ Hefei, Anhui, 230026, China}
\email{leapforg@mail.ustc.edu.cn}

\keywords{Lagrangian  flow; hyperbolic SRB measure; full-horseshoes; Lyapunov exponents; $K$-system}
\date{\today}
\subjclass[2020]{37H05, 37A50, 60H10.}

\begin{document}

\begin{abstract}
In this paper, we mainly study the turbulence  of  Lagrangian flow advected by  stochastic 2D Navier-Stokes equations.  It is proved that this system has observable full-horseshoes. The observable  full-horseshoe means that  it is a  kind of chaotic structure and   occurs on  any two  disjoint non-empty closed balls.   
%This is  a refinement result about our recent work in \cite{axXiv: 230305027} under some kind of  hyperbolicity condition. %The main idea is to  use  the hyperbolicity of stationary measure of Lagrangian flows to lead to  $K$-property.  
\end{abstract}
\maketitle

%\tableofcontents

\section{Introduction}
\label{23-8-6-1544}

Turbulence   is  said that   a fluid motion becomes complicated, unpredictable, irregular and chaotic over time.  This kind of  physical phenomenon ubiquitously exists in nature (for example, see \cite{majda2006nonlinear,pope2000turbulent,salmon1998lectures,vallis2017atmospheric} ).  There are amount  of  experimental  results to indicate that some turbulent fluids systems  are  sensitive dependence on initial conditions, such as \cite{MR0284067,GS,Lib}.   Recently, Bedrossian  et.al  made a breakthrough.   And  they   proved  some kinds of fluid models including Lagrangian flow advected by  stochastic 2D Navier-Stokes equation and Galerkin truncation of stochastic 2D Navier-Stokes equations, have positive Lyapunov exponents in \cite{MR4404792,MR4372219,bedrossian2021chaos}.  

In  chaotic dynamical systems, another kind of landmark of chaos is  horseshoe, which was introduced by Smale in \cite{MR0182020}.    It is a powerful geometry tool to describe the complex behaviour of the systems, for example see \cite{MR2140094}. It is natural to ask whether there is  horseshoe or horseshoe-like structure for turbulent dynamical system.  Lately, the authors have proved the Galerkin truncation of stochastic 2D Navier-Stokes equations have full-horseshoes  which is a weaker chaotic structure than horseshoes in \cite{axXiv: 230305027}.    In this paper, we  devote  to describe turbulence  of  Lagrangian flows advected by stochastic 2D Navier-Stokes equations by using full-horseshoes. Despite the statistical property of random  Lagrangian flow has extensively been investigated a lot (for example, see \cite{CFVP, Zbl 1205.76133, Zbl 0796.76084, Zbl 1273.76083, Zbl 0193.27106,MR4264955}),   few literatures  characterize the chaos of  random Lagrangian flow from   perspective of sample pathwise.  In this paper,  we obtain that the random Lagrangian flow  has full-horseshoes on any two disjoint non-empty closed balls for almost sure sample and initial   velocity (see  \Cref{23-7-4-2042}). The  crucial point of our result is that  it provides  a possibility to capture the chaotic behaviour of  random Lagrangian flow.
%Despite  the existence of full-horseshoes for Lagrangian flows advected by stochastic 2D Navier-Stokes equations is obvious by  \cite{axXiv: 230305027} and \cite{MR4404792}. In this paper, we  devote   to describe locations  of  full-horseshoes  for this kind of  Lagrangian flows.  T

\subsection{Random Lagrangian flow}
In this subsection, we introduce  random Lagrangian flow and some basic assumptions.   Denote $\mathbb{T}^2$ as  the two-dimensional torus.  The Lagrangian flow advected by   stochastic  2D Navier-Stokes equations   is  diffeomorphisms $\varphi_t(\omega, u): \mathbb{T}^2\to\mathbb{T}^2$ for $t\geq0$ defined by following random ordinary differential 
equation 
\begin{align}
\label{23-8-4-1509}
\frac{\mathrm{d}\varphi_t(\omega, u)x}{\mathrm{d}t}=u_t(\omega,u)( \varphi_t(\omega, u)x),\quad \varphi_0(\omega, u)x=x.
\end{align}
  Here, the random velocity field $u_t: \Omega\times\mathbb{H}\times\mathbb{T}^2\to\mathbb{R}^2$ is  the solution of  following 2D   incompressible stochastic Navier-Stokes equation with $u_0(\cdot,u)\equiv u$,
\begin{align}
\label{23-7-4-1717}
\partial_tu_t+(u_t\cdot \nabla) u_t=\epsilon\Delta u_t-\nabla p +\dot{W}_t,\quad \text{div}(u_t)=0,
\end{align}
where $\epsilon$ is the fixed positive constant viscosity, $p$ is the pressure,  and  $\dot{W}_t$ is the stochastic external force  described more precisely below. 

Take the phase space of \Cref{23-7-4-1717} as following  Hilbert space
 $$\mathbb{H}=\{u\in \mathbf{H}^{s}(\mathbb{T}^2;\mathbb{R}^2): \int_{\mathbb{T}^2}u\mathrm{d}x=\boldsymbol{0}, \text{div}(u)=0\},$$
where $s\geq 4$.
%Let  
%$$\mathbb{L}^2=\{u\in L^2(\mathbb{T}^2,\mathbb{R}^2): \int_{\mathbb{T}^2}u\mathrm{d}x=\boldsymbol{0}, \text{div}(u)=0\}$$
  %be the Hilbert space of square integrable, mean-zero, divergence-free vector fields on $\mathbb{T}^2$.
Define a basic  of $\mathbb{H}$ by 
\begin{align*}
e_{k}(x)=
\begin{cases}
\frac{k^{\perp}}{|k|}\sin (k\cdot x)\quad & k\in\mathbb{Z}^2_+,
\\\frac{k^{\perp}}{|k|}\cos (k\cdot x)\quad & k\in\mathbb{Z}^2_- ,
\end{cases}
\end{align*}
where $k^{\perp}=(k_2, -k_1)$,  
$$\mathbb{Z}^2_+=\{(k_1, k_2)\in \mathbb{Z}^2: k_2>0\}\cup\{(k_1, k_2)\in \mathbb{Z}^2: k_1>0, k_2=0\}\text{ and }\mathbb{Z}_-^2=-\mathbb{Z}^2_+.$$  
Denote $(\Omega,\mathscr{F},\mathbb{P})$ as an infinite-dimensional  Wiener space, i.e. 
\begin{align}
\label{23-8-28-1017}
(\Omega,\mathscr{F},\mathbb{P})=\big(C_0([0,+\infty), \mathbb{R}),\mathscr{F}_0,\mathbb{P}_0\big)^{\mathbb{Z}^2_0},
\end{align}
where $\mathscr{F}_0$ is the Borel $\sigma$-algebra of $C_0([0,+\infty), \mathbb{R})$ with compact open topology, $\mathbb{P}_0$ is the Wiener measure on $(C_0([0,+\infty), \mathbb{R}),\mathscr{F}_0)$, and $\mathbb{Z}_0^2=\mathbb{Z}_+^2\cup\mathbb{Z}_-^2$.  Then $\{W^k_t(\omega):=\omega_k(t)\}_{k\in\mathbb{Z}^2_0}$ is  a family of independent   one-dimensional Wiener processes on $(\Omega,\mathscr{F},\mathbb{P})$.  Throughout  this paper, we will consider a white-in-time stochastic forcing $\dot{W}_t$ being the form   
$$\dot{W}_t=\sum_{k\in\mathbb{Z}_0^2}q_ke_k\dot{W}_t^k, $$
where  $q_k$ are non-negative constants with following assumptions   
\begin{enumerate}[(A1)]
\item [(Low mode non-degeneracy):]  $q_k>0$ whenever $k=(\pm1,0)$ or $(0,\pm1)$;
\item [(High mode non-degeneracy):] there exist some   $\alpha\in (s+1, s+2)$ and some large positive integer $L$ such that 
\begin{align*}
q_k\approx |k|^{-\alpha},
\end{align*}
holds for any  $k\in\{ k\in\mathbb{Z}_0^2: \max\{|k_1|, |k_2|\}\geq L\}$.
\end{enumerate}
%Additionally, fix an $\sigma\in(\alpha-2, \alpha-1)$ and define the Hilbert space 
%$$\mathbb{H}=\{u\in \mathbf{H}^{\sigma}(\mathbb{T}^2;\mathbb{R}^2): \int_{\mathbb{T}^2}u\mathrm{d}x=\boldsymbol{0}, \text{div}(u)=0\}$$

The above two assumptions mainly ensure that the Lagrangian flow has positive Lyapunov exponents. The reader can refer  to \cite[Theorem 1.6]{MR4404792} for details.   
Now,  we summary the well-posedness and properties of velocity flow $(u_t)_{t\geq0}$ and Largrangian  flow $(\varphi_t)_{t\geq0}$.  
\begin{proposition}
\label{23-9-2-2037}
Under above setting, one has that 
\begin{enumerate}
\item For  all $u\in\mathbb{H}$, \Cref{23-7-4-1717}  exists a unique mild solution $(u_t(\omega,u))_{t\geq0}\in C([0, \infty); \mathbb{H})$ with $u_0(\omega,u)=u$ for almost trajectory. And the Markov process $(u_t)_{t\geq0}$ admits a unique Borel stationary measure $\mu$ in $\mathbb{H}$.
\item   \Cref{23-8-4-1509} exists a unique  Carath\'eodory solution 
$$(\varphi_t(\omega,u))_{t\geq0}\in C([0, \infty); \text{Diff}^2(\mathbb{T}^2))$$ with $\varphi_0(\omega,u)= \text{Id}_{\mathbb{T}^2}$ for almost trajectory. And the Markov process $(u_t,\varphi_t)_{t\geq0}$ admits a unique Borel stationary measure $\mu\times\text{vol}$ in $\mathbb{H}\times\mathbb{T}^2$, where $\text{vol}$ is the volume measure on $\mathbb{T}^2$.
\end{enumerate}
\end{proposition}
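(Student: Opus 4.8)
The plan is to treat the two parts separately, since the first is essentially a citation assembly while the second requires building the stochastic flow of diffeomorphisms on top of it and verifying uniqueness of the stationary measure for the coupled Markov process.

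For part (1), I would invoke the standard well-posedness theory for the 2D stochastic Navier--Stokes equation with additive white-in-time forcing on $\mathbb{T}^2$: for $s \geq 4$ and initial data $u \in \mathbb{H}$, a pathwise unique mild solution $(u_t)_{t\geq 0}$ with continuous trajectories in $\mathbb{H}$ exists, by the usual fixed-point/energy-estimate argument combined with the fact that $\mathbf{H}^s$ for $s \geq 4$ is a multiplicative algebra controlling the nonlinearity $(u\cdot\nabla)u$. The Markov property of $(u_t)_{t\geq 0}$ follows from pathwise uniqueness and the independent-increments structure of the noise in \Cref{23-8-28-1017}. Existence of a stationary measure $\mu$ follows from a Krylov--Bogolyubov argument using the a priori moment bounds in $\mathbb{H}$, and uniqueness (ergodicity) follows from the low-mode non-degeneracy assumption (A1) together with the standard Hairer--Mattingly asymptotic strong Feller / Malliavin calculus machinery; the key point is that (A1) forces the noise to hit the determining low modes, so all solutions synchronize in law. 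I would simply cite \cite{MR4404792} (and the references therein) for all of this rather than reproduce it.

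For part (2), fix a trajectory $\omega$ in the full-measure set on which part (1) holds, so $t \mapsto u_t(\omega, u)$ is a continuous curve in $\mathbb{H} \subset \mathbf{H}^s(\mathbb{T}^2;\mathbb{R}^2)$ with $s \geq 4$; by Sobolev embedding this is a continuous curve of divergence-free $C^2$ (indeed $C^{s-1}$) vector fields on the compact manifold $\mathbb{T}^2$. The non-autonomous ODE \Cref{23-8-4-1509} is then a classical object: for each $x$, the right-hand side $u_t(\varphi)$ is continuous in $t$ and Lipschitz (in fact $C^2$) in the spatial variable uniformly on compact time intervals, so the Carath\'eodory/Picard--Lindel\"of theorem gives a unique solution $\varphi_t(\omega,u)x$, global in time because $\mathbb{T}^2$ is compact. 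Smooth dependence on the initial condition (differentiating the flow equation in $x$ and solving the resulting linear variational equation) upgrades $\varphi_t$ to a $C^2$ diffeomorphism, and volume-preservation $\varphi_t \in \mathrm{Diff}^2_{\mathrm{vol}}(\mathbb{T}^2)$ comes from Liouville's formula together with $\mathrm{div}(u_t) = 0$. Joint measurability in $(\omega, u, t, x)$ and continuity in $t$ of $\varphi_t$ as a $\mathrm{Diff}^2$-valued map follow from continuous dependence of ODE solutions on parameters and the continuity established in part (1).

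It remains to identify the stationary measure of the skew-product Markov process $(u_t, \varphi_t)_{t\geq 0}$ on $\mathbb{H} \times \mathbb{T}^2$. That $\mu \times \mathrm{vol}$ is stationary is immediate: $\mu$ is stationary for the base, and for each fixed realization $\varphi_t(\omega,\cdot)$ preserves $\mathrm{vol}$ by the previous paragraph, so the fibered transition kernel preserves $\mathrm{vol}$ on fibers and hence $\mu \times \mathrm{vol}$ is invariant. Uniqueness is the substantive point: I would argue that any stationary measure $\nu$ on $\mathbb{H}\times\mathbb{T}^2$ must disintegrate over $\mu$ (by uniqueness from part (1)), so $\nu = \int \nu_u \, \mu(\mathrm{d}u)$; then one uses that the Lagrangian flow mixes within fibers — concretely, the derivative cocycle has a positive top Lyapunov exponent by \cite[Theorem 1.6]{MR4404792}, and more to the point the process $(u_t,\varphi_t x)$ is itself known to be uniquely ergodic with marginal $\mathrm{vol}$ on the second factor in the relevant references, so $\nu_u = \mathrm{vol}$ for $\mu$-a.e.\ $u$. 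I expect this uniqueness-of-the-fibered-stationary-measure step to be the main obstacle to a fully self-contained proof; the cleanest route is again to cite the companion/prior works (\cite{MR4404792,bedrossian2021chaos}) where the uniquely ergodic structure of $(u_t,\varphi_t)$ is established, since it rests on hypoellipticity of the two-point or projective process driven by the non-degeneracy assumptions (A1)--(A2). Everything else in part (2) is routine deterministic ODE theory applied pathwise.
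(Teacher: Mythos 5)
Your proposal is correct and follows essentially the same route as the paper, which likewise treats this proposition as a citation assembly: well-posedness of \eqref{23-7-4-1717} from \cite{MR1417491} or \cite{MR3443633}, unique ergodicity of the velocity process by adapting \cite{MR1346374,MR2770903}, pathwise ODE theory via \cite{A} for the Lagrangian flow, and unique ergodicity of the coupled process $(u_t,\varphi_t)$ from \cite[Section 7]{MR4404792}. You correctly identify the fibered unique-ergodicity step as the only substantive point and defer it to the same reference the paper uses.
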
 
 The existence and uniqueness of  solution of \Cref{23-7-4-1717}  follows from \cite[Chapter 15]{MR1417491} or \cite[Chapter 2]{MR3443633}; the unique ergodicity can be obtained by adjusting  the arguments in   \cite{MR1346374, MR2770903}.  The existence of uniqueness of solution of \Cref{23-8-4-1509} follows from the regularity of $u_t$ and \cite[Chapter 2]{A}; the unique ergodicity was proved in \cite[Section 7]{MR4404792}.

\subsection{Main result and discussions}
The existence of full-horseshoes for Lagrangian flows can be obtained  by checking  conditions  in  \cite[Proposition 5.6]{axXiv: 230305027}.  But we  give more precise argument for  the Lagrangian flow to determine the location of the full-horseshoes in this paper. Particularly, we give an abstract result to ensure the existence of observable full-horseshoes (see \Cref{23-8-19-2012}).  Now,  we state  the main result of this paper as follows. 
 \begin{thm}
 \label{23-7-4-2042}
 The Lagrangian flow $(\varphi_t)_{t\geq0}$ defined by \Cref{23-8-4-1509} has full-horseshoes on any two  disjoint non-empty closed balls of $\mathbb{T}^2$. Namely,  given any two disjoint non-empty closed balls $\{U_1, U_2\}$ of $\mathbb{T}^2$ , for $\mathbb{P}\times\mu$-a.s. $(\omega, u)\in\Omega\times\mathbb{H}$, there  exists a  subset $J(\omega,u)$ of $\mathbb{Z}_+:=\mathbb{N}\cup\{0\}$ such that 
\begin{enumerate}[(a)]
\item   $\lim_{n\to+\infty}\frac{|J(\omega, u)\cap\{0,1,\dots, n-1\}|}{n}>0$;
\item  for any $s\in\{1,2\}^{J(\omega, u)}$, there exists  an $x_s\in \mathbb{T}^2$ such that $\varphi_j(\omega,u)x_s\in U_{s(j)}$ for each $j\in J(\omega,u)$.
\end{enumerate}
 \end{thm}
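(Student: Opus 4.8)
The plan is to deduce \Cref{23-7-4-2042} from the abstract criterion \Cref{23-8-19-2012} by checking that the random Lagrangian flow $(\varphi_t)_{t\geq0}$, together with its stationary measure $\mu\times\text{vol}$, meets the hypotheses of that criterion relative to an \emph{arbitrary} pair of disjoint balls. So the argument splits into three tasks: (i) show that $\mu\times\text{vol}$ is a hyperbolic SRB measure for the skew product $(u_t,\varphi_t)$; (ii) show that the system has the ergodic/mixing ($K$-type) property used by \Cref{23-8-19-2012}; and (iii) verify the (here automatic) topological condition that both $U_1$ and $U_2$ are charged by the SRB measure on the fibre, namely $\text{vol}(U_i)>0$.

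\textbf{Hyperbolicity and the SRB property.} By \cite[Theorem 1.6]{MR4404792}, under (A1)--(A2) the top fibrewise Lyapunov exponent $\lambda_1$ of $(\varphi_t)$ with respect to $\mu\times\text{vol}$ is strictly positive. Since $u_t$ is divergence free, Liouville's formula gives $\det D\varphi_t(\omega,u)\equiv1$, so each $\varphi_t(\omega,u)$ preserves $\text{vol}$ and the two exponents satisfy $\lambda_1+\lambda_2=0$; hence $\lambda_2=-\lambda_1<0$ and $\mu\times\text{vol}$ is hyperbolic. Its disintegration along the $\mathbb{T}^2$-fibres is $\text{vol}$, which is smooth, so for a volume-preserving hyperbolic system the conditionals on unstable manifolds are absolutely continuous, i.e. $\mu\times\text{vol}$ is SRB. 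I would record as a preliminary lemma the Pesin-theoretic consequences in the random framework of \cite{A}: a full-measure set of Lyapunov-regular points carrying local unstable curves of length bounded below, along which $\varphi_t$ is uniformly expanding on the relevant time scale, with the hyperbolicity blocks and Lyapunov charts varying tempered-ly along the base.

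\textbf{Ergodicity, mixing, and the location of the horseshoe.} Part (2) of \Cref{23-9-2-2037} gives unique stationarity of $\mu\times\text{vol}$, and combined with the mixing estimates for the two-point motion in \cite{MR4404792,MR4372219} this yields the $K$-type property (triviality of the tail $\sigma$-algebra) that decouples distinct return times and thus allows arbitrary itineraries $s\in\{1,2\}^{J(\omega,u)}$ to be realized. With (i)--(iii) in hand I invoke \Cref{23-8-19-2012}: because $\text{vol}(U_1),\text{vol}(U_2)>0$, it produces for $\mathbb{P}\times\mu$-a.e. $(\omega,u)$ a set $J(\omega,u)\subseteq\mathbb{Z}_+$ of times at which a suitable admissible local unstable curve stretches across both $U_1$ and $U_2$ with uniform transversality; a Cantor-type nested intersection along $J(\omega,u)$ then supplies, for each $s$, the point $x_s$ of (b). The positive lower density in (a) follows from Birkhoff's ergodic theorem for the skew product over the ergodic base applied to the indicator of the positive-measure set of ``good configurations'' that force such a crossing, so $J(\omega,u)$ has frequency at least that measure.

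\textbf{Main obstacle.} The bookkeeping above is routine once \Cref{23-8-19-2012} is available; the genuine difficulty is proving, in the truly infinite-dimensional setting $\Omega\times\mathbb{H}$, that local unstable curves grow and equidistribute quickly enough that at a \emph{positive-density} set of times a single admissible curve simultaneously crosses \emph{both} prescribed balls with uniform constants — this is exactly what upgrades ``a horseshoe somewhere'' to ``a horseshoe on any two disjoint balls.'' Doing so requires combining the quantitative expansion and the projective (Furstenberg-type) non-degeneracy of \cite{MR4404792} with the exponential mixing of the base, while controlling the random sets of weak hyperbolicity along orbits; I expect this curve-growth-and-spreading estimate, and the uniformity of its constants, to be the main technical burden of the proof.
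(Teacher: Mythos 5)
Your overall scaffolding (check hyperbolic + SRB + ergodicity, then invoke \Cref{23-8-19-2012}) points in the right direction, and your hyperbolicity argument ($\lambda_1>0$ from \cite[Theorem 1.6]{MR4404792} plus $\lambda_1+\lambda_2=0$ by volume preservation) matches the paper's. But there are two genuine gaps. First, your route to the $K$-property is wrong in kind: ``mixing estimates for the two-point motion'' control the tail $\sigma$-algebra of a Markov process, not the Pinsker $\sigma$-algebra of the skew product, and the paper never argues this way. Its $K$-property is \Cref{23-6-27-1553}: for a hyperbolic SRB measure the Pinsker partition is coarser than $\eta_s\wedge\eta_u$ (the join of partitions subordinated to stable and unstable manifolds), whose atoms have positive conditional measure by absolute continuity of the stable foliation; hence the Pinsker partition is finite and cyclic, and \emph{total ergodicity} --- itself a separate verification via unique stationarity of $\mu\times\mathrm{vol}$ for every subsampled semigroup $(\mathcal{P}_{mn})_n$ (\Cref{23-8-15-1431}) --- forces it to be trivial. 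You also omit the integrability condition \eqref{23-7-5-2042} (an entire subsection, \Cref{23-8-9-2100}) and the passage to the invertible extension $(\hat\varphi,\hat\mu)$ over the natural extension of the one-sided Wiener shift, without which none of the two-sided Pesin theory applies.

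Second, your condition (iii) conflates the fibre marginal $\mathrm{vol}$ with the sample measures. The criterion that actually localizes the horseshoe (Lemma \ref{23-7-8-2238}) is positivity of $\hat\mu\times_{\mathcal{P}_{\hat\mu}}\hat\mu\bigl((\hat\Omega\times U_1)\times(\hat\Omega\times U_2)\bigr)$, which after the $K$-property equals $\int_{\hat\Omega}\hat\mu_{\hat\omega}(U_1)\hat\mu_{\hat\omega}(U_2)\,\mathrm{d}\hat{\mathbb{P}}$; the measures $\hat\mu_{\hat\omega}$ are the past-conditioned sample measures of the invertible extension, typically singular with respect to $\mathrm{vol}$, so $\mathrm{vol}(U_i)>0$ is not enough --- one needs Claim \ref{23-9-3-1320} that $\operatorname{supp}(\hat\mu_{\hat\omega})=\mathbb{T}^2$ almost surely, proved by averaging a random open set against $\hat{\mathbb{P}}$ and using that the marginal is $\mathrm{vol}$. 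Relatedly, the ``main obstacle'' you flag (quantitative unstable-curve growth and spreading across both balls with uniform constants) is not part of the paper's mechanism at all: the full-horseshoe is extracted by the entropy-theoretic joining argument of \cite{axXiv: 230305027}, not by a geometric Cantor construction, so that estimate is neither needed nor attempted.
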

 
 %\begin{rem}
%Bedrossian et.al in \cite{MR4385127} obtained the   fiber exponential mixing property of Lagrangian flow, but this property can't  directly derivative the observable full-horseshoes. 
 %\end{rem}
\begin{rem}
In fact, the Lagrangian flow has observable full-horseshoes of  any discrete time form and the hitting time has uniformly positive lower bound. Particularly,   for any  $t>0$ and any two disjoint non-empty closed balls $\{U_1, U_2\}$ of $\mathbb{T}^2$,  there exists positive constants  $b$  such that   for $\mathbb{P}\times\mu$-a.s. $(\omega, u)\in\Omega\times\mathbb{H}$, there  is a  subset $J(\omega,u)$ of $\mathbb{Z}_+$ such that 
\begin{enumerate}[(a)]
\item   $\lim_{n\to+\infty}\frac{|J(\omega, u)\cap\{0,1,\dots, n-1\}|}{n}>b$;
\item  for any $s\in\{1,2\}^{J(\omega, u)}$, there exists  an $x_s\in \mathbb{T}^2$ such that $\varphi_{tj}(\omega,u)x_s\in U_{s(j)}$ for each $j\in J(\omega,u)$.
\end{enumerate}

\end{rem}

Recently,  the  phenomenon of observable full-horseshoe has been obtained  in \cite{arXiv:2304.03685}  for a kind of  one-dimensional expanding  random dynamical system.  However, the  method in \cite{arXiv:2304.03685}  looks like to be invalid  for high-dimensional dynamical system.   In this paper, we adopt   totally different method, which is based on the hyperbolic property of stationary measure and authors' recent work \cite{axXiv: 230305027}.  We believe our method can relax the condition of stationary measure to ensure the existence of observable full-horseshoes.

This paper is organized as following: In \Cref{22-12-05-01}, we mainly review the basic knowledge of  entropy,  Pinsker $\sigma$-algebra,  and $K$-system for random dynamical system.  In \Cref{23-8-13-2002},  borrowing the invariant manifold theory in  smooth random dynamical systems,   we  give a sufficient condition of a system  to guarantee that   it is a $K$-system.   In \Cref{23-8-22-1709},  we mainly verify that  Lagrangian flow is a $K$-system  and use the result in \cite{axXiv: 230305027} to complete the proof of   \Cref{23-7-4-2042}.

 \subsection{Acknowledgments}
The authors were supported by NSFC of China (12090012, 12090010, 12031019,11731003). The second author would like to thanks 
College of Mathematics  of Sichuan University for their warm hospitality,  as the idea of this part work was  formulated when he was visiting  here.

 \section{Entropy, Pinsker  $\sigma$-algebra and $K$-system for RDS}
\label{22-12-05-01}

%It is obvious that $\mu$ is also a stationary measure  of the transition probability $\mathcal{P}(x,\cdot)$ for $x\in\mathbb{R}^d$, are  defined by
%$$\mathcal{P}(x, A)=\mathbb{P}(\{\omega\in\Omega, f(\omega)x\in A\})\quad\text{where } A\in\mathscr{B}_{\mathbb{R}^d}.$$
%Note that $f(n,\Pi(\bar{\omega}))=\varphi_n(\bar{\omega})$  and  for $\bar{\mathbb{P}}$-a.s. $\bar{\omega}\in\bar{\Omega}$  $\eqref{22-10-30-02}$.

 In this section, we review some basic concepts and classical results  about  measurable partition, entropy,   Pinsker $\sigma$-algebra and $K$-system for random dynamical system. The reader can see \cite{A, EW,G,Wal} for details.  
 %For sake of convenience, we assume that $(X,\mathscr{X},\mu)$ is a Polish probability space through this section.

\subsection{Measurable partition  and  RDS}
\label{22-07-06-01}
In this section,  we always assume that triple  $(X, \mathscr{B}_X,\mu)$  is  a \emph{Polish probability space}, which means that $X$ is a Polish space, $\mathscr{B}_X$ is the Borel $\sigma$-algebra of $X$,  $\mu$ is a  probability measure on  $(X, \mathscr{B}_X)$.

\begin{definition}[Measurable partition]
A partition $\alpha$ of $(X, \mathscr{B}_X,\mu)$ is called the  \emph{measurable partition} if there exists a family of countable measurable subsets $\{A_i\}_{i\in\mathbb{N}}$  satisfying that (1)  for every $i\in\mathbb{N}$ $A_i$ is the union of elements in $\alpha$,  (2) for any two distinct elements $B_1,B_2$ of $\alpha$ 
there exists some $i\in\mathbb{N}$ such that either $B_1\subset A_i, B_2\not\subset A_i$ or $B_1\not\subset A_i, B_2\subset A_i$. Obviously, the elements in $\alpha$ are measurable.
   \end{definition}
 For any measurable partition $\alpha$ of $(X, \mathscr{B}_X,\mu)$,  denote  $\hat{\alpha}$ as  the $\sigma$-algebra generated by measurable set in $\mathscr{B}_X$ which is  the union  of elements in $\alpha$.  Then  we define the  inclusion relationship of measurable partition through the inclusion relationship of $\sigma$-algebra. i.e.
\begin{enumerate}
\item given any two partitions $\alpha_1$ and $\alpha_2$ of $(X,\mathscr{B}_X,\mu)$,  we say that $\alpha_1$  is finer (coarser) than $\alpha_2$, and   denote it by  $\alpha_2\prec \alpha_1$ ($\alpha_2\succ\alpha_1$)   if  $\sigma$-algebra $\hat{\alpha}_2\subset\hat{\alpha}_1\pmod\mu$ ($\hat{\alpha}_2\supset\hat{\alpha}_1\pmod\mu$). Particularly, denote $\alpha_1=\alpha_2$ if $\alpha_2\succ\alpha_1$ and $\alpha_2\prec\alpha_1$;
\item  letting $\{\alpha_n\}_{n\in\mathbb{N}}$ be a sequence of measurable partitions of $(X,\mathscr{B}_X,\mu)$,  denote $\wedge_{i\in \mathbb{N}}\alpha_i$ ($\vee_{i\in \mathbb{N}}\alpha_i$) as the most the finer (coarser) measurable partition which  is coarser (finer) than  all $\alpha_i$.
\end{enumerate}
In fact,   sub $\sigma$-algebras of  $\mathscr{B}_X$  are in one-to-one correspondence with the classes of mod-0 equal measurable partitions (see the details in \cite[Chapter 0 \S 2]{MR1369243}). Now, we review   a useful tool  in this paper: measure disintegration. 
\begin{lemma}[{\cite[Proposition 5.19]{EW}}]
\label{23-9-29-1511}
Let $\alpha$  be a measurable partition of Polish probability space $(X,\mathscr{B}_X,\mu)$.  Then,  there is a family of conditional probability measures $\{\mu^{\alpha}_x\}_{x\in X}$ on $(X,\mathscr{B}_X)$ which are characterized by
\begin{itemize}
	\item   there exists a $\mu$-full measure subset $X'$ such that for any $x\in X'$ one has that $\mu^{\alpha}_x(\alpha(x))=1$ and $\mu_{x_1}^{\alpha}=\mu_{x_2}^{\alpha}$, where $\alpha(x)$ is the  unique atom in $\alpha$ which contains $x$, and $x_1, x_2\in \alpha(x)$;
	\item for each $f \in L^1(X,\mathscr{B}_X,\mu)$,  one  has that $f \in L^1(X,\mathscr{B}_X,\mu^{\alpha}_x)$ for $\mu$-a.s. $x\in X$, the map $ x\mapsto
	\int_X f\,\mathrm{d}\mu_x^{\alpha}$ belongs to $L^1(X,\mathscr{B}_X,\mu)$ and $\mu=\int_X\mu_x^{\alpha}\mathrm{d}\mu(x)$ in the sense that
	$$\int_X \left(\int_X f\,\mathrm{d}\mu_x^{\alpha} \right)\, \mathrm{d}\mu(x)=\int_X f \,\mathrm{d}\mu.$$
\end{itemize}
Then  $\mu=\int_X\mu_x^{\alpha}\mathrm{d}\mu(x)$  is called \emph{disintegration} of $\mu$ with respect to $\alpha$.  
\end{lemma}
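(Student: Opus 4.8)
The plan is to prove the disintegration by approximating the measurable partition $\alpha$ by an increasing sequence of finite partitions and invoking the martingale convergence theorem. First I would use the defining property of a measurable partition to fix a countable family $\{A_i\}_{i\in\mathbb{N}}$ of $\hat\alpha$-measurable sets (each a union of atoms) that separates the atoms of $\alpha$, and set $\hat\alpha=\sigma(\{A_i\})$. Letting $\alpha_n$ be the finite partition generated by $A_1,\dots,A_n$, one has $\hat\alpha_n\uparrow\hat\alpha$. For each $n$ the disintegration over the finite partition $\alpha_n$ is elementary: on an atom $P$ of $\alpha_n$ with $\mu(P)>0$ put $\mu_x^{(n)}=\mu(P)^{-1}\mu|_P$ for $x\in P$, so that $\int_X f\,\mathrm{d}\mu_x^{(n)}=\mathbb{E}(f\mid\hat\alpha_n)(x)$ for $\mu$-a.e. $x$. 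Since $X$ is Polish, at the outset I would embed $X$ as a Borel subset of a compact metric space $K$ (for instance the Hilbert cube $[0,1]^{\mathbb{N}}$), so that $C(K)$ is separable; this makes the Riesz representation step below available.

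Next I would promote these finite-partition disintegrations to a limit. Fix a countable $\|\cdot\|_\infty$-dense subset $\mathcal{D}=\{f_k\}_{k\in\mathbb{N}}$ of $C(K)$ containing the constant $1$. For each fixed $k$ the sequence $\mathbb{E}(f_k\mid\hat\alpha_n)$ is a uniformly bounded martingale for the filtration $(\hat\alpha_n)_n$, so by Doob's (upward) martingale convergence theorem it converges $\mu$-a.e. and in $L^1(\mu)$ to $\mathbb{E}(f_k\mid\hat\alpha)$. Applying this countably often, there is a $\mu$-full set $X'$ on which $L_x(f_k):=\lim_n \mathbb{E}(f_k\mid\hat\alpha_n)(x)$ exists for every $k$ and equals $\mathbb{E}(f_k\mid\hat\alpha)(x)$. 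The map $f_k\mapsto L_x(f_k)$ is positive, linear on $\mathcal{D}$, satisfies $L_x(1)=1$, and is $\|\cdot\|_\infty$-contractive, hence extends uniquely to a positive normalized functional on $C(K)$. By the Riesz representation theorem this functional is integration against a unique Borel probability measure $\mu_x^\alpha$ on $K$, and since each $\mu_x^{(n)}$ is carried by $X$ one checks $\mu_x^\alpha(X)=1$, so $\mu_x^\alpha$ lives on $(X,\mathscr{B}_X)$.

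It then remains to verify the two characterizing properties. The identity $\int_X f\,\mathrm{d}\mu_x^\alpha=\mathbb{E}(f\mid\hat\alpha)(x)$ for $f\in\mathcal{D}$ gives, upon integrating in $x$ and using $\int_X \mathbb{E}(f\mid\hat\alpha)\,\mathrm{d}\mu=\int_X f\,\mathrm{d}\mu$, the disintegration formula $\int_X(\int_X f\,\mathrm{d}\mu_x^\alpha)\,\mathrm{d}\mu(x)=\int_X f\,\mathrm{d}\mu$; a monotone-class and dominated-convergence argument extends it from $\mathcal{D}$ to all bounded Borel $f$ and then to $f\in L^1(X,\mathscr{B}_X,\mu)$, which simultaneously yields $f\in L^1(\mu_x^\alpha)$ for $\mu$-a.e. $x$ and the integrability of $x\mapsto\int_X f\,\mathrm{d}\mu_x^\alpha$. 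Because $x\mapsto\mu_x^\alpha(B)$ is $\hat\alpha$-measurable (a pointwise a.e. limit of the $\hat\alpha_n$-measurable functions $\mathbb{E}(\mathbf{1}_B\mid\hat\alpha_n)$), it is $\mu$-a.e. constant on each atom, giving $\mu_{x_1}^\alpha=\mu_{x_2}^\alpha$ for $x_1,x_2\in\alpha(x)$. For the concentration $\mu_x^\alpha(\alpha(x))=1$, apply the formula to the generators $A_i$: since $A_i$ is a union of atoms, $\mu_x^\alpha(A_i)=\mathbf{1}_{A_i}(x)$ for $\mu$-a.e. $x$, and intersecting over the countably many $A_i$, using that $\{A_i\}$ separates atoms so that $\alpha(x)=\bigcap_{x\in A_i}A_i\cap\bigcap_{x\notin A_i}A_i^c$, forces $\mu_x^\alpha$ to be carried by $\alpha(x)$ off a $\mu$-null set.

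I expect the main obstacle to be the concentration property $\mu_x^\alpha(\alpha(x))=1$ rather than the mere existence of the measures: one must choose the countable determining family $\mathcal{D}$ for $C(K)$ and the countable separating family $\{A_i\}$ for the atoms coherently, and control the countable intersection of the full-measure sets on which $\mu_x^\alpha(A_i)=\mathbf{1}_{A_i}(x)$ holds, so that a single $\mu$-conull $X'$ serves for concentration, $\hat\alpha$-measurability, and the disintegration identity at once. Uniqueness is then routine: any two families satisfying $\int_X f\,\mathrm{d}\mu_x^\alpha=\mathbb{E}(f\mid\hat\alpha)(x)$ agree on $\mathcal{D}$ for $\mu$-a.e. $x$, hence coincide as measures for $\mu$-a.e. $x$.
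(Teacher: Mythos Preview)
The paper does not give its own proof of this lemma; it simply states the result and cites \cite[Proposition~5.19]{EW}. Your outline via an increasing sequence of finite partitions, Doob's martingale convergence theorem applied to a countable dense set in $C(K)$ after embedding $X$ into a compact metrizable $K$, and Riesz representation to produce the conditional measures, is correct and is in fact the standard argument (essentially the one in the cited reference). One small point to tighten: the assertion $\mu_x^\alpha(X)=1$ is best obtained not from ``each $\mu_x^{(n)}$ is carried by $X$'' but by first extending the disintegration identity from $\mathcal{D}$ to all bounded Borel $f$ on $K$ via the monotone class theorem, and then applying it to $f=\mathbf{1}_{K\setminus X}$ to get $\int_X \mu_x^\alpha(K\setminus X)\,\mathrm{d}\mu(x)=\mu(K\setminus X)=0$; similarly, the $\hat\alpha$-measurability of $x\mapsto\mu_x^\alpha(B)$ follows from the identity $\mu_x^\alpha(B)=\mathbb{E}(\mathbf{1}_B\mid\hat\alpha)(x)$ obtained by that same extension, rather than directly as a limit of $\mathbb{E}(\mathbf{1}_B\mid\hat\alpha_n)$.
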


Next,  we recall some basic definitions of measure-preserving dynamical systems.  For sake of convenience,   denote  $\mathbf{T}$ as $\mathbb{R},\mathbb{R}_+:=\{t\in\mathbb{R}: t\geq0\},\mathbb{Z}$ or $\mathbb{Z}_+:=\{t\in\mathbb{Z}: t\geq0\}$. 
 \begin{definition}
A \emph{measurable dynamical system} $(X,\mathscr{B}_X, (T_t)_{t\in\mathbf{T}})$ is said  that  the mapping 
$$ T: \mathbf{T}\times X\to X, \quad (t,x)\mapsto  T_tx$$
is   $\mathscr{B}_{\mathbf{T}}\otimes\mathscr{B}_X/\mathscr{B}_X$-measurable satisfying that  $T_0=\text{id}_X$ and  $T_t\circ T_s= T_{t+s}$ for any $s,t\in\mathbf{T}$.   If $\mathbf{T}=\mathbb{R}$ or $\mathbb{Z}$,   then $(X,\mathscr{B}_X, T)$ is called an invertible measurable dynamical system.  
\end{definition} 

\begin{definition}
Let  $(X,\mathscr{B}_X, (T_t)_{t\in\mathbf{T}})$ be a measurable dynamical system and  $(X,\mathscr{B}_X,\mu)$ be a Polish probability space.
\begin{enumerate}[(a)]
\item  the measure $\mu$  is called a $(T_t)_{t\in\mathbf{T}}$-\emph{invariant measure} if  $(T_t)_*\mu=\mu$  for every $t\in \mathbf{T}$;
\item  the measure $\mu$  is called a $(T_t)_{t\in\mathbf{T}}$-\emph{ergodic  measure}  if  (1) $\mu$  is a  $(T_t)_{t\in\mathbf{T}}$-invariant measure;  (2) any  measurable subset $A$ with $\mu(A\Delta (T_t)^{-1}A)=0$ for any $t\in\mathbf{T}$  has $\mu$-full measure or $\mu$-null measure.  
\end{enumerate}
The quadruple  $(X,\mathscr{B}_X,\mu, (T_t)_{t\in\mathbf{T}})$ is refer to  a  \emph{measure-preserving dynamical system} or \emph{metric system}. Sometimes, we will  write $(X,\mathscr{B}_X,\mu,  (T_t)_{t\in\mathbf{T}})$ short as $(X,\mu,  (T_t)_{t\in\mathbf{T}})$.  
  \end{definition}

A random dynamical system  (being abbreviated as RDS) $\varphi$ on a Polish space $M$ over a metric system  $(\Omega,\mathscr{F},\mathbb{P},(\theta_t)_{t\in\mathbf{T}})$  means that 
$$\varphi: \mathbf{T}\times\Omega\times M\to M, \quad (n, \omega,x)\mapsto \varphi_n(\omega)x$$
is a  mapping  satisfying  that 
\begin{enumerate}[(i)]
\item $\varphi$ is $\mathscr{B}_{\mathbf{T}}\otimes\mathscr{F}\otimes\mathscr{B}_M/\mathscr{B}_M$-measurable;
\item   for $\mathbb{P}$-a.s. $\omega\in\Omega$,  one has that $\varphi_0(\omega)=\text{id}_M$  and $\varphi_{n+m}(\omega)= \varphi_n(\theta_m\omega)\circ \varphi_m(\omega)$  for any $n,m\in\mathbf{T}$.
%\item   $\varphi^1_{\omega}$ is a $C^2$ diffeomorphism on $M$
\end{enumerate}
The RDS $\varphi$ induces a  measurable  dynamical system $(\Omega\times M,\mathscr{F}\otimes\mathscr{B}_M, (\Phi_t)_{t\in\mathbf{T}})$ given by 
\begin{align}
\label{23-8-15-1253}
\Phi: \mathbf{T}\times\Omega\times M\to \Omega\times M, \quad (t, \omega,x)\mapsto(\theta_t\omega, \varphi_t(\omega)x).
\end{align}
 If  $\mathbf{T}$ is  $\mathbb{R}_+$ or $\mathbb{Z}_+$ ($\mathbb{R}$ or $\mathbb{Z}$), the RDS $\varphi$ is called a \emph{one-side} (\emph{two-side}) RDS. In the case of  $\mathbf{T}=\mathbb{Z}$ or $\mathbb{R}$, note that the induced measurable dynamical system $(\Omega\times M,\mathscr{F}\otimes\mathscr{B}_M, (\Phi_t)_{t\in\mathbf{T}})$ is invertible. 
\begin{definition}
Let $\varphi$ be a measurable RDS on a Polish space $M$ over a metric system $(\Omega,\mathscr{F},\mathbb{P}, (\theta_t)_{t\in\mathbf{T}})$.  A probability measure $\mu$ on $(\Omega\times M,\mathscr{F}\otimes\mathscr{B}_M)$ is an invariant (ergodic) measure of the RDS $\varphi$ if  $\mu$ is   $(\Phi_t)_{t\in\mathbf{T}}$-invariant (ergodic) and   the marginal measure on $\Omega$ is the measure $\mathbb{P}$.  The pair $(\varphi,\mu)$ is refer to a  \emph{measure-preserving} RDS on $M$ over $(\Omega,\mathscr{F},\mathbb{P},(\theta_t)_{t\in\mathbf{T}})$. 
\end{definition}

In the end of this subsection,  we recall  measure disintegration of RDS, which can be regard as a special version of  \Cref{23-9-29-1511}.
\begin{lemma}[{\cite[Lemma 1.4.3]{A}}]
\label{23-8-14-1453}
Let $(\varphi,\mu)$ be a measure-preserving RDS on a Polish space  $M$ over a metric system $(\Omega,\mathscr{F},\mathbb{P}, (\theta_t)_{t\in\mathbf{T}})$.   The disintegration of $\mu$ with respect to measurable partition $\varpi=\{\omega\times M: \omega\in\Omega\}$  can be expressed as 
\begin{align}
\label{23-8-30-2027}
 \mu=\int_{\Omega}\delta_{\omega}\times\mu_{\omega}\mathrm{d}\mathbb{P}(\omega),
\end{align}
where $\delta_{\omega}$ is the Dirac's measure on $(\Omega,\mathscr{F})$ at $\omega$, and $\{\mu_{\omega}\}_{\omega\in \Omega}$ is  a family of probability measures  on $(M,\mathscr{B}_M)$. 
%\end{enumerate}
% such that  \begin{enumerate}[(a)]
%\item such that $\omega\mapsto \mu_{\omega}(A)$ is $\mathscr{B}_M/\mathscr{B}_{\mathbb{R}}$-measurable for any Borel measurable set $A$;
%\item for all $B\in\mathscr{F}\otimes\mathscr{B}_{M}$, $\mu(B)=\int_{\Omega}\int_X1_B(\omega,x)\mathrm{d}\mu_{\omega}(x)\mathrm{d}\mathbb{P}(\omega).$
\end{lemma}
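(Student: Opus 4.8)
The plan is to deduce this special case directly from the general disintegration result \Cref{23-9-29-1511}, applied to the partition $\varpi=\{\{\omega\}\times M:\omega\in\Omega\}$ of the Polish probability space $(\Omega\times M,\mathscr{F}\otimes\mathscr{B}_M,\mu)$. First I would check that $\varpi$ is genuinely a measurable partition. Since $\Omega$ is Polish, its Borel $\sigma$-algebra $\mathscr{F}$ is countably generated; fixing a countable generating family $\{B_i\}_{i\in\mathbb{N}}\subset\mathscr{F}$ and setting $A_i:=B_i\times M$, each $A_i$ is a union of atoms of $\varpi$, and any two distinct atoms $\{\omega_1\}\times M$ and $\{\omega_2\}\times M$ are separated by some $A_i$ because the $B_i$ separate $\omega_1$ from $\omega_2$. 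Hence $\varpi$ meets the definition of a measurable partition, and the atom $\varpi(\omega,x)$ containing $(\omega,x)$ is exactly $\{\omega\}\times M$.

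Then \Cref{23-9-29-1511} furnishes a family of conditional probability measures $\{\mu^{\varpi}_{(\omega,x)}\}$ on $(\Omega\times M,\mathscr{F}\otimes\mathscr{B}_M)$ together with a $\mu$-full measure set on which $\mu^{\varpi}_{(\omega,x)}(\varpi(\omega,x))=1$ and $\mu^{\varpi}$ is constant on each atom. The first property says that $\mu^{\varpi}_{(\omega,x)}$ is carried by the fibre $\{\omega\}\times M$, so its $\Omega$-marginal is the point mass $\delta_\omega$ and it factors as $\mu^{\varpi}_{(\omega,x)}=\delta_\omega\times\nu_{(\omega,x)}$ for some probability measure $\nu_{(\omega,x)}$ on $(M,\mathscr{B}_M)$. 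The second property (constancy on atoms) applied to $(\omega,x_1),(\omega,x_2)\in\{\omega\}\times M$ gives $\delta_\omega\times\nu_{(\omega,x_1)}=\delta_\omega\times\nu_{(\omega,x_2)}$, forcing $\nu_{(\omega,x)}$ to be independent of the second coordinate; I denote this common value by $\mu_\omega$. Thus $\mu^{\varpi}_{(\omega,x)}=\delta_\omega\times\mu_\omega$ for $\mu$-a.e.\ $(\omega,x)$.

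Finally I would substitute this factorization into the disintegration identity $\mu=\int_{\Omega\times M}\mu^{\varpi}_{(\omega,x)}\,\mathrm{d}\mu(\omega,x)$ supplied by \Cref{23-9-29-1511}. Because the integrand $\delta_\omega\times\mu_\omega$ depends on $(\omega,x)$ only through $\omega$, and because the hypothesis that $(\varphi,\mu)$ is a measure-preserving RDS forces the $\Omega$-marginal of $\mu$ to equal $\mathbb{P}$, the integral over $\mu$ collapses to an integral over $\mathbb{P}$, yielding
\begin{align*}
\mu=\int_{\Omega\times M}\delta_\omega\times\mu_\omega\,\mathrm{d}\mu(\omega,x)=\int_{\Omega}\delta_\omega\times\mu_\omega\,\mathrm{d}\mathbb{P}(\omega),
\end{align*}
which is the asserted formula. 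The only genuinely delicate point is the passage from the $\mu$-integral to the $\mathbb{P}$-integral: one tests the identity against functions of the form $\mathbf{1}_{B}(\omega)g(x)$, observes that $\int_M g\,\mathrm{d}\mu_\omega$ depends only on $\omega$, and invokes the marginal condition $\int_{\Omega\times M}\mathbf{1}_B(\omega)\,\mathrm{d}\mu=\mathbb{P}(B)$; a monotone-class argument then extends the identity to all bounded measurable integrands and certifies $\{\mu_\omega\}_{\omega\in\Omega}$ as a legitimate disintegration of $\mu$ over $(\Omega,\mathscr{F},\mathbb{P})$.
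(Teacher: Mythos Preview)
The paper does not supply its own proof of this lemma: it is quoted directly from Arnold's book (the header reads \texttt{[Lemma 1.4.3]\{A\}}), and no argument follows the statement. So there is nothing in the paper to compare against line by line.

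Your proposal is correct and is in fact the natural way to recover this statement from the machinery already assembled in the paper. You verify that $\varpi$ is a measurable partition using a countable generating family for $\mathscr{F}$ (legitimate because the ambient metric system is assumed Polish in Section~\ref{22-07-06-01}), invoke \Cref{23-9-29-1511} to obtain conditional measures $\mu^{\varpi}_{(\omega,x)}$ concentrated on the fibres $\{\omega\}\times M$, read off the factorization $\delta_\omega\times\mu_\omega$ from the concentration and atom-constancy properties, and then collapse the $\mu$-integral to a $\mathbb{P}$-integral using the marginal condition built into the definition of an invariant measure for an RDS. Each step is sound; the monotone-class closure at the end is exactly what is needed to pass from cylinder functions $\mathbf{1}_B(\omega)g(x)$ to general integrands. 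In short, your argument fills in what the paper outsources to the reference, and does so using only \Cref{23-9-29-1511} and the definitions already on the page.
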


\iffalse
\begin{proposition}

	\end{proposition}
\fi

	%The conditional  expectations and the conditional measures are related by
%\begin{equation} \label{meas3} \mathbb{E}(f|\pi^{-1}\mathcal{X})(x)=\int_X f\,d\mu_{\pi(x)} \ \ \text{for $\mu$-a.s. } x\in X
%\end{equation} for every  $f\in\mathcal{L}^1(X,\mathcal{X},\mu)$.

% Following lemma  is the well-known  ergodic decomposition theorem (see \cite[Theorem 6.2]{EW}).
%\begin{lemma}
%Let $(X,\mathscr{X},\mu, T)$ be a measure-preserving dynamical system on standard probability space. Then there is a standard probability space $(Y,\mathscr{Y},\nu)$ and a family of probability measures $\{\mu_y\}_{y\in Y}$ on $(X,\mathscr{X})$ with
%\begin{itemize}
%\item for $\nu$-a.s. $y\in Y$, $\mu_y$ is  $T$-invariant ergodic;
%\item $\mu=\int_Y\mu_yd\nu(y)$.
%\end{itemize}
%$\mu=\int_Y\mu_yd\nu(y)$ is called the ergodic decomposition of $(X,\mathscr{X},\mu, T)$ for $(Y,\mathscr{Y},\nu)$.
%\end{lemma}

\subsection{Entropy and  $K$-system for RDS}

 In the subsection,  we will give the definitions of entropy and  $K$-system for RDS.  Assume that $(\varphi,\mu)$ is a  measure-preserving RDS on a Polish space $M$ over a metric dynamical system $(\Omega,\mathscr{F},\mathbb{P},(\theta_t)_{t\in\mathbf{T}})$ through this subsection,  where $\mathbb{T}=\mathbb{Z}$ or $\mathbb{Z}_+$.  Firstly, recall that 
  \begin{itemize}
  \item $\varpi=\{\omega\times M: \omega\in\Omega\}$ is the measurable  partition of $(\Omega\times M, \mathscr{F}\otimes\mathscr{B}_M, )$;
  \item $\mu=\int_{\Omega}\delta_{\omega}\times\mu_{\omega}\mathrm{d}\mathbb{P}(\omega)$ is the disintegration of $\mu$ with respect to $\varpi$ which is defined as \eqref{23-8-30-2027}; 
  \item  $(\Phi_{t})_{t\in\mathbf{T}}$ is measurable map on $(\Omega\times M, \mathscr{F}\otimes\mathscr{B}_M)$ which is defined as \eqref{23-8-15-1253}.
  \end{itemize}
\begin{definition}
For any finite measurable partition $\alpha$ of $(\Omega\times M,\mathscr{F}\otimes\mathscr{B}_M)$,  its entropy with respect to the RDS $(\varphi,\mu)$ is defined as 
$$h_{\mu}(\varphi, \alpha):=\int_{\Omega}\lim_{n\to+\infty}\frac{1}{n}H_{\delta_{\omega}\times\mu_{\omega}}(\bigvee_{i=0}^{n-1}(\Phi_{i})^{-1}\alpha)\mathrm{d}\mathbb{P}(\omega),$$
where $H_{\nu}(\alpha):=-\sum_{A\in\alpha}\nu(A)\log\nu(A)$ with  a probability measure $\nu$ on $(\Omega\times M,\mathscr{F}\otimes\mathscr{B}_M)$.  The measure-theoretic entropy of the measure-preserving RDS  $(\varphi,\mu)$ is defined as 
\begin{align}
\label{23-2-12-22-16} 
h_{\mu}(\varphi):=\sup_{\alpha}h_{\mu}(\varphi, \alpha),
\end{align}
where  $\alpha$ is taken all over finite Borel measurable partitions of $\Omega\times M$.
\end{definition}

  % The  entropy  of $(X,\mathscr{B}_X,\mu,T)$  relative to the measurable partition $\alpha$ is defined as 
 %\begin{align}
% \label{23-2-12-22-16}
 %h_{\mu}(T|\alpha)=\sup_{\eta}h_{\mu}(T,\eta|\alpha),
% \end{align}
\begin{rem}
For a measure-preserving dynamical system $(X,\mu, (T_t)_{t\in\mathbf{T}})$ on the Polish probability space, its  $(T_t)_{t\in\mathbf{T}}$-invariant $\sigma$-algebra is one to one the   $(T_t)_{t\in\mathbf{T}}$-invariant measurable partition (for example,  see \cite[Theorem 6.5]{EW}).  Therefore,  the definition of entropy in  \eqref{23-2-12-22-16} is coincide with the classical definition of  relative entropy (see the detail discussions in \cite[Chapter 0]{MR1369243}).  
\end{rem}

 The   Pinsker $\sigma$-algebra $\mathscr{P}_\mu(\varphi)$    of measure-preserving RDS $(\varphi,\mu)$  is defined as the smallest $\sigma$-algebra containing
 $$\big\{ A\in\mathscr{F}\otimes\mathscr{B}_M:  h_\mu(\varphi,\{A,  \Omega\times M\setminus A\})=0\big\},$$
 which  is a $(\Phi_t)_{t\in\mathbf{T}}$-invariant sub-$\sigma$-algebra of $\mathscr{F}\otimes\mathscr{B}_M$ (for example, see \cite[Section 4.10]{Wal} or \cite{MR1786718}). Denote $\mathcal{P}_{\mu}$   as a  $(\Phi_t)_{t\in\mathbf{T}}$-invariant unique measurable partition   determined  by the  Pinsker $\sigma$-algebra $\mathscr{P}_\mu(\varphi)$, which is called \emph{Pinsker partition} of the measure-preserving RDS $(\varphi,\mu)$.  
 
  \begin{definition}[$K$-system]
A measure-preserving RDS  $(\varphi,\mu)$ is called a  $K$-system  if the  Pinsker partition $\mathcal{P}_{\mu}$  of  $(\varphi, \mu)$  is trivial. That is  $\mathcal{P}_{\mu}=\varpi$. 
  \end{definition}
 
 Finally,  we review a proposition which states a relationship between  Pinsker partition and general partition for RDS.  The reader can see a general form in \cite[Lemma 3.6]{MR2197376}. 
   
 \begin{proposition}
 \label{23-6-29-1659}
Assume that $(\varphi,\mu)$ is a two-side measure-preserving RDS on a Polish space over a metric system $(\Omega,\mathscr{F},\mathbb{P},(\theta_t)_{t\in\mathbf{T}})$. Then for any measurable partition $\eta$  of  $(\Omega\times M, \mathscr{F}\otimes\mathscr{B}_M)$ with 
$$\varpi\prec \eta\prec \Phi_1\eta, \quad \bigvee_{i\in\mathbb{N}}\Phi_{i}\eta=\varepsilon_{\Omega\times M}$$
implies that $\bigwedge_{i\in\mathbb{N}}\Phi_{-i}\eta\succ \mathcal{P}_{\mu}$, where $\varepsilon_{\Omega\times M}$ is the extreme measurable partition, i.e. $\varepsilon_{\Omega\times M}=\{(\omega,x)\in \Omega\times M\}$.

  \end{proposition}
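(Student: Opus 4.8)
The plan is to recognize \Cref{23-6-29-1659} as the random (fibrewise) version of the Rokhlin--Sinai theorem and to reduce it entirely to the single inclusion $\mathscr{P}_\mu(\varphi)\subseteq\hat\eta$. Writing $T=\Phi_1$ (an invertible $\mu$-preserving transformation of $\Omega\times M$, since the RDS is two-sided) and, for a sub-$\sigma$-algebra $\mathscr{C}$, $\Phi_t\mathscr{C}=\{\Phi_tB:B\in\mathscr{C}\}$, the hypotheses become $\hat\varpi\subseteq\hat\eta$, $T^{-1}\hat\eta\subseteq\hat\eta$, and $\bigvee_{i\in\mathbb{N}}T^i\hat\eta=\mathscr{F}\otimes\mathscr{B}_M\pmod\mu$; since $T^{-1}\hat\eta\subseteq\hat\eta$ the algebras $(T^{-i}\hat\eta)_{i\ge0}$ decrease, so $\widehat{\bigwedge_{i\in\mathbb{N}}\Phi_{-i}\eta}=\bigcap_{i\ge1}T^{-i}\hat\eta$. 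Because $\mathscr{P}_\mu(\varphi)$ is $(\Phi_t)$-invariant, $T^{-n}\mathscr{P}_\mu(\varphi)=\mathscr{P}_\mu(\varphi)$ for all $n$; hence $\mathscr{P}_\mu(\varphi)\subseteq\hat\eta$ would immediately give $\mathscr{P}_\mu(\varphi)=T^{-n}\mathscr{P}_\mu(\varphi)\subseteq T^{-n}\hat\eta$ for every $n\ge1$, i.e.\ $\bigwedge_{i\in\mathbb{N}}\Phi_{-i}\eta\succ\mathcal{P}_\mu$, which is the claim. By the very definition of the Pinsker $\sigma$-algebra it then suffices to show: every $A\in\mathscr{F}\otimes\mathscr{B}_M$ with $h_\mu(\varphi,\{A,A^c\})=0$ lies in $\hat\eta\pmod\mu$.

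To do this I would fix such an $A$, set $\beta=\{A,A^c\}$, and first extract from $h_\mu(\varphi,\beta)=0$ that $\beta$ is measurable with respect to its own strict past together with the base. Concretely, the relative Kolmogorov--Sinai formula for RDS entropy (the fibrewise analogue of $h(T,\beta)=H_\mu(\beta\mid\bigvee_{i\ge1}T^{-i}\hat\beta)$) gives $h_\mu(\varphi,\beta)=H_\mu\bigl(\beta\bigm|\bigvee_{i\ge1}T^{-i}\hat\beta\vee\hat\varpi\bigr)$, so $\hat\beta\subseteq\bigvee_{i\ge1}T^{-i}\hat\beta\vee\hat\varpi\pmod\mu$; applying $T^{-1}$ and using that $\hat\varpi$ is $T$-invariant, one iterates this to $\hat\beta\subseteq\bigvee_{i\ge k}T^{-i}\hat\beta\vee\hat\varpi$ for every $k\ge1$.

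The heart of the argument is then the estimate $H_\mu(\beta\mid\hat\eta)=0$. Given $\varepsilon>0$: using $\bigvee_{n\ge0}T^n\hat\eta=\mathscr{F}\otimes\mathscr{B}_M$ (and that these algebras increase), approximate $\beta$ by a finite partition $\beta_N$ that is $T^N\hat\eta$-measurable for some $N\ge1$; next, since $\hat\beta\subseteq\bigvee_{i\ge N}T^{-i}\hat\beta\vee\hat\varpi$, the increasing martingale theorem for conditional entropy yields $n$ with $H_\mu\bigl(\beta\bigm|\bigvee_{i=N}^{N+n}T^{-i}\hat\beta\vee\hat\varpi\bigr)<\varepsilon/3$; finally, take $\beta_N$ fine enough that replacing $\hat\beta$ by $\hat\beta_N$ in these finitely many conditioning partitions perturbs the conditional entropy by at most $\varepsilon/3$ (via $|H_\mu(\beta\mid\zeta\vee\mathscr{C})-H_\mu(\beta\mid\zeta'\vee\mathscr{C})|\le H_\mu(\zeta\mid\zeta')+H_\mu(\zeta'\mid\zeta)$ applied to $\zeta=\bigvee_{i=N}^{N+n}T^{-i}\beta$, $\zeta'=\bigvee_{i=N}^{N+n}T^{-i}\beta_N$, $\mathscr{C}=\hat\varpi$). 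Since $T^{-i}\hat\beta_N\subseteq T^{N-i}\hat\eta\subseteq\hat\eta$ for $i\ge N$ (because $T^{-1}\hat\eta\subseteq\hat\eta$) and $\hat\varpi\subseteq\hat\eta$, the resulting conditioning $\sigma$-algebra $\bigvee_{i=N}^{N+n}T^{-i}\hat\beta_N\vee\hat\varpi$ sits inside $\hat\eta$, whence $H_\mu(\beta\mid\hat\eta)\le H_\mu\bigl(\beta\bigm|\bigvee_{i=N}^{N+n}T^{-i}\hat\beta_N\vee\hat\varpi\bigr)<2\varepsilon/3$; letting $\varepsilon\to0$ finishes it.

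The step I expect to be the main obstacle is the bookkeeping in this last estimate: the approximation depth $N$, the number $n$ of past translates dictated by the martingale limit at level $k=N$, and the fineness of $\beta_N$ must be fixed in exactly that order, or the argument becomes circular. Beyond that, the remaining points are to verify that the deterministic inputs---the relative Kolmogorov--Sinai formula, the $\Phi_1$-invariance of $\hat\varpi$ and of $\mathscr{P}_\mu(\varphi)$, and the continuity of conditional entropy under perturbing finitely many conditioning partitions---transfer verbatim to the skew product $(\Phi_t)_{t\in\mathbf{T}}$, which is routine; alternatively, one can simply quote the general statement \cite[Lemma 3.6]{MR2197376} once its hypotheses are checked.
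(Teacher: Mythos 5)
Your reduction is sound: since $\mathscr{P}_\mu(\varphi)$ is invariant under $T=\Phi_1$, it does suffice to prove $\mathscr{P}_\mu(\varphi)\subseteq\hat\eta$, and for that it suffices to show $H_\mu(\beta\mid\hat\eta)=0$ for every two-set partition $\beta=\{A,A^c\}$ of zero fibre entropy; your derivation of $\hat\beta\subseteq\bigvee_{i\geq k}T^{-i}\hat\beta\vee\hat\varpi$ for all $k$ is also fine. The gap is in the central approximation paragraph, and it is exactly the circularity you flag --- but fixing the data ``in the order $N$, then $n$, then the fineness of $\beta_N$'' does not resolve it. Once $N$ is fixed, $T^N\hat\eta$ is a fixed $\sigma$-algebra, and there is no reason $\beta$ admits $T^N\hat\eta$-measurable approximations of arbitrarily small entropy distance: a priori the only way to improve the approximation is to enlarge $N$ (using $\bigvee_n T^n\hat\eta=\varepsilon_{\Omega\times M}$), but you need the conditioning to start at $i=N$ (so that $T^{-i}\hat\beta_N\subseteq\hat\eta$), so enlarging $N$ changes the martingale index $n=n(N)$, which changes the required fineness, and the loop never closes. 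Worse, the statement ``for every fixed $N$ there are $T^N\hat\eta$-measurable partitions arbitrarily close to $\beta$'' is essentially equivalent to $\hat\beta\subseteq\hat\eta$, which is the conclusion being proved; so the argument as written begs the question. This is the well-known reason the Rokhlin--Sinai theorem does not follow from the naive approximation scheme.

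The paper itself gives no written proof and delegates to the relative version in \cite[Lemma 3.6]{MR2197376}; the argument there (and in the classical proofs) works with the generating partition $\eta$ itself instead of approximating $\beta$ inside it. For a finite partition $\gamma\prec\mathcal{P}_\mu$ one first shows $H_\mu(\eta\mid\Phi_{-n}\eta\vee\gamma)=H_\mu(\eta\mid\Phi_{-n}\eta)$ by the chain $H_\mu(\eta\mid\Phi_{-n}\eta\vee\gamma)\leq H_\mu(\eta\mid\Phi_{-n}\eta)=h_\mu(\Phi_n,\eta\mid\varpi)=h_\mu(\Phi_n\mid\varpi)=h_\mu(\Phi_n\mid\mathcal{P}_\mu)=H_\mu(\eta\mid\Phi_{-n}\eta\vee\mathcal{P}_\mu)\leq H_\mu(\eta\mid\Phi_{-n}\eta\vee\gamma)$, which uses that $\eta$ is increasing and generating for $\Phi_n$ and that the Pinsker factor carries no entropy. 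Feeding this into the chain rule for $H_\mu(\gamma\vee\Phi_n\eta\mid\Phi_{-n}\eta)$ yields the identity $H_\mu(\gamma\mid\Phi_{-n}\eta)=H_\mu(\gamma\mid\Phi_n\eta)$; the right-hand side decreases to $0$ because $\bigvee_n\Phi_n\eta=\varepsilon_{\Omega\times M}$, while the left-hand side is nondecreasing in $n$, so both vanish identically and $\gamma\prec\Phi_{-n}\eta$ for every $n$. Your proof would be complete if the approximation paragraph were replaced by an argument of this type, or by the direct appeal to \cite[Lemma 3.6]{MR2197376} that you mention as a fallback in your last sentence.
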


\section{Hyperbolic RDS is   a K-system}
\label{23-8-13-2002}
In this section, we limit our setting on abstract smooth RDSs and  we prove that a $C^2$ RDS with a   totally ergodic  non-uniformly hyperbolic  SRB measure  is a $K$-system, namely \Cref{23-6-27-1553}.  The main idea of this proof  is inspired by  the Pesin's spectral decomposition theorem for determinant systems (for example, see \cite{MR0466791, MR3076414}).  Throughout this section,  we assume that 
\begin{enumerate}
\item[(A1)] \label{23-9-29-1456-1} $M$ is a $d$-dimensional  closed connected  Riemannian manifold with $d\geq2$, and $\varphi$ is a RDS on $M$ over a metric system $(\Omega,\mathscr{F},\mathbb{P},(\theta_t)_{t\in\mathbf{T}})$ satisfying that   $\varphi_1(\omega)$ is a $C^2$ diffeomorphism on $M$  for $\mathbb{P}$-a.s. $\omega\in\Omega$  and $\mathbf{T}=\mathbb{Z}$ or $\mathbb{Z}_+$;   
\item[(A2)] \label{23-9-29-1456-2} the RDS $\varphi$  satisfies  following integrability condition
\begin{align}
\label{23-7-5-2042}
\int_{\Omega}\big(\log^+\|\varphi_1(\omega)\|_{C^2}+\log^+\|\varphi_1({\omega})^{-1}\|_{C^2}\big)\mathrm{d}\mathbb{P}(\omega)<+\infty,
\end{align}
where $\log ^+|a|=\max\{\log|a|,0\}$ with $a\in\mathbb{R}$;
\item[(A3)] \label{23-9-29-1456-3}the Borel probability measure $\mu$  on $\Omega\times M$ is an ergodic measure of RDS $\varphi$.
\end{enumerate}

\subsection{Lyapunov exponets and entropy}
In this section, we mainly review the  relationship  between Lyapunov exponents and entropy for RDSs.  Firstly, we recall the well-known multiplicative ergodic theorem, which ensures the existence   of  Lyapunov exponents. The reader  can refer to \cite[Theorem 3.4.1, Theorem 4.2.6 and Theorem 4.3.14]{A} for the details.

\begin{proposition}
\label{23-8-13-1628}
Let $(\varphi,\mu)$ be a measure-preserving  RDS on $M$ over a metric system $(\Omega,\mathscr{F},\mathbb{P},(\theta_t)_{t\in\mathbf{T}})$  satisfying (\hyperref[23-9-29-1456-1]{A1})-(\hyperref[23-9-29-1456-1]{A3}). 

\noindent\textbf{One-side:} assume that $\varphi$ is one-side RDS.  Then there exist $r\in\{1,\dots,d\}$,  constant values $\lambda_1>\cdots>\lambda_{r}$, $m_1,\dots, m_r\in\mathbb{N}$ and  a measurable filtration 
$$TM=:V_1(\omega,x)\supsetneq V_2(\omega,x)\supsetneq\cdots \supsetneq V_{r}(\omega,x)\supsetneq V_{r+1}(\omega,x):=\{0\}$$
with following properties  for $\mathbb{\mu}$-a.s. $(\omega,x)\in\Omega\times M$ and any  $i\in\{1,\dots, r\}$:
\begin{enumerate}[(a)]
\item   for  $v\in V_{i}(\omega,x)\setminus V_{i+1}(\omega,x)$, one has that 
$$\lim_{n\to+\infty}\frac{1}{n}\log|d_x\varphi_n(\omega)v|=\lambda_i;$$
\item $d_x\varphi_n(\omega)V_i(\omega,x)=V_i(\Phi_n(\omega,x))$ and   $m_i=\dim V_{i}(\omega,x)-\dim V_{i+1}(\omega,x)$, where $\Phi_n$ is defined as \eqref{23-8-15-1253}.
\end{enumerate}
\textbf{Two-side:} assume that $\varphi$  is  two-side RDS.  Then there exist $r\in\{1,\dots,d\}$,  constant values $\lambda_1>\cdots>\lambda_{r}$, $m_1,\dots, m_r\in\mathbb{N}$ and  a measurable filtration 
$$TM=E_1(\omega,x)\oplus E_2(\omega,x)\oplus\cdots\oplus E_{r}(\omega,x)$$
with following properties  for $\mathbb{\mu}$-a.s. $(\omega,x)\in\Omega\times M$ and any  $i\in\{1,\dots,r\}$:
\begin{enumerate}[(a)]
\item for  all $v\in E_{i}(\omega,x)$, one has that 
$$\lim_{n\to\pm\infty}\frac{1}{n}\log|d_x\varphi_n(\omega)v|=\lambda_i;$$
\item $d_x\varphi_n(\omega)E_i(\omega,x)=E_i(\Phi_n(\omega,x))$ and  $m_i=\dim E_{i}(\omega,x)$;
\item  denoting $\pi_i(\omega, x)$ as the projection on $E_i(\omega, x)$ along $\bigoplus_{j\in\{1,\dots,r\}\setminus i}E_j(\omega,x)$,  then  
$$\lim_{n\to\pm\infty}\frac{1}{n}\log|\pi_{i}(\Phi_n(\omega,x))|=0.$$
\end{enumerate}
The constants  $\lambda_i$  are called Lyapunov exponents of $(\varphi,\mu)$ with multiplicities $m_i$. 
\end{proposition}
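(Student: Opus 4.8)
The statement is the Oseledets multiplicative ergodic theorem applied to the derivative cocycle of the random dynamical system, and the plan is to reduce it to the classical form of that theorem. First I would pass from $\varphi$ to its linearization: for $n\in\mathbf{T}$ set $A_n(\omega,x):=d_x\varphi_n(\omega)\colon T_xM\to T_{\varphi_n(\omega)x}M$, which by the chain rule is a linear cocycle over the base system $(\Omega\times M,\mathscr{F}\otimes\mathscr{B}_M,\mu,(\Phi_t)_{t\in\mathbf{T}})$ (invertible when $\mathbf{T}=\mathbb{Z}$). Since $M$ is a compact manifold, $TM$ admits a $\mu$-measurable trivialization, e.g. a measurable field of orthonormal bases of the fibres $T_xM$, so $A_1$ becomes a measurable map $\Omega\times M\to GL(d,\mathbb{R})$; by compactness the norms of the trivialized cocycle are comparable, up to a uniform constant, to the intrinsic operator norms of $d_x\varphi_1(\omega)$.

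Next I would check the $L^1$ integrability hypothesis of the classical theorem. For $\mathbb{P}$-a.e.\ $\omega$ one has $\sup_{x\in M}\|d_x\varphi_1(\omega)\|\le\|\varphi_1(\omega)\|_{C^1}\le\|\varphi_1(\omega)\|_{C^2}$, and likewise $\sup_{x\in M}\|(d_x\varphi_1(\omega))^{-1}\|\le\|\varphi_1(\omega)^{-1}\|_{C^2}$; integrating out $x$, and using that the $\Omega$-marginal of $\mu$ is $\mathbb{P}$, assumption (A2) yields $\log^+\|A_1\|,\ \log^+\|A_1^{-1}\|\in L^1(\Omega\times M,\mu)$. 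This is exactly the integrability required by Oseledets.

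Then I would invoke the multiplicative ergodic theorem in the form given in Arnold's monograph. In the one-sided case $\mathbf{T}=\mathbb{Z}_+$, Kingman's subadditive ergodic theorem applied to the subadditive sequences $\log\|\Lambda^k A_n(\omega,x)\|$, $k=1,\dots,d$, produces the Lyapunov spectrum and, via the standard construction of the slowest-growing directions, the decreasing measurable filtration $V_1\supsetneq\cdots\supsetneq V_{r+1}=\{0\}$ with properties (a) and (b); equivariance of the $V_i$ under $d\varphi_n$ is immediate from the cocycle identity $A_{n+m}(\omega,x)=A_n(\Phi_m(\omega,x))A_m(\omega,x)$. In the two-sided case $\mathbf{T}=\mathbb{Z}$ one runs the same argument for the backward cocycle and intersects the forward and backward filtrations to obtain the splitting $TM=E_1\oplus\cdots\oplus E_r$; the subexponential growth $\frac1n\log|\pi_i(\Phi_n(\omega,x))|\to 0$ is the usual tempering estimate for the angles between the Oseledets subspaces. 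Finally, since $\mu$ is ergodic for $(\Phi_t)_{t\in\mathbf{T}}$ — equivalently for $\Phi_1$, which generates the acting (semi)group — the integer $r$, the exponents $\lambda_1>\cdots>\lambda_r$ and the multiplicities $m_i$ are $(\Phi_t)$-invariant measurable functions, hence $\mu$-a.e.\ constant. The full details coincide with those of \cite[Theorems 3.4.1, 4.2.6, 4.3.14]{A}.

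The only genuinely delicate point, were one to prove the theorem from scratch rather than quote it, is the construction and measurability of the Oseledets subspaces: showing that the "most contracted" flag converges requires careful singular-value (polar decomposition) estimates on $A_n$ coupled with Kingman's theorem, and the measurable dependence of the resulting subbundles on $(\omega,x)$ needs the standard measurable selection arguments. Given the cited results, however, there is no real obstacle here; the substance of the proof is the reduction to a matrix cocycle together with the integrability verification above.
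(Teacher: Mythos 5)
This is exactly the paper's treatment: the paper gives no proof of this proposition, referring instead to \cite[Theorems 3.4.1, 4.2.6 and 4.3.14]{A}, and your reduction to the matrix cocycle together with the integrability check from (A2) is the standard route those references take. Your proposal is correct and coincides with the paper's approach.
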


Let  $(\varphi,\mu)$  be a  one-side  measure-preserving RDS on $M$ over  $(\Omega,\mathscr{F}, \mathbb{P}, (\theta_n)_{n\in\mathbb{Z}_+})$ which satisfies (\hyperref[23-9-29-1456-1]{A1})-(\hyperref[23-9-29-1456-1]{A3}).  In the following we will give a natural way to define   a two-side measure-preserving  RDS.  By \cite{MR0143873, MR0217258},  there exists an ergodic measure-preserving dynamical system  $(\hat{\Omega},\hat{\mathscr{F}}, \hat{\mathbb{P}}, (\hat{\theta}_n)_{n\in\mathbb{Z}})$ on the Polish probability space $(\hat{\Omega},\hat{\mathscr{F}}, \hat{\mathbb{P}})$  and  a factor map  
\begin{align}
\label{23-10-17-0002}
\hat{\pi}:  (\hat{\Omega}, \hat{\mathscr{F}}, \hat{\mathbb{P}}, (\hat{\theta}_n)_{n\in\mathbb{Z}})\to (\Omega,\mathscr{F}, \mathbb{P}, (\theta_n)_{n\in\mathbb{Z}_+}).
\end{align}
Usually, \eqref{23-10-17-0002} is called  the invertible  extension of $(\Omega,\mathscr{F}, \mathbb{P},(\theta_n)_{n\in\mathbb{Z}_+})$.  Then define a two-side  RDS $\hat{\varphi}$  on $M$ over $ (\hat{\Omega},\hat{\mathscr{F}}, \hat{\mathbb{P}},(\hat{\theta}_n)_{n\in\mathbb{Z}})$ as 
\begin{align}
\label{23-8-15-1413}
\hat{\varphi}_n(\hat{\omega})x=
\begin{cases}
\varphi_n\big(\hat{\pi}(\hat{\omega})\big)x \quad&\text{ if } n\geq0
\\ \varphi_{-n}\big(\hat{\pi}(\hat{\theta}_{n}\hat{\omega})\big)^{-1}x\quad&\text{ if } n<0.
\end{cases}
\end{align}
By \cite[Theorem 1.7.2]{A}, it is clear that there exists a unique ergodic measure $\hat{\mu}$   of RDS $\hat{\varphi}$ with the marginal $\mu$ on $\Omega\times M$. In this paper, we call  two-side measure-preserving RDS $(\hat{\varphi},\hat{\mu})$ as invertible extension of the measure-preserving  RDS $(\varphi,\mu)$.  The reader should be caution that  the main reason why we can do as this is that  $\varphi_1(\omega)$ is a diffeomorphism on $M$ for $\mathbb{P}$-a.s. $\omega\in\Omega$.  According to  \Cref{23-8-13-1628} and  a similar argument in \cite[Theorem 2.3 in Chapter I]{MR1369243}, we have the following remark. 
\begin{rem}
\label{23-8-13-1921}
The entropy and the Lyapunov exponents of $(\varphi,\mu)$  are coincides with the Lyapunov exponents and entropy of $(\hat{\varphi},\hat{\mu})$, respectively. 
\end{rem}

Finally,  we review the celebrate Pesin's entropy formula and its characterization in  RDSs.
\begin{proposition}
\label{23-8-15-1444}
Assuming that $(\varphi,\mu)$ is a one-side RDS satisfying (\hyperref[23-9-29-1456-1]{A1})-(\hyperref[23-9-29-1456-1]{A3}),  then $\mu$ is a SRB measure (see \Cref{23-8-13-1643}) if and only if following equality holds, 
$$h_{\mu}(\varphi)=\sum_{i=1}^r\lambda_i^+m_i,$$
where $a^+:=\max\{a,0\}$ with $a\in\mathbb{R}$.  Particularly,  if $\mu=\mathbb{P}\times\text{vol}$ is an ergodic measure of RDS $\varphi$, where $\text{vol}$ is the volume measure on $M$, then $\mu$ and $\hat{\mu}$  are both  SRB measure of RDS $\varphi$  and $\hat{\varphi}$, respectively, where $(\hat{\varphi},\hat{\mu})$ is the invertible extension of the RDS $(\varphi,\mu)$.
\end{proposition}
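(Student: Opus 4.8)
The plan is to reduce everything to the classical Ledrappier--Young / Pesin entropy formula, transported to the random setting. First I would recall the definition of an SRB measure for an RDS (the referenced Definition~\ref{23-8-13-1643}): $\hat\mu$ is SRB if, for $\hat\mu$-a.e.\ $(\hat\omega,x)$, the conditional measures $\hat\mu_{\hat\omega,x}$ induced on the unstable manifolds $W^u(\hat\omega,x)$ are absolutely continuous with respect to the Riemannian leaf volume. Since this is a leaf-wise property on the two-sided system where unstable manifolds are defined, the natural statement to invoke is the random Pesin formula for two-sided $C^2$ RDSs: under (\hyperref[23-9-29-1456-1]{A1})--(\hyperref[23-9-29-1456-1]{A3}), $\hat\mu$ is SRB if and only if $h_{\hat\mu}(\hat\varphi)=\sum_{i=1}^r\lambda_i^+ m_i$. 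This is exactly the content of the relative versions of Ruelle's inequality and Ledrappier--Young's characterization of equality, which hold in the RDS framework (e.g.\ the references \cite{MR0466791,MR3076414} cited just before the statement, adapted to random maps as in \cite{A}).

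The second step is the passage between $(\varphi,\mu)$ and its invertible extension $(\hat\varphi,\hat\mu)$. By \Cref{23-8-13-1921}, the Lyapunov exponents $\lambda_i$ and multiplicities $m_i$ of $(\varphi,\mu)$ coincide with those of $(\hat\varphi,\hat\mu)$, and likewise $h_\mu(\varphi)=h_{\hat\mu}(\hat\varphi)$. Thus the numerical identity $h_\mu(\varphi)=\sum_i\lambda_i^+m_i$ holds for the one-sided system if and only if it holds for the two-sided one. The only subtlety is matching the \emph{definition} of ``SRB'' for the one-sided RDS with that for the two-sided extension; I would either take ``$\mu$ is SRB'' to mean by definition that $\hat\mu$ is SRB (the standard convention, since unstable manifolds require invertibility), or, if Definition~\ref{23-8-13-1643} phrases it intrinsically one-sidedly, note that the two notions agree because the invertible extension $\hat\pi$ is an isomorphism on the relevant $\sigma$-algebras and preserves the unstable-manifold structure. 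Combining this with the first step yields the stated equivalence.

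For the ``particularly'' clause, suppose $\mu=\mathbb{P}\times\mathrm{vol}$ is $\varphi$-invariant and ergodic. Then on $\Omega\times M$ the measure is simply the Riemannian volume fibre-wise, which is trivially absolutely continuous on \emph{every} submanifold, in particular on the unstable manifolds of $\hat\varphi$; hence $\hat\mu$ is SRB, and therefore $\mu$ is SRB, and by the equivalence already proved $h_\mu(\varphi)=\sum_i\lambda_i^+m_i$. Alternatively one argues directly: volume is preserved, so the sum of all Lyapunov exponents is $0$ (the random analogue of $\int\log|\det d\varphi_1|\,d\mathbb{P}=0$, which follows from $\mathbb{P}\times\mathrm{vol}$-invariance together with the integrability in (\hyperref[23-9-29-1456-2]{A2})), and a measure whose conditional densities on unstable leaves come from a globally smooth invariant density automatically satisfies Pesin's formula.

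The main obstacle I anticipate is not conceptual but bookkeeping: making sure the random multiplicative ergodic theorem (\Cref{23-8-13-1628}), the integrability hypothesis (\hyperref[23-9-29-1456-2]{A2}), and the $C^2$ regularity are enough to invoke the random Ledrappier--Young theorem in exactly the form needed, and that the entropy used here (defined fibre-wise via the disintegration $\mu=\int\delta_\omega\times\mu_\omega\,d\mathbb{P}$) coincides with the fibre entropy appearing in that theorem. I expect this to be routine given the cited literature (\cite{A} and the Pesin-theory references), so the proof will mostly consist of citing the correct statements and invoking \Cref{23-8-13-1921} to move between the one- and two-sided pictures.
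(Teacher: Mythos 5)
Your proposal follows the same basic strategy as the paper: both proofs are essentially citations of the random Ledrappier--Young/Pesin-type entropy formula, combined with \Cref{23-8-13-1921} to move between the one-sided system and its invertible extension. The differences are worth recording. First, the paper does not detour through the two-sided extension for the main equivalence: it applies the one-sided result directly, namely \cite[Theorem 2.2]{MR1944407} (Liu--Qian--Zhang, entropy formula of Pesin type for \emph{one-sided} stationary random maps) or \cite[Theorem 3.13]{KiL}, after first checking the hypothesis you only gesture at, the integrability of $\log|\det d_x\varphi_1(\omega)|$, which it deduces from \eqref{23-7-5-2042} via $-\log^+\|\varphi_1(\omega)^{-1}\|_{C^2}\lesssim\log\inf_x|\det d_x\varphi_1(\omega)|\lesssim\log^+\|\varphi_1(\omega)\|_{C^2}$. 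Second, and related, your recollection of \Cref{23-8-13-1643} is off: the paper defines the SRB property through conditional measures on \emph{stable} manifolds (which are intrinsic to the one-sided system), not unstable ones, precisely so that no invertible extension is needed to state the equivalence; your hedge about reconciling the two conventions is therefore the part that would actually need to be carried out if you insist on working two-sidedly. Third, for the ``particularly'' clause your primary argument --- that volume is ``trivially absolutely continuous on every submanifold'' --- is not trivial and is not correct as stated: absolute continuity of the conditional measures of $\mathrm{vol}$ on the leaves of a measurable partition subordinate to the (un)stable lamination is exactly the absolute-continuity theorem for Pesin laminations, not a consequence of Fubini (this is the classical ``Fubini's nightmare'' issue). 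Likewise, the claim that invariance of $\mathbb{P}\times\mathrm{vol}$ forces the sum of the Lyapunov exponents to vanish is not justified in the abstract setting and is not needed. Your alternative route --- absolutely continuous invariant measure $\Rightarrow$ Pesin's formula $\Rightarrow$ SRB by the equivalence --- is the one the paper takes, via \cite[Corollary 2.3]{MR1944407} together with \Cref{23-8-13-1921} to cover $\hat{\mu}$ as well.
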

\begin{proof}
Note that 
\begin{align*}
-\log^+|\varphi_{1}(\omega)^{-1}|_{C^2}\lesssim \log\inf_{x\in M}|\det d_x\varphi_1(\omega)|\lesssim\log^+|\varphi_1(\omega)|_{C^2}.
\end{align*}
By \cite[Theorem 2.2 and ]{MR1944407} or \cite[Theorem 3.13]{KiL}, one has that  $\mu$ is a SRB measure if and only if  $h_{\mu}(\varphi)=\sum_{i=1}^r\lambda_i^+m_i$.  If  $\mu=\mathbb{P}\times\text{vol}$,  then $\mu$ and $\hat{\mu}$  are both  random SRB measures, since \Cref{23-8-13-1921} and \cite[Corollary 2.3]{MR1944407}.  This completes the proof of the Proposition.
\end{proof}

\subsection{$K$-system}
Recall that $(\varphi,\mu)$ is the  measure-preserving RDS on $M$ over the metric  system $(\Omega,\mathscr{F},\mathbb{P},(\theta_t)_{t\in\mathbf{T}})$ and it satisfies (\hyperref[23-9-29-1456-1]{A1})-(\hyperref[23-9-29-1456-3]{A3}).  In this subsection,  we additionally assume that 
\begin{enumerate}
\item[(A4)]\label{23-9-29-1456-4} $\varphi$  is a two-side measure-preserving RDS;
\item[(A5)]\label{23-9-29-1456-5}  $\mu$ is an ergodic hyperbolic measure of $\varphi$, i.e. $(\varphi,\mu)$ has non-zero Lyapunov exponents;
\item[(A6)]\label{23-9-29-1456-6} $(\varphi,\mu)$ is totally ergodic, i.e.  its induced measure-preserving dynamical system $(\Omega\times M, (\Phi_{nm})_{m\in\mathbb{Z}},\mu)$ is ergodic for any $n\in\mathbb{N}$.
 \end{enumerate}
In the next, we will use the invariant manifold theorem to prove that $(\varphi,\mu)$ is a  $K$-system.  Invariant manifold theorem describes  geometry  structures of stables sets  and unstable sets defined as following, 
\begin{align*}
&W^s_{(\omega,x)}=\{y\in M:  \limsup_{n\to+\infty}\frac{1}{n}\log \text{dist}(\varphi_{n}(\omega)y, \varphi_{n}(\omega)x)<0\}
\\&W^u_{(\omega,x)}=\{y\in M:  \limsup_{n\to+\infty}\frac{1}{n}\log \text{dist}(\varphi_{-n}(\omega)y, \varphi_{-n}(\omega)x)<0\},
\end{align*}
where $\text{dist}$ is the metric induced by the Riemannian structure of $M$.  Due to multiplicative  ergodic theorem (see \Cref{23-8-13-1628}),  denote 
 $$ E^s_{(\omega,x)}:=\bigoplus_{\lambda_i<0}E_{i}{(\omega,x)}\quad\text{and}\quad  E^u_{(\omega,x)}:=\bigoplus_{\lambda_i>0}E_{i}{(\omega,x)} $$
 as the stable space and unstable space at $(\omega, x)$ of $(\varphi,\mu)$, respectively.
Now, we state a convenient version of  invariant manifold theorem. The reader can refer  to \cite{MR0805125} or \cite[Chapter V]{MR1369243} for an analogous  proof.
\begin{proposition}[Invariant manifold theorem]
\label{23-09-29-1641}
Assume that the measure-preserving RDS $(\varphi,\mu)$ satisfies that (\hyperref[23-9-29-1456-1]{A1})-(\hyperref[23-9-29-1456-5]{A5}).  Given a $\lambda\in(0, \min_{i=1,\dots,r}|\lambda_i|)$, there exist two tempered functions $l, C: \Omega\times M\to [1,+\infty)$\footnote{A tempered function $l$ on $\Omega\times M$ is that $l$  is a Borel measurable function and $\lim_{n\to\pm\infty}\frac{\log|l(\Phi_n(\omega,x))|}{n}=0.$} such that for any enough small constant $\delta>0$ there is a  $\Phi$-invariant $\mu$-full measure  set $\Gamma$ and a  unique family of   $C^{1+Lip}$ maps 
$$\{g^{\tau}_{(\omega, x)}: E^{\tau}_{(\omega, x)}(\delta l(\omega,x)^{-1})\to E^{\tau'}_{(\omega, x)}\}_{(\omega,x)\in\Gamma},$$ where  $\tau\neq\tau'\in\{u, s\}$, with following properties 
\begin{enumerate}[(a)]
\item $d_0g^{\tau}_{(\omega, x)}=0$;
\item $\text{Lip}(g^{\tau}_{(\omega, x)})\leq 1/10$,  and $\text{Lip}(d_0g^{\tau}_{(\omega, x)})\leq C(\omega, x)l(\omega, x)$;
\item  for any  $n\in\mathbb{N}$ one has  that 
\begin{align*}
&\text{dist}(\varphi_{-n}(\omega)y_1,  \varphi_{-n}(\omega)y_2)\leq C(\omega,x)e^{-n\lambda}d(y_1,y_2) \quad \text{if }y_1,y_2\in W^{u}_{(\omega,x),\delta}:=\exp_x(\text{graph }g^{u}_{(\omega, x)}),
\\& \text{dist}(\varphi_n(\omega)y_1,  \varphi_n(\omega)y_2)\leq C(\omega, x)e^{-n\lambda}d(y_1,y_2)\qquad \text{if } y_1,y_2\in W^{s}_{(\omega,x),\delta}:=\exp_x(\text{graph }g^{s}_{(\omega, x)}),
\end{align*}
where $\exp_x$ is the exponential map at $x$; 
\item  one has that 
\begin{align}
W^u_{(\omega,x)}=\bigcup_{n\in\mathbb{Z}_+}\varphi_n(\theta_{-n}\omega)W^u_{(\Phi_{-n}(\omega,x)),\delta}, \quad W^s_{(\omega,x)}=\bigcup_{n\in\mathbb{Z}_+}\varphi_{-n}(\theta_{n}\omega)W^s_{(\Phi_{n}(\omega,x)),\delta},
\end{align}
which are immersed submanifolds of $M$.
\end{enumerate}
\end{proposition}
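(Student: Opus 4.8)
The plan is to carry out the Hadamard--Perron graph transform in Pesin--Lyapunov charts adapted to the Oseledets splitting; the only feature beyond the deterministic non-uniformly hyperbolic theory is the $\omega$-dependence, which is controlled throughout by temperedness, so in the end the argument is the one in \cite[Chapter~V]{MR1369243} supplemented with measurable bookkeeping. First I would pass to Lyapunov metrics: by the two-side part of \Cref{23-8-13-1628} together with (\hyperref[23-9-29-1456-5]{A5}), over a $\Phi$-invariant $\mu$-full set the tangent bundle splits $\Phi$-equivariantly as $T_xM=E^s_{(\omega,x)}\oplus E^u_{(\omega,x)}$, with $d\varphi_{\pm n}$ contracting $E^s$ (resp.\ $E^u$) at a rate strictly faster than $e^{-n\lambda}$. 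Summing the Birkhoff-type series $\sum_{n\geq0}e^{2n\lambda}|d_x\varphi_n(\omega)v|^2$ on $E^s$ and its backward analogue on $E^u$ yields a measurable fibre inner product $\langle\cdot,\cdot\rangle'_{(\omega,x)}$ in which the cocycle is \emph{uniformly} hyperbolic:
\[
|d_x\varphi_1(\omega)v|'\leq e^{-\lambda}|v|'\quad(v\in E^s_{(\omega,x)}),\qquad |d_x\varphi_{-1}(\omega)v|'\leq e^{-\lambda}|v|'\quad(v\in E^u_{(\omega,x)}).
\]
One then takes $l(\omega,x)\geq1$ to dominate both the ratio of $|\cdot|'$ to the Riemannian norm and the reciprocal of the angle between $E^s$ and $E^u$; the integrability condition (\hyperref[23-9-29-1456-2]{A2}) makes the cocycle grow subexponentially, which forces $l$ to be tempered along $\Phi$, and this is the function $l$ of the statement.

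Next I would linearize in charts. For $(\omega,x)$ in the good set put $y=\varphi_1(\omega)x$ and $F_{(\omega,x)}:=\exp_y^{-1}\circ\varphi_1(\omega)\circ\exp_x$, a $C^2$ map with $F_{(\omega,x)}(0)=0$ and $D_0F_{(\omega,x)}=d_x\varphi_1(\omega)=:A_{(\omega,x)}$, which preserves the splitting and is uniformly hyperbolic in the $|\cdot|'$-norms. Since $\|\varphi_1(\omega)\|_{C^2}$ is integrable by (\hyperref[23-9-29-1456-2]{A2}), the quadratic remainder of $F_{(\omega,x)}$ is bounded by a tempered function; hence for any prescribed small $\eta>0$ there is one small $\delta>0$ such that, on the $|\cdot|'$-ball of radius $\delta\,l(\omega,x)^{-1}$, the nonlinear part $F_{(\omega,x)}-A_{(\omega,x)}$ has $|\cdot|'$-Lipschitz constant $\leq\eta$, and the same holds for the corresponding balls of radius $\delta\,l(\Phi_n(\omega,x))^{-1}$ along the whole orbit --- temperedness of $l$ being exactly what keeps these radii shrinking more slowly than the hyperbolic rates.

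Then I would run the graph transform on one-sided orbit data. On the space of sequences $(g_n)_{n\geq0}$ with $g_n\colon E^u_{\Phi_{-n}(\omega,x)}(\delta\,l^{-1})\to E^s_{\Phi_{-n}(\omega,x)}$, $g_n(0)=0$, of $|\cdot|'$-Lipschitz constant $\leq1/10$, equipped with the sup distance, the map that sends $(g_n)$ to the sequence whose $n$-th term is the graph of $g_{n+1}$ pushed forward by $F_{\Phi_{-(n+1)}(\omega,x)}$ and re-expressed as a graph over $E^u_{\Phi_{-n}(\omega,x)}$ is a contraction (uniform hyperbolicity with $\eta$ small); its unique fixed point has $0$-th component $g^u_{(\omega,x)}$, and passing from $|\cdot|'$ back to the Riemannian metric costs a factor $l$, absorbed into $C(\omega,x)$. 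Property (a) holds because the tangent of the $F$-invariant graph at $0$ is an $A_{(\omega,x)}$-invariant graph through the origin, and the linearized graph transform contracts all graphs over $E^u$ to $E^u$ itself. Property (b) is built into the space, while the bound on $\text{Lip}(d_0g^u)$ comes from one further graph transform on the bundle of first-order jets --- the standard fibre-contraction argument, which simultaneously yields the $C^{1+Lip}$ regularity. Property (c) is the defining exponential estimate along $W^\tau_{(\omega,x),\delta}=\exp_x(\text{graph}\,g^\tau_{(\omega,x)})$, read off from uniform hyperbolicity. Property (d) follows by setting $W^u_{(\omega,x)}:=\bigcup_{n\geq0}\varphi_n(\theta_{-n}\omega)W^u_{\Phi_{-n}(\omega,x),\delta}$, an increasing union of $C^{1+Lip}$ embedded disks, hence an injectively immersed submanifold; the inclusion into the dynamically defined $W^u$ is given by (c), and the reverse inclusion by the fact that any $y$ with $\varphi_{-n}(\omega)y$ tending exponentially to $\varphi_{-n}(\omega)x$ eventually enters the chart at $\Phi_{-n}(\omega,x)$ and is then forced onto the graph. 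Reversing time produces $g^s$ and $W^s$.

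The step I expect to be the main obstacle is the coherent, orbit-wide choice of the tempered functions $l$, $C$ and of the chart radii $\delta\,l(\omega,x)^{-1}$: the subexponential but unbounded growth of $l$ must be strictly dominated by the now-uniform hyperbolic gap, which forces one to build $l$ from a suitably slowed-down tempering kernel so that, say, $l(\Phi_{\pm1}(\omega,x))\leq e^{\lambda/2}l(\omega,x)$. The other technically heavy point, upgrading Lipschitz to $C^{1+Lip}$ with the stated bound $\text{Lip}(d_0g^\tau)\leq C(\omega,x)\,l(\omega,x)$, is handled exactly as in the deterministic Pesin theory of \cite[Chapter~V]{MR1369243}; in the random setting it only requires checking joint measurability in $(\omega,x)$, which is routine once measurability of $E^{s/u}$, $l$ and $C$ has been established.
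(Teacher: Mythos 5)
The paper does not prove this proposition; it only points to \cite{MR0805125} and \cite[Chapter V]{MR1369243} for an analogous argument, and your sketch is exactly that standard route (Lyapunov inner products making the Oseledets cocycle uniformly hyperbolic, tempered chart radii via a tempering-kernel lemma, Hadamard--Perron graph transform along orbits, fibre contraction for the $C^{1+Lip}$ bound, and global manifolds as increasing unions of pushed-forward local ones). So your proposal is correct and takes essentially the same approach as the one the paper relies on.
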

In the above  proposition, we usually call $W^{u}_{(\omega,x),\delta}$ ($W^{s}_{(\omega,x),\delta}$) as  local unstable (stable) manifolds at $(\omega, x)$ of $(\varphi,\mu)$.   Next, we mainly review the construction process of  measurable partitions which  are  subordinated  to the unstable or stable manifolds.  In the  following, we always assume that  $(\varphi,\mu)$ is a measure-preserving RDS  on $M$ over $(\Omega,\mathscr{F},\mathbb{P},(\theta_t)_{t\in\mathbf{T}})$, and it satisfies that (\hyperref[23-9-29-1456-1]{A1})-(\hyperref[23-9-29-1456-5]{A5}) without extra specific explanation.
\begin{definition}
A measurable partition $\eta$ of $\Omega\times M$  is subordinated to the unstable manifolds of $(\varphi,\mu)$  if there exists a positive measurable function $r: \Omega\times M\to \mathbb{R}$ such that for $\mu$-a.s. $(\omega,x)\in\Omega\times M$,  one has that 
$$\mathcal{B}^{u}_{(\omega,x)}(r(\omega,x))\subset \eta^{\omega}(x):=\{y\in M: (\omega,y)\in\eta(\omega, x) \}\subset W^{u}_{(\omega,x)},$$
where $\mathcal{B}^{u}_{(\omega,x)}(r(\omega,x))$ is the open ball at $(\omega,x)$ with the radius $r(\omega,x)$ on the immersed manifold $W^{u}_{(\omega,x)}$.  The definition of measurable partition which is subordinated to the stable manifolds is similar.
\end{definition}
%\begin{rem}
%Above definition of SRB measure  for $u$-measurable and  $s$-measurable partition both is well-defined.  This is the characterized the Peison's entropy formula. see 
%\end{rem}

 Using  Pesin's block, Lusin's theorem  and  properties of invariant manifolds,  we can obtain  following  claims.  Since the argument is standard in smooth ergodic theory,  the    proof  will be omitted.  The reader can refer to \cite[Chapter IV]{MR1369243} and \cite{MR3925383}  for the  details.

 \begin{claim}[Local properties] 
 \label{23-8-11-2111}
Recall that $\Gamma$ is a $\mu$-full measure subset of $\Omega\times M$ defined as \Cref{23-09-29-1641}. Then there exists a positive  $\mu$-measure compact subset $\Gamma_1\subset\Gamma$ such that for any   $\tau\neq\tau'\in\{u,s\}$,  for all $r>0$, there is a sufficiently small positive constant $\epsilon=\epsilon(r)$  and  a point $(\check{\omega}, \check{x})\in \Gamma_1$ such that 
$$\mathcal{L}(r):=\Gamma_1\cap \big(\overline{\mathcal{B}(\check{\omega}, \epsilon)}\times\overline{\mathcal{B}(\check{x}, \epsilon)}\big)$$ with following properties 
\begin{enumerate}[(1)]
\item $\mu(\mathcal{L}(r))>0$; 
\item  for any $(\omega,x)\in\mathcal{L}(r)$,  there is a $C^{1+Lip}$ map $g^{\tau}_{(\omega,x)}: E_{(\check{\omega},\check{x})}^{\tau}(r)\to  E_{(\check{\omega},\check{x})}^{\tau'}(r)$ with $\text{Lip}(g_{(\omega,x)})\leq 1$ such that the connected component of $W^{\tau}_{ (\omega,x),\delta}\cap \overline{B(\check{x}, \epsilon)}$ containing $x$ coincides with $\exp_{\check{x}}(\text{graph }g^{\tau}_{(\omega,x)})$;
\item  $(\omega,x)\mapsto g^{\tau}_{(\omega,x)}$ varies continuously in the uniform norm on $C(E^{\tau}_{(\check{\omega},\check{x})}(r), E^{\tau'}_{(\check{\omega},\check{x})}(r))$ as $(\omega, x)$ varies in $\mathcal{L}(r)$, where $C(X, Y)$ is  collections of  the continuous maps from $X$ to $Y$ with compact metric spaces $X,Y$;
\item  for any $(\omega,x)\in\mathcal{L}(r)$,  submanifold $\text{graph } g_{(\omega,x)}^{\tau}$ is transverse to the family of manifolds $\{\text{graph } g_{(\omega,y)}^{\tau'}\}_{(\omega, y)\in \mathcal{L}(r)}$.
\end{enumerate}
\end{claim}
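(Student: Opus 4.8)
The plan is to run the classical Pesin-block construction from non-uniform hyperbolicity theory. I fix the tempered functions $l,C\colon\Omega\times M\to[1,+\infty)$ and the $\Phi$-invariant $\mu$-full set $\Gamma$ produced by \Cref{23-09-29-1641}, together with the stable/unstable splitting $T_{(\omega,x)}M=E^{s}_{(\omega,x)}\oplus E^{u}_{(\omega,x)}$ coming from the two-sided part of \Cref{23-8-13-1628}. For $N\in\mathbb{N}$ put $\Lambda_N:=\{(\omega,x)\in\Gamma:\ l(\omega,x)\le N,\ C(\omega,x)\le N\}$. Since $l,C$ are finite $\mu$-a.e., $\mu(\Lambda_N)\to1$, so I fix $N$ with $\mu(\Lambda_N)>0$. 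On $\Lambda_N$ one obtains, uniformly, the standard consequences of \Cref{23-09-29-1641}: the local manifolds $W^{\tau}_{(\omega,x),\delta}$ have intrinsic size $\ge\delta/N$, their defining graph maps have Lipschitz constant $\le1/10$ and $C^{1+\Lip}$ bounds controlled by $N^{2}$, and — using temperedness of the projections $\pi_i$ in \Cref{23-8-13-1628} — the angle between $E^{s}_{(\omega,x)}$ and $E^{u}_{(\omega,x)}$ is bounded below by a positive constant.

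Next I upgrade measurability to continuity. The assignments $(\omega,x)\mapsto E^{\tau}_{(\omega,x)}$ and $(\omega,x)\mapsto g^{\tau}_{(\omega,x)}$ are Borel measurable — the latter once the manifold is read in a measurable orthonormal frame adapted to the splitting, so that it becomes a Borel map into the Banach space of $C^{1}$ maps between fixed Euclidean balls (this is standard in the graph-transform construction behind \Cref{23-09-29-1641}). Applying Lusin's theorem to these finitely many maps simultaneously, and then inner regularity of the Borel probability measure $\mu$ on the Polish space $\Omega\times M$, yields a compact set $\Gamma_1\subset\Lambda_N$ with $\mu(\Gamma_1)>0$ on which all of them are continuous, hence uniformly continuous. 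This is the $\Gamma_1$ of the statement, fixed once and for all.

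Now fix $r>0$ (one may assume $r$ small, since the assertion becomes vacuous once $r$ exceeds the uniform manifold size $\delta/N$). I choose $(\check\omega,\check x)\in\Gamma_1$ so that every neighborhood of it meets $\Gamma_1$ in positive $\mu$-measure; such points carry full $\mu|_{\Gamma_1}$-measure because, by second countability, the union of basic open sets meeting $\Gamma_1$ in $\mu$-measure zero is $\mu|_{\Gamma_1}$-null. Using the uniform continuity from the previous step, pick $\epsilon=\epsilon(r)>0$ small enough that, for $(\omega,x),(\omega',x')\in\Gamma_1$ within distance $2\epsilon$, the subspaces $E^{\tau}_{(\omega,x)}$ and $E^{\tau}_{(\omega',x')}$ differ by less than a prescribed small angle $\theta_{0}$ (chosen so that tilting a slope-$\tfrac{1}{10}$ graph by $\theta_{0}$ keeps its slope $\le1$ and does not destroy transversality) and the exponential charts at $x$ and at $\check x$ are $C^{1}$-close; additionally take $\epsilon$ comparable to $r$, calibrated so that the projection to $E^{\tau}_{(\check\omega,\check x)}$ of the connected component of $W^{\tau}_{(\omega,x),\delta}\cap\overline{\mathcal{B}(\check x,\epsilon)}$ through $x$ is exactly the ball $E^{\tau}_{(\check\omega,\check x)}(r)$. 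Then $\mathcal{L}(r):=\Gamma_1\cap(\overline{\mathcal{B}(\check\omega,\epsilon)}\times\overline{\mathcal{B}(\check x,\epsilon)})$ has positive measure by the choice of base point, which is (1). For (2): in the chart $\exp_{\check x}^{-1}$, $W^{\tau}_{(\omega,x),\delta}$ is a graph over $E^{\tau}_{(\omega,x)}$, and since this subspace lies within $\theta_{0}$ of $E^{\tau}_{(\check\omega,\check x)}$, the implicit function theorem rewrites the relevant component as $\exp_{\check x}(\graph g^{\tau}_{(\omega,x)})$ for a $C^{1+\Lip}$ map $g^{\tau}_{(\omega,x)}\colon E^{\tau}_{(\check\omega,\check x)}(r)\to E^{\tau'}_{(\check\omega,\check x)}(r)$ with $\Lip(g^{\tau}_{(\omega,x)})\le1$ and the required $d_{0}$-estimate inherited from $\Lambda_N$. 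For (3): continuity of $(\omega,x)\mapsto g^{\tau}_{(\omega,x)}$ in the uniform norm follows from continuity of $(\omega,x)\mapsto W^{\tau}_{(\omega,x),\delta}$ on $\Gamma_1$ together with continuity of the reparametrization. For (4): the tangent planes of $\graph g^{\tau}_{(\omega,x)}$ and $\graph g^{\tau'}_{(\omega,y)}$ stay in narrow cones about $E^{\tau}_{(\check\omega,\check x)}$ and $E^{\tau'}_{(\check\omega,\check x)}$, which are complementary with angle bounded below, so the two graphs are transverse.

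The soft ingredients — the Pesin block, Lusin's theorem, inner regularity, and the density-point argument — are routine. The real work is concentrated in the third step: re-expressing the local invariant manifolds, which live naturally as graphs over the moving subspaces $E^{\tau}_{(\omega,x)}$ based at the moving points $x$, as graphs over the single reference subspace $E^{\tau}_{(\check\omega,\check x)}$ with domain exactly $E^{\tau}_{(\check\omega,\check x)}(r)$, while keeping the Lipschitz bound $\le1$, the $C^{1+\Lip}$ estimates, and the mutual transversality all uniform. This is what forces the precise calibration between the two scales $r$ and $\epsilon$, and it is where the uniform geometry on $\Lambda_N$ and the uniform continuity on $\Gamma_1$ must be combined with care; the only other point needing attention is arranging, at the outset, the Borel measurability of the invariant-manifold family so that Lusin's theorem applies, which is standard in Pesin theory.
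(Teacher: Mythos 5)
Your proposal is correct and follows precisely the route the paper has in mind: the paper omits the proof of this claim, saying only that it is obtained from Pesin blocks, Lusin's theorem and the properties of invariant manifolds and referring the reader to \cite[Chapter IV]{MR1369243} and \cite{MR3925383}. Your write-up (Pesin block $\Lambda_N$, Lusin plus inner regularity to get the compact set $\Gamma_1$, a density point $(\check\omega,\check x)$, and the reparametrization of the local manifolds as graphs over the fixed reference splitting $E^{\tau}_{(\check\omega,\check x)}\oplus E^{\tau'}_{(\check\omega,\check x)}$) is a faithful expansion of exactly that standard argument, and your explicit attention to the calibration between $\epsilon$ and $r$ addresses the only genuinely delicate point.
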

%Applying the standard argument (see \cite[Chapter IV]{MR1369243})
 \begin{claim}[Subordinated partitions] 
 \label{23-7-5-1453}
For any $\tau\in\{u,s\}$,  define   $\tau$-random stack  as 
\begin{align}
\label{23-7-5-1502}
\mathcal{S}_{\tau}(r):=\bigcup_{(\omega,x)\in\mathcal{L}(r)}\{\omega\}\times \exp_{\check{x}}(\text{graph }g^{\tau}_{(\omega,x)}) 
\end{align}
which is a positive $\mu$-measure subset of  $\Omega\times M$, and  measurable partition as
 $$\xi_{\tau}(r):=\{\omega\times \exp_{\check{x}}(\text{graph }g^{\tau}_{(\omega,x)}): (\omega,x)\in \mathcal{L}(r)\}\cup\{\Omega\times M\setminus \mathcal{S}_{\tau}(r)\} .$$
Then there exists $r_1>0$ such that   $\eta_s:=\bigvee_{i=0}^{+\infty}\Phi_{-i}\xi_{s}(r_1)\vee\varpi$  and $\eta_u:=\bigvee_{i=0}^{+\infty}\Phi_i\xi_{u}(r_1)\vee\varpi$  are two  measurable partitions of $(\Omega\times M,\mathscr{F}\otimes\mathscr{B}_M,\mu)$, where $\varpi=\{\omega\times M: \omega\in \Omega\}$,  which  satisfy  that 
\begin{enumerate}[(a)]
\item  $\varpi\prec \eta_s\prec\Phi_{1}\eta_s$ and $\bigvee_{n\in\mathbb{N}}\Phi_{n}\eta_s=\varepsilon_{\Omega\times M}$;
\item  $\varpi\prec \eta_u\prec\Phi_{-1}\eta_u$ and $\bigvee_{n\in\mathbb{N}}\Phi_{-n}\eta_u=\varepsilon_{\Omega\times M}$;
\item  $\eta_u$ and $\eta_s$ are subordinated to the unstable  and stable manifolds, respectively.
\end{enumerate}
\end{claim}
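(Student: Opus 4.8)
The plan is to establish the statement for the unstable partition $\eta_u$ only; the stable case then comes for free by time reversal. Passing to the RDS $\widetilde\varphi_t(\omega):=\varphi_{-t}(\omega)$ over $(\Omega,\mathscr{F},\mathbb{P},(\theta_{-t})_{t\in\mathbf{T}})$ — which again satisfies the standing assumptions, carries the same ergodic hyperbolic measure $\mu$, has induced map $\widetilde\Phi_t=\Phi_{-t}$, and whose unstable manifolds are precisely the stable manifolds of $\varphi$ — one has $\xi_s(r)=\xi_u^{\widetilde\varphi}(r)$, $\mathcal{S}_s(r)=\mathcal{S}_u^{\widetilde\varphi}(r)$ and $\eta_s=\eta_u^{\widetilde\varphi}$, so that (a) and (c) for $\eta_s$ are exactly (b) and (c) for $\eta_u$ applied to $\widetilde\varphi$ (note $\widetilde\Phi_{-1}=\Phi_1$ and $\widetilde\Phi_{-n}=\Phi_n$). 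For $\eta_u$ itself the object is the random analogue of the classical increasing partition subordinate to unstable manifolds, and I would run the Ledrappier--Strelcyn/Ledrappier--Young scheme as adapted to RDS in \cite[Chapter IV]{MR1369243} and \cite{MR3925383}. The three inputs are: the uniform local geometry on the Pesin block $\mathcal{L}(r)$ supplied by \Cref{23-8-11-2111} (continuity and uniform Lipschitz control of the plaques, transversality, $\mu(\mathcal{L}(r))>0$ hence $\mu(\mathcal{S}_u(r))>0$); the exponential backward contraction along local unstable manifolds from \Cref{23-09-29-1641}(c) with constants uniform on the block, together with temperedness of $l,C$; and Poincar\'e recurrence for the $\mu$-preserving invertible ergodic system $(\Omega\times M,\Phi,\mu)$, which makes $\mu$-a.e. $(\omega,x)$ meet the positive-measure set $\mathcal{S}_u(r)$ for infinitely many $n\to+\infty$ and infinitely many $n\to-\infty$.

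Once $r_1$ has been fixed, properties (a)--(b) are essentially formal. Since $\Phi_1$ is a bijection of $\Omega\times M$ carrying $\{\omega\}\times M$ onto $\{\theta_1\omega\}\times M$, we have $\Phi_1\varpi=\varpi$ as partitions, and together with the cocycle identity $\Phi_1\circ\Phi_i=\Phi_{i+1}$ this yields
\[
\varpi\prec\eta_u=\bigvee_{i=0}^{+\infty}\Phi_i\xi_u(r_1)\vee\varpi\prec\bigvee_{i=-1}^{+\infty}\Phi_i\xi_u(r_1)\vee\varpi=\Phi_{-1}\eta_u,\qquad \bigvee_{n\in\mathbb{N}}\Phi_{-n}\eta_u=\bigvee_{i\in\mathbb{Z}}\Phi_i\xi_u(r_1)\vee\varpi .
\]
That $\eta_u$ is a genuine measurable partition is automatic, since a countable join of measurable partitions generates a countably generated sub-$\sigma$-algebra of $\mathscr{F}\otimes\mathscr{B}_M$, to which a measurable partition corresponds. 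For the generating property $\bigvee_{i\in\mathbb{Z}}\Phi_i\xi_u(r_1)\vee\varpi=\varepsilon_{\Omega\times M}$ I would argue pointwise: suppose $(\omega,x)$ and $(\omega,y)$ lie in a common atom ($\varpi$ already forces the shared $\omega$). If $y\in W^u_{(\omega,x)}$, then for each of the infinitely many $n\to+\infty$ with $\Phi_n(\omega,x)\in\mathcal{S}_u(r_1)$ the atom through $\Phi_n(\omega,x)$ is an honest plaque of $W^u_{\Phi_n(\omega,x)}$ containing both $\varphi_n(\omega)x$ and $\varphi_n(\omega)y$; pulling back by $\varphi_n(\omega)^{-1}$ and using the uniform backward-contraction estimate of \Cref{23-09-29-1641}(c) on the block gives $\mathrm{dist}(x,y)\leq C_0\,e^{-n\lambda}\,\diam(M)\to 0$, so $x=y$. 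If instead $y\notin W^u_{(\omega,x)}$, then at any $n$ with $\Phi_n(\omega,x)\in\mathcal{S}_u(r_1)$ the point $\varphi_n(\omega)y$ cannot lie on that plaque (it is a piece of $W^u_{\Phi_n(\omega,x)}$, and $\varphi_n(\omega)y$ on it would force $y\in W^u_{(\omega,x)}$), so $(\omega,x)$ and $(\omega,y)$ are already separated by $\Phi_n\xi_u(r_1)$, a contradiction.

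The real obstacle is (c), the subordination of $\eta_u$ to the unstable manifolds. One inclusion is immediate: every non-junk atom of $\Phi_i\xi_u(r_1)$ through $(\omega,x)$ is the $\varphi_i$-image of a plaque inside $W^u_{\Phi_{-i}(\omega,x)}$, hence lies in $W^u_{(\omega,x)}$, so $\eta_u^{\omega}(x)\subset W^u_{(\omega,x)}$ for $\mu$-a.e. $(\omega,x)$ (using backward recurrence to $\mathcal{S}_u(r_1)$ to guarantee a non-junk atom occurs). The hard part is the reverse inclusion: that $\eta_u^{\omega}(x)$ contains an open ball $\mathcal{B}^u_{(\omega,x)}(r(\omega,x))$ of positive, measurably varying radius. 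The difficulty is that the junk atom $\Omega\times M\setminus\mathcal{S}_u(r_1)$ of $\xi_u(r_1)$, transported forward by $\Phi_i$, can excise from the unstable plaque through $(\omega,x)$ a sub-plaque accumulating at $x$, and this must be ruled out. This is exactly where $r_1$ has to be chosen small relative to the hyperbolicity rate $\lambda$ and to the quantitative data of the block in \Cref{23-8-11-2111} (the scale $\epsilon$, the Lipschitz constants, the transversality), and where temperedness of $l,C$ is used: for $\mu$-a.e. $(\omega,x)$ only finitely many returns to a neighbourhood of the block can produce a cut at a scale comparable to the $W^u_{(\omega,x)}$-distance from $x$, while the exponential backward contraction along $W^u_{(\omega,x)}$ pushes all later cuts off a fixed ball around $x$, leaving a definite plaque; measurability of $r(\cdot)$ then follows from that of the ingredients. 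This step is standard but genuinely delicate — it is carried out in full in \cite[Chapter IV]{MR1369243} (see also \cite{MR3925383}) — so I would invoke that construction rather than reprove it, which is presumably why the paper itself omits the proof here.
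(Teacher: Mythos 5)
Your proposal is correct and follows essentially the same route as the paper: the paper explicitly omits the proof of this claim as standard, deferring to \cite[Chapter IV]{MR1369243} and \cite{MR3925383}, and your sketch is precisely that Ledrappier--Strelcyn-type construction (recurrence to the Pesin block, formal monotonicity and generation via backward contraction, and the delicate Borel--Cantelli argument for subordination), likewise invoking the same reference at the one genuinely technical step. The only cosmetic deviation is handling the stable case by time reversal rather than symmetrically on the common block $\mathcal{L}(r_1)$, which is harmless for the claim as stated and for its use in \Cref{23-6-27-1553}.
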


%Recall that $\mu=\int_{\Omega}\delta_{\omega}\times\mu_{\omega}\mathrm{d}\mathbb{P}(\omega)$
% is the measure   disintegration with respect to $\alpha$.  

Finally, we are going to state the main result (\Cref{23-6-27-1553}) in this section.    Despite  there is a similar result in     \cite[Theorem C]{MR968818} for i.i.d. RDS as \Cref{23-6-27-1553},  we are dealing  with general RDSs.   For  sake of convenience,   we introduce two simplified notations:  
\begin{itemize}
 \item  given  a measurable partition $\eta$ of $(\Omega\times M,\mathscr{F}\otimes\mathscr{B}_M,\mu)$ with $\eta\succ\varpi$,   for any  $\omega\in\Omega$  denote 
$\eta^{\omega}$ as  projection measurable partition on $M$, i.e. 
\begin{align}
\eta^{\omega}=\{A\in\mathscr{B}_M: \{\omega\}\times A\in\eta\};
\end{align}
\item given a Borel measurable subset $B$ of $\Omega\times M$,  for any  $\omega\in\Omega$  denote $B^{\omega}$ as  projection set, i.e. 
\begin{align}
B^{\omega}=\{x\in M: (\omega, x)\in B\}.
\end{align}
\end{itemize}
\begin{definition}
\label{23-8-13-1643}
An ergodic  measure $\mu$ of RDS $\varphi$ is called a  SRB measure if  for any measurable partition $\eta$ which is subordinated to the stable manifolds of $(\varphi,\mu)$,  one has that 
$$(\mu_{\omega})^{\eta^{\omega}}_x\ll \text{vol}_{(\omega,x)}^{s}$$
for $\mu$-a.s. $(\omega,x)\in\Omega\times M$, where $\{(\mu_{\omega})^{\eta^{\omega}}_x\}_{x\in M}$  is the disintegration of $\mu_{\omega}$ with respect to $\eta^{\omega}$, and $\text{vol}_{(\omega,x)}^{s}$ is the volume measure on $W^{s}_{(\omega,x)}$ induced by its inherited Riemannian metric as a submanifold of $M$.
\end{definition}

\begin{proposition}
\label{23-6-27-1553}
Assume that $(\varphi,\mu)$ is a  measure-preserving RDS on $M$ over a metric system $(\Omega,\mathscr{F},\mathbb{P},(\theta_n)_{n\in\mathbb{Z}})$ satisfying  (\hyperref[23-9-29-1456-1]{A1})-(\hyperref[23-9-29-1456-6]{A6}). If $\mu$ is a  SRB measure,  then  $(\varphi,\mu)$ is a  $K$-system.
\end{proposition}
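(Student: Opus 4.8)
The plan is to transplant Pesin's spectral-decomposition argument to the random setting, feeding the subordinated partitions of \Cref{23-7-5-1453} into the abstract Pinsker criterion \Cref{23-6-29-1659}. Note first that $\varpi\prec\mathcal{P}_\mu$ always holds (every element of $\widehat\varpi$ generates a two-element partition of zero $\varphi$-entropy, since its $\Phi$-iterates remain $\widehat\varpi$-measurable and hence $\delta_\omega\times\mu_\omega$-trivial), so it suffices to prove the reverse inequality $\mathcal{P}_\mu=\varpi$. I would fix $r_1>0$ and the partitions $\eta_u,\eta_s$ given by \Cref{23-7-5-1453}. Observing that the time-reversed RDS, which induces $\Phi_{-1}$ on $\Omega\times M$, is again a measure-preserving RDS with invariant measure $\mu$, hence with the same entropy and the same Pinsker partition, I would apply \Cref{23-6-29-1659} to this reversed RDS with the partition $\eta_u$ (which satisfies $\varpi\prec\eta_u\prec\Phi_{-1}\eta_u$ and $\bigvee_n\Phi_{-n}\eta_u=\varepsilon_{\Omega\times M}$), and apply \Cref{23-6-29-1659} to $\varphi$ itself with $\eta_s$. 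This yields the two containments $\bigwedge_{n\in\mathbb{N}}\Phi_n\eta_u\succ\mathcal{P}_\mu$ and $\bigwedge_{n\in\mathbb{N}}\Phi_{-n}\eta_s\succ\mathcal{P}_\mu$.

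The next step is to identify these two remote-past partitions geometrically. Since $\eta_u$ is subordinated to the unstable manifolds, each of its atoms contains an unstable ball $\mathcal{B}^u_{(\omega,x)}(r(\omega,x))$ with $r$ positive and, after a harmless modification, tempered. The atom of $\bigwedge_n\Phi_n\eta_u$ through $(\omega,x)$ equals $\bigcup_{n\geq 0}\Phi_n\big(\eta_u(\Phi_{-n}(\omega,x))\big)$, a nested union of subsets of $\{\omega\}\times W^u_{(\omega,x)}$; since $\varphi_n(\theta_{-n}\omega)$ maps the above balls onto subsets of $W^u_{(\omega,x)}$ that grow at a definite exponential rate (using property (d) of \Cref{23-09-29-1641} and temperedness of $r$ along the orbit), this atom is exactly $\{\omega\}\times W^u_{(\omega,x)}$ for $\mu$-a.e.\ $(\omega,x)$; symmetrically the atoms of $\bigwedge_n\Phi_{-n}\eta_s$ are $\{\omega\}\times W^s_{(\omega,x)}$. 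Combining this with the containments above, every $\mathscr{P}_\mu(\varphi)$-measurable function is, $\mu$-a.e., constant along whole unstable manifolds and along whole stable manifolds.

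Finally I would run a Hopf-type argument on a positive-measure Pesin block $\Gamma_1$ as in \Cref{23-8-11-2111}, on which the local unstable and stable manifolds have uniformly bounded size, vary continuously and are mutually transverse, so that the local bracket $[y,z]=W^{s}_{\mathrm{loc}}(y)\cap W^{u}_{\mathrm{loc}}(z)$ is defined and sweeps out honest rectangles. The SRB hypothesis — equivalently, via \Cref{23-8-15-1444} and \Cref{23-8-13-1921}, Pesin's entropy formula — provides absolutely continuous conditionals of $\mu_\omega$ along the invariant manifolds, and the Pesin-theoretic absolute continuity of the holonomies then gives $\mu_\omega$ a local product structure on each such rectangle with absolutely continuous factors. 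Consequently a $\mathcal{P}_\mu$-atom $A^\omega$, being saturated by both foliations, either is $\mu$-null in the fibre or contains, mod $\mu_\omega$, a full rectangle around $\mu_\omega$-a.e.\ of its points, i.e.\ is relatively open in $\operatorname{supp}\mu_\omega$. Because $\mathcal{P}_\mu$ is $\Phi_1$-invariant and $(\varphi,\mu)$ is ergodic, this places us in Pesin's spectral decomposition: $\Phi_1$ cyclically permutes finitely many such relatively open pieces and is a $K$-system on each after passing to a suitable power, and the total ergodicity hypothesis (A6) forces the number of pieces to be one. Hence $\mathcal{P}_\mu=\varpi$, that is, $(\varphi,\mu)$ is a $K$-system.

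The step I expect to be the main obstacle is the last one: turning ``the Pinsker atoms are saturated by both foliations'' into ``they are essentially relatively open in $\operatorname{supp}\mu_\omega$''. This requires the absolute continuity of the stable and unstable holonomies and the SRB property in the quantitative form of a local product structure with absolutely continuous conditionals, with all estimates uniform in the base point $\omega$ — which is exactly where one must invoke the temperedness of the functions $l,C$ from \Cref{23-09-29-1641} and confine the whole analysis to a positive-measure Pesin block. A secondary, more routine difficulty is to carry Pesin's spectral decomposition through in the RDS language and to verify that (A6) is precisely what removes the cyclic permutation, so that one obtains the $K$-property for $\Phi_1$ rather than merely for some iterate $\Phi_n$.
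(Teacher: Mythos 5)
Your proposal follows essentially the same route as the paper: the subordinated partitions $\eta_s,\eta_u$ of \Cref{23-7-5-1453} fed into \Cref{23-6-29-1659} (forwards and for the time-reversed system) to get $\mathcal{P}_\mu\prec\eta_s\wedge\eta_u$, the SRB property plus absolute continuity of holonomies to give the resulting atoms positive fibre measure, and then a finite cyclic (spectral) decomposition that total ergodicity (A6) collapses to a single piece. The only difference is that the paper stops at ``Pinsker atoms have positive $\mu_\omega$-measure'' and invokes Parry's countable-atom/cyclic-permutation lemma, rather than upgrading to relative openness in $\operatorname{supp}\mu_\omega$ via a full local product structure as you propose — your extra step is not needed and is the hardest part of your sketch, but the argument you outline is the paper's.
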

\begin{proof}
Let  $\eta_s$  and $\eta_u$ be two measurable partitions defined as  \Cref{23-7-5-1453}.  Due to the properties  in \Cref{23-8-11-2111} and SRB property of  $\mu$, it is easy to see that there exists a positive $\mu$-measure compact subset $\mathcal{L}_1$ of $\mathcal{L}(r_1)$  and $r_2>0$ such that for every $(\omega,x)\in\mathcal{L}_1$
\begin{itemize}
%\item[(H1)]\label{23-9-2-1530-1} there is a $\mathbb{P}$-measure positive  set $\Omega_1$ such that $\mu_{\omega}(\mathcal{L}^{\omega}_1)>0$ for any $\omega\in\Omega_1$;
\item [(H1)]\label{23-9-2-1530-2} for any $\tau\in\{s,u\}$ one has that  $\mathcal{B}^{\tau}_{(\omega,x)}(r_2)\subset \eta_{\tau}(\omega,x)$,  where $\mathcal{B}^{\tau}_{(\omega,x)}(r_2)$ is the open ball at $(\omega,x)$ with the radius $r_2$ on the immersed manifold $W^{\tau}_{(\omega,x)}$;
\item[(H2)]\label{23-9-2-1530-3} $(\mu_{\omega})^{\eta_s^{\omega}}_x(\mathcal{B}^{s}_{(\omega,x)}(r_2))>0$.
\end{itemize}
 
According to the absolutely continuous theorem (see \cite[Chapter III  \S 5]{MR1369243}) and (\hyperref[23-9-2-1530-2]{H1})-(\hyperref[23-9-2-1530-3]{H2}),  there exists a positive  $\mu$-measure subset  $\mathcal{L}_2$ of $\mathcal{L}_1$   such that for any $(\omega,x)\in\mathcal{L}_2$ there is a positive $\mu_{\omega}$-measure    subset $\mathcal{B}_{(\omega,x)}$ of  $\bigcup_{y\in \eta^{\omega}_s( x)} \eta^{\omega}_{u}(y)$.   Therefore,   for every $(\omega,x)\in\mathcal{L}_2$ one has that  $\mu_{\omega}\big((\eta_u\wedge\eta_s)^{\omega}(x)\big)>0$ by using the fact 
\begin{align*}
\eta_s\wedge\eta_u(\omega, x)\supset\bigcup_{y\in \eta^{\omega}_s( x)}\{\omega\}\times \eta^{\omega}_{u}(y).
\end{align*}

Recall that $\mathcal{P}_{\mu}$ is the  Pinsker partition of measure-preserving  RDS $(\varphi,\mu)$.  By  \Cref{23-6-29-1659}, one has that  
  $$\mathcal{P}_{\mu}\prec\bigwedge_{n\in\mathbb{N}}\Phi_{-n}\eta_{s}\text{ and } \mathcal{P}_{\mu}\prec\bigwedge_{n\in\mathbb{N}}\Phi_{n}\eta_{u},$$
   which implies that $\mathcal{P}_{\mu}\prec\eta_s\wedge\eta_u$,  Therefore,  for any  $(\omega, x)\in \mathcal{L}_2$ one has that
  $$\mu_{\omega}\big(\mathcal{P}_{\mu}^{\omega}( x)\big)>0.$$
    By the invariance of $\mathcal{P}_{\mu}$ and  ergodicity of the measure $\mu$,  we have that $\mu_{\omega}\big(\mathcal{P}_{\mu}^{\omega}( x)\big)>0$ for $\mu$-a.s. $(\omega,x)\in\Omega\times M$.  Therefore,    the projective partition $\mathcal{P}_{\mu}^{\omega}$ has at most countable positive $\mu_{\omega}$-measure element  element for $\mathbb{P}$-a.s. $\omega\in\Omega$.  According to \cite[Corollary 4.30]{MR0262464}, there exists a finite Borel measurable partition $\gamma=\{A_1, \dots, A_m\}$ of $\Omega\times M$ such that
    $$\Phi_{j-1}(A_1)=A_j \text{ for any }j=1,\dots,m \quad\text{and}\quad \gamma\vee\varpi=\mathcal{P}_{\mu}.$$  Since  $(\varphi,\mu)$ is totally ergodic and $\Phi_m(A_1)=A_1$, we have that $A_1=\cdots= A_m=\Omega\times M\pmod\mu$. Therefore,  $(\varphi,\mu)$ is a $K$-system. 

\end{proof}
\section{Proof of  \Cref{23-7-4-2042}}
\label{23-8-22-1709}
In this section, we will mainly verify the invertible extension of Lagrangian flow  is a  $K$-system by  borrowing \Cref{23-6-27-1553}.  Combining this property and \Cref{23-7-8-2238},  we  prove   that the Lagrangian flow has observable full-horseshoes.
\subsection{Totally ergodic property}
In this subsection, we mainly  explain that the time-1 map of  Lagrangian flow  induces a totally ergodic discrete RDS.   Recall that  $(\Omega,\mathscr{F}, \mathbb{P})$  is the infinite-dimensional  Wiener space which is defined  as  \eqref{23-8-28-1017}. Now, we define  Wiener shift $(\theta_t)_{t\in\mathbb{R}_+}$ on it as 
\begin{align}
\label{23-8-28-1025}
\theta_t(\omega)=(\omega_k(t+\cdot)-\omega_k(t))_{k\in\mathbb{Z}_0^2},
\end{align}
where $\mathbb{Z}_0^2=\{n\in\mathbb{Z}^2: n\neq(0,0)\}$.  Following the classical argument  of  Bernoulli system being ergodic (for example, see \cite[Proposition 2.15]{EW}),  the  reader can  prove that the  Wiener measure $\mathbb{P}$  is $(\theta_{\tau t})_{t\in\mathbb{R}_+}$-ergodic  for any   $\tau>0$.   In the following,  we  use  $(u_t)_{t\in\mathbb{R}_+}$  and $(\varphi_t)_{t\in\mathbb{R}_+}$  to represent the  velocity flow   generated by \Cref{23-8-4-1509} and  the  Lagrangian  flow 
generated by \Cref{23-7-4-1717}, respectively.  
\begin{lemma}
 Recall that $\mu$ is the unique stationary measure of velocity flow defined in \Cref{23-9-2-2037}.  Then $u_t: \Omega\times \mathbb{H}\to \mathbb{H}$ is a continuous   RDS on $\mathbb{H}$ over the metric system $(\Omega,\mathscr{F},\mathbb{P}, (\theta_t)_{t\in\mathbb{R}_+})$ and $\mathbb{P}\times\mu$ is an invariant measure of RDS $(u_t)_{t\in\mathbb{R}_+}$.  Furthermore,  $\varphi_t$ is a $C^2$ RDS  on $\mathbb{T}^2$ over the metric system $(\Omega\times\mathbb{H},\mathscr{F}\otimes\mathscr{B}_{\mathbb{H}},\mathbb{P}\times\mu, (\theta_t\times u_t)_{t\in\mathbb{R}_+})$.    
 \end{lemma}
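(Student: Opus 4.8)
The plan is to verify, in order, the three axioms of a continuous-time RDS — joint measurability, the time-zero identity, and the perfect cocycle relation — first for the velocity flow $(u_t)$ and then, over the enlarged base, for the Lagrangian flow $(\varphi_t)$, deducing invariance of $\mathbb{P}\times\mu$ along the way. For $(u_t)$, joint measurability of $(t,\omega,u)\mapsto u_t(\omega,u)$ and continuity in $(t,u)$ are contained in \Cref{23-9-2-2037}(1), and $u_0(\omega,u)=u$ is the prescribed initial datum in \Cref{23-7-4-1717}. Since the Wiener measure $\mathbb{P}$ is $(\theta_t)$-invariant (the Bernoulli-type fact recorded just above), $(\Omega,\mathscr{F},\mathbb{P},(\theta_t)_{t\in\mathbb{R}_+})$ is a metric system, so it remains to establish the cocycle identity $u_{t+s}(\omega)=u_t(\theta_s\omega)\circ u_s(\omega)$ on a single $(\theta_t)$-invariant $\mathbb{P}$-full set.

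For the cocycle, fix $s\geq 0$ and note that, by \eqref{23-8-28-1025}, the process $t\mapsto u_{t+s}(\omega,u)$ solves \Cref{23-7-4-1717} with initial value $u_s(\omega,u)$ and driving noise $(W^k_{t+s}-W^k_s)_k=(\theta_s\omega)_k$; by the uniqueness part of \Cref{23-9-2-2037}(1) this equals $u_t(\theta_s\omega,u_s(\omega,u))$ for all $t\geq 0$ and $\mathbb{P}$-a.e.\ $\omega$, for each fixed $s$. The standard perfection procedure for crude cocycles (see \cite{A}) then replaces the $s$-dependent exceptional sets by one $(\theta_t)$-invariant $\mathbb{P}$-null set off which the identity holds for all $s,t\geq 0$ simultaneously, making $(u_t)$ a one-sided RDS over $(\Omega,\mathscr{F},\mathbb{P},(\theta_t)_{t\in\mathbb{R}_+})$.

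For invariance of $\mathbb{P}\times\mu$ under the skew product $\Phi_t(\omega,u):=(\theta_t\omega,u_t(\omega)u)$ — the map written $(\theta_t\times u_t)$ in the statement — I would test against $f(\omega,u)=g(\omega)h(u)$ with $g,h$ bounded measurable. Since $u_t(\omega)u$ depends only on $\{W^k_r:r\leq t\}$ while $\theta_t\omega$ depends only on $\{W^k_{t+r}-W^k_t:r\geq 0\}$, and these $\sigma$-algebras are $\mathbb{P}$-independent,
\[
\int g(\theta_t\omega)\,h(u_t(\omega)u)\,\mathrm{d}\mathbb{P}(\omega)\,\mathrm{d}\mu(u)=\Big(\int g\,\mathrm{d}\mathbb{P}\Big)\Big(\int P_th\,\mathrm{d}\mu\Big)=\Big(\int g\,\mathrm{d}\mathbb{P}\Big)\Big(\int h\,\mathrm{d}\mu\Big),
\]
where $P_t$ is the Markov semigroup of the velocity process and the last equality is the stationarity of $\mu$ from \Cref{23-9-2-2037}(1). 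A monotone-class argument extends this to all bounded measurable $f$, so $(\Phi_t)_*(\mathbb{P}\times\mu)=\mathbb{P}\times\mu$; as its $\Omega$-marginal is $\mathbb{P}$, the product $\mathbb{P}\times\mu$ is an invariant measure of the RDS $(u_t)$ and $(\Omega\times\mathbb{H},\mathscr{F}\otimes\mathscr{B}_{\mathbb{H}},\mathbb{P}\times\mu,(\Phi_t)_{t\in\mathbb{R}_+})$ is a metric system.

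Finally, I would rerun the same scheme one level up, now over that metric system. Joint measurability of $(t,\omega,u,x)\mapsto\varphi_t(\omega,u)x$, continuity of $t\mapsto\varphi_t(\omega,u)\in\mathrm{Diff}^2(\mathbb{T}^2)$, hence $C^2$-ness of each $\varphi_t(\omega,u)$, and $\varphi_0(\omega,u)=\mathrm{Id}_{\mathbb{T}^2}$ are all supplied by \Cref{23-9-2-2037}(2) together with \Cref{23-8-4-1509}. For the cocycle relation $\varphi_{t+s}(\omega,u)=\varphi_t(\theta_s\omega,u_s(\omega)u)\circ\varphi_s(\omega,u)$, fix $x\in\mathbb{T}^2$: the curve $t\mapsto\varphi_{t+s}(\omega,u)x$ solves the Carath\'eodory ODE $\dot{y}_t=u_{t+s}(\omega,u)(y_t)$ with initial value $\varphi_s(\omega,u)x$, and — after rewriting $u_{t+s}(\omega,u)=u_t(\theta_s\omega)u_s(\omega)u$ via the cocycle identity for $(u_t)$ — so does $t\mapsto\varphi_t(\theta_s\omega,u_s(\omega)u)\big(\varphi_s(\omega,u)x\big)$; uniqueness of Carath\'eodory solutions (\cite[Chapter 2]{A}) forces the two to agree, and one more perfection step makes this hold on a single $(\Phi_t)$-invariant $\mathbb{P}\times\mu$-full set. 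I expect the genuine difficulty to lie not in any analytic estimate — well-posedness, $C^2$-regularity and unique ergodicity are already packaged in \Cref{23-9-2-2037} — but in the careful handling of the time-dependent exceptional null sets, that is, in carrying out the perfection so that the cocycle identities for $(u_t)$ and for $(\varphi_t)$ hold simultaneously for all times on one invariant full-measure set.
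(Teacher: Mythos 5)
Your argument is correct and is essentially the paper's own proof with the black boxes opened: the paper simply cites \cite[Chapter 2.4.4]{MR3443633} for the RDS structure of $(u_t)$, \cite[Theorem 2.1.7]{A} for the passage from the stationary measure $\mu$ to the invariant measure $\mathbb{P}\times\mu$ (whose standard proof is exactly your independence-of-increments computation), and \cite[Theorem 2.2.2]{A} for the $C^2$ RDS generated by the random ODE \eqref{23-8-4-1509} (whose proof is your Carath\'eodory-uniqueness plus perfection argument). No substantive difference in approach.
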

 \begin{proof}
 According to  the argument in \cite[Chapter 2.4.4]{MR3443633}, we know that  $(u_t)_{t\geq0}$ is a continuous  RDS on $\mathbb{H}$ over the metric system $(\Omega,\mathscr{F},\mathbb{P}, (\theta_t)_{t\in\mathbb{R}_+})$.   By that and \cite[Theorem 2.1.7]{A}, one has that  $\mathbb{P}\times\mu$ is an invariant measure of RDS $(u_t)_{t\in\mathbb{R}_+}$.  Since $\varphi_t$ is the solution of   a random ordinary differential equation of \eqref{23-8-4-1509} , $(\varphi_t)_{t\in\mathbb{R}_+}$ is a $C^2$ RDS on $\mathbb{T}^2$ over the metric system $(\Omega\times\mathbb{H},\mathscr{F}\otimes\mathscr{B}_{\mathbb{H}},\mathbb{P}\times\mu, (\theta_t\times u_t)_{t\in\mathbb{R}_+})$ by \cite[Theorem 2.2.2]{A}.
 \end{proof}
 Denote  $(\mathscr{U}_t)_{t\in\mathbb{R}_+}$ and $(\Phi_t)_{t\in\mathbb{R}_+}$ as the measurable maps generated by $(u_t)_{t\in\mathbb{R}_+}$ and $(u_t,\varphi_t)_{t\in\mathbb{R}_+}$  as  \eqref{23-8-15-1253}.  Letting  $\text{vol}$ be the volume measure on $\mathbb{T}^2$,  then we have following lemma.
\begin{lemma}
\label{23-8-15-1431}
 For any $m\in\mathbb{N}$,     $(\Omega\times\mathbb{H}\times\mathbb{T}^2,  \mathbb{P}\times\mu\times\text{vol}, (\mathscr{U}_{nm})_{n\in\mathbb{Z}_+})$    is an ergodic measure-preserving dynamical system.  Particularly,  the time-1 map of RDS  $(\varphi, \mathbb{P}\times\mu\times\text{vol}) $  generates a totally ergodic measure-preserving  RDS $(\varphi^{(1)}, \mathbb{P}\times\mu\times\text{vol})$ on  $\mathbb{T}^2$ over the metric system  $(\Omega\times \mathbb{H}, \mathbb{P}\times\mu,(\Phi_{n})_{n\in\mathbb{Z}_+})$ as 
 $$\varphi^{(1)}: \mathbb{Z}_+\times\Omega\times \mathbb{H}\times\mathbb{T}^2\to\mathbb{T}^2,\quad (n, (\omega,u), x)\mapsto \varphi_n(\omega,u)x.$$.
\end{lemma}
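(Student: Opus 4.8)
The plan is to bootstrap the statement from the unique ergodicity of \Cref{23-9-2-2037} together with the $\theta_m$-ergodicity of the Wiener measure recalled above, through the standard dictionary between invariant measures of a white-noise RDS and stationary measures of its Markovization. I would first record that $\mathbb{P}\times\mu\times\text{vol}$ is $\Phi_t$-invariant for every $t\geq 0$: since $(u_t,\varphi_t)_{t\geq0}$ is a Markov process with stationary measure $\mu\times\text{vol}$ by \Cref{23-9-2-2037}, the same argument that gives invariance of $\mathbb{P}\times\mu$ in the previous lemma (see also \cite{A}) shows that $\mathbb{P}\times\mu\times\text{vol}$ is invariant under the induced skew product $\Phi_t$. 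Hence the whole content of the lemma is that, for each fixed $m\in\mathbb{N}$, the transformation $\Phi_m$ is ergodic with respect to $\mathbb{P}\times\mu\times\text{vol}$; the ``particularly'' clause is then a reformulation, because $\varphi^{(1)}$ is by definition the RDS obtained by iterating the time-$1$ map $\varphi_1$, its base is $(\Omega\times\mathbb{H},\mathbb{P}\times\mu,(\mathscr{U}_n)_{n\in\mathbb{Z}_+})$, its induced skew-product transformation is exactly $(\Phi_n)_{n\in\mathbb{Z}_+}$, and total ergodicity of $(\varphi^{(1)},\mathbb{P}\times\mu\times\text{vol})$ means precisely that $\Phi_m$ is ergodic for all $m$.

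To prove that $\Phi_m$ is ergodic I would reduce to the discrete-time Markov chain $(u_{nm},\varphi_{nm})_{n\in\mathbb{Z}_+}$ on $\mathbb{H}\times\mathbb{T}^2$. Since the RDS $(\omega,u,x)\mapsto(u_t(\omega,u),\varphi_t(\omega,u)x)$ is driven by the white-in-time forcing $\dot W$, the pieces $W|_{[nm,(n+1)m]}$, $n\geq 0$, are independent and identically distributed, and the RDS--Markov correspondence (\cite{A}) identifies the $\Phi_m$-invariant probability measures having $\Omega$-marginal $\mathbb{P}$ with the stationary probability measures of this chain; in particular, if the chain is uniquely ergodic then $\mathbb{P}\times\mu\times\text{vol}$ is the only $\Phi_m$-invariant measure whose $\Omega$-marginal is $\mathbb{P}$. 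Moreover, any decomposition $\mathbb{P}\times\mu\times\text{vol}=c\rho_1+(1-c)\rho_2$ into $\Phi_m$-invariant probability measures projects to a decomposition of $\mathbb{P}$ into $\theta_m$-invariant ones, which is trivial because $\mathbb{P}$ is $\theta_m$-ergodic; hence $\rho_1$ and $\rho_2$ both have $\Omega$-marginal $\mathbb{P}$ and therefore coincide with $\mathbb{P}\times\mu\times\text{vol}$. Thus it suffices to show that $(u_{nm},\varphi_{nm})_{n\in\mathbb{Z}_+}$ has a unique stationary probability measure.

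This discrete-time unique ergodicity is the heart of the matter and the step I expect to be the main obstacle: it does \emph{not} follow formally from the continuous-time unique ergodicity of \Cref{23-9-2-2037}, since a uniquely ergodic semiflow may have badly non-ergodic time-$m$ maps (e.g.\ the flow $x\mapsto x+t$ on the circle, whose time-one map is the identity). To obtain it I would appeal to the mixing of the continuous-time process: $(u_t,\varphi_t)$ is (exponentially) mixing towards $\mu\times\text{vol}$, i.e.\ its transition semigroup satisfies $P_t\lambda\to\mu\times\text{vol}$ weakly for every initial distribution $\lambda$ --- this is part of the ergodic theory developed for the Lagrangian flow in \cite{MR4404792}, refining the unique ergodicity obtained from the asymptotic strong Feller property and irreducibility as in \cite{MR1346374,MR2770903} (alternatively, one re-runs those arguments directly for the time-$m$ chain, whose driving increments are i.i.d.\ with full support). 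Given the mixing, any $P_m$-stationary probability measure $\rho$ satisfies $\rho=P_{mn}\rho\to\mu\times\text{vol}$ as $n\to\infty$, hence $\rho=\mu\times\text{vol}$; this closes the reduction and shows that $\Phi_m$ is ergodic for every $m\in\mathbb{N}$. Finally, dropping the $\mathbb{T}^2$-coordinate, the identical reasoning shows that the velocity chain $(u_{nm})_n$ is uniquely ergodic, so $\mathscr{U}_m$ is ergodic with respect to $\mathbb{P}\times\mu$ and the base of $\varphi^{(1)}$ is itself a totally ergodic metric system, which completes the ``particularly'' assertion.
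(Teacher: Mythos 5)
Your reduction is the same as the paper's: both arguments funnel the statement through unique ergodicity of the time-$m$ Markov chain $(\mathcal{P}_{mn})_{n\in\mathbb{Z}_+}$ on $\mathbb{H}\times\mathbb{T}^2$ and then transfer extremality of the stationary measure to ergodicity of the skew product via the Markov--RDS correspondence (the paper cites \cite[Theorem 3.2.6]{MR1417491} together with \cite[Theorem 2.1.7]{A} for exactly this step). Where you diverge is in how the discrete-time unique ergodicity is obtained. The paper applies the Hairer--Mattingly criterion directly to the skeleton semigroup: weak irreducibility (from \cite[Lemma 7.3]{MR4404792}) plus the strong Feller property of $(\mathcal{P}_t)_{t\in\mathbb{R}_+}$ yield, via \cite[Corollary 3.17]{MR2478676}, that $\mu\times\text{vol}$ is the unique stationary measure of $(\mathcal{P}_{mn})_{n\in\mathbb{Z}_+}$ for every $m$. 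You instead invoke exponential mixing of the continuous-time process from \cite{MR4404792} and conclude $\rho=\mathcal{P}_{mn}\rho\to\mu\times\text{vol}$ for any $\mathcal{P}_m$-stationary $\rho$; this is also valid and arguably more robust (it needs only convergence of $\mathcal{P}_t\lambda$, not a Feller-type regularity of the skeleton), at the cost of importing a quantitatively stronger input than uniqueness. Your observation that continuous-time unique ergodicity alone does not suffice is exactly the point of the lemma, and your circle-rotation counterexample is the right sanity check. One small caution: the set of $\Phi_m$-invariant measures with $\Omega$-marginal $\mathbb{P}$ is not a priori identified with the stationary measures of the chain --- the correspondence in \cite{A} is between stationary measures $\rho$ and the \emph{product} measures $\mathbb{P}\times\rho$ --- so the step ``both components have marginal $\mathbb{P}$, hence coincide with $\mathbb{P}\times\mu\times\text{vol}$'' should be routed through the extremality-implies-ergodicity statement for white-noise RDS (precisely the combination of \cite[Theorem 3.2.6]{MR1417491} and \cite[Theorem 2.1.7]{A} that the paper uses) rather than through the bare marginal condition.
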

\begin{proof}
 Denote $(\mathcal{P}_t)_{t\in\mathbb{R}_+}$ as  the Markov semigroup  generated by $(u_t, \varphi_t)_{t\in\mathbb{R}_+}$.  According to the weak irreducibility (see \cite[Lemma 7.3]{MR4404792}) and strong Feller  property of $(\mathcal{P}_t)_{t\in\mathbb{R}_+}$,  one has that $\mu\times\text{vol}$ is also unique stationary measure for Markov semigroup $(\mathcal{P}_{mn})_{n\in\mathbb{Z}_+}$ for any $m\in\mathbb{N}$ by \cite[Corollary 3.17]{MR2478676}.  Therefore, $(\Omega\times\mathbb{H}\times\mathbb{T}^2, \mathbb{P}\times\mu\times\text{vol},(\mathscr{U}_{nm})_{n\in\mathbb{Z}_+})$    is ergodic  by combining \cite[Theorem 3.2.6]{MR1417491} and \cite[Theorem 2.1.7]{A}.  
 \end{proof}

\subsection{Integrable conditions for Lagrangian flow}

 In this subsection,  we mainly verify  the integral condition \eqref{23-7-5-2042}  for  the Lagrangian flow. Firstly,  we give two useful lemmas. 
 \begin{lemma}
 \label{23-8-7-1611}
Let $f\in \mathbf{H}^s(\mathbb{T}^2; \mathbb{R}^2)$ with $s> 3$.  Then $f\in C^2(\mathbb{T}^2;\mathbb{R}^2)$ and 
 $$|f|_{C^2}\lesssim_{s}\|f\|_{\mathbf{H}^s},$$
 where $a\lesssim_{s}b$ means that there exists a constant $C>0$ only depending on $s$ such that $a\leq Cb$.
 \end{lemma}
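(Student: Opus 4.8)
The plan is to invoke the Fourier characterization of the Sobolev norm on the torus together with the Cauchy--Schwarz inequality, i.e. to prove the classical embedding $\mathbf{H}^s(\mathbb{T}^2)\hookrightarrow C^2(\mathbb{T}^2)$ with the accompanying norm estimate. Working componentwise we may assume $f$ is scalar. Write $f=\sum_{n\in\mathbb{Z}^2}\hat f(n)e^{i2\pi\langle n,x\rangle}$ for its Fourier expansion, so that, up to a fixed constant depending only on the normalization convention, $\|f\|_{\mathbf{H}^s}^2\approx\sum_{n\in\mathbb{Z}^2}(1+|n|^2)^s|\hat f(n)|^2$. For a multi-index $\alpha$ with $|\alpha|\le 2$, differentiating term by term gives $D^\alpha f(x)=\sum_n (i2\pi n)^\alpha\hat f(n)e^{i2\pi\langle n,x\rangle}$, hence
$$\sup_{x\in\mathbb{T}^2}|D^\alpha f(x)|\le (2\pi)^{|\alpha|}\sum_{n\in\mathbb{Z}^2}|n|^{|\alpha|}|\hat f(n)|.$$

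Next I would insert the weight and apply Cauchy--Schwarz:
$$\sum_{n\in\mathbb{Z}^2}|n|^{|\alpha|}|\hat f(n)|\le\Big(\sum_{n\in\mathbb{Z}^2}\frac{|n|^{2|\alpha|}}{(1+|n|^2)^{s}}\Big)^{1/2}\Big(\sum_{n\in\mathbb{Z}^2}(1+|n|^2)^s|\hat f(n)|^2\Big)^{1/2},$$
where the second factor is $\lesssim\|f\|_{\mathbf{H}^s}$. For the first factor I would compare the series with the integral $\int_{\mathbb{R}^2}|\xi|^{2|\alpha|}(1+|\xi|^2)^{-s}\,\mathrm{d}\xi$; passing to polar coordinates, its tail behaves like $\int_1^\infty r^{2|\alpha|-2s+1}\,\mathrm{d}r$, which is finite precisely when $2s>2|\alpha|+2$, i.e. $s>|\alpha|+1$. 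Since $|\alpha|\le 2$ and $s>3$ by hypothesis, this holds, so the first factor equals a finite constant $C_s$ depending only on $s$.

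Combining the two displays yields $\sup_x|D^\alpha f(x)|\le (2\pi)^{|\alpha|}C_s\|f\|_{\mathbf{H}^s}$ for every $|\alpha|\le 2$; in particular the Fourier series of $f$ and of each $D^\alpha f$ with $|\alpha|\le 2$ converge absolutely and uniformly, which legitimizes the term-by-term differentiation and shows $f\in C^2(\mathbb{T}^2;\mathbb{R}^2)$. Taking the maximum over $|\alpha|\le 2$ and over the two components then gives $|f|_{C^2}=\max_{|\alpha|\le 2}\sup_{x}|D^\alpha f(x)|\lesssim_s\|f\|_{\mathbf{H}^s}$. There is essentially no obstacle here: this is a routine special case of Sobolev embedding, and the only point needing a line of care is the convergence of $\sum_n|n|^{2|\alpha|}(1+|n|^2)^{-s}$, namely the inequality $2s-2|\alpha|>2$. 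Alternatively, one could simply cite a standard reference for the embedding $\mathbf{H}^s(\mathbb{T}^2)\hookrightarrow C^k(\mathbb{T}^2)$ valid for $s>k+1$ and apply it with $k=2$.
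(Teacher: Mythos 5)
Your argument is correct. Note, however, that the paper does not actually prove this lemma: it simply observes that the statement is the classical Sobolev embedding $\mathbf{H}^s(\mathbb{T}^2)\hookrightarrow C^2(\mathbb{T}^2)$ for $s>3$ and refers the reader to a standard reference (\L ukaszewicz--Kalita, Theorem 3.13). Your Fourier-series proof supplies the details the paper omits: expanding $f$ in Fourier modes, bounding $\sup_x|D^\alpha f(x)|$ by $\sum_n |n|^{|\alpha|}|\hat f(n)|$, and applying Cauchy--Schwarz with the weight $(1+|n|^2)^{s}$, where the convergence of $\sum_n |n|^{2|\alpha|}(1+|n|^2)^{-s}$ for $|\alpha|\le 2$ requires exactly $s>|\alpha|+1$, i.e.\ $s>3$ --- matching the hypothesis. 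The absolute and uniform convergence of the differentiated series also correctly justifies the term-by-term differentiation and the membership $f\in C^2$. So your proof is a complete, elementary verification of the fact the paper cites; your closing remark that one could instead quote the standard embedding is precisely what the authors do.
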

 This lemma follows from the classical Sobolev embedding theorem. The reader can refer to \cite[Theorem 3.13]{MR3494937} for a  simple case.
 \begin{lemma}[{\cite[Exercise 2.5.4]{MR3443633}}]
 \label{23-8-9-2031}
 Recall that $(u_t)_{t\in\mathbb{R}_+}$ is the  velocity flow   generated by \Cref{23-8-4-1509},  and  $\mu$ is  the unique stationary measure of the velocity flow   $(u_t)_{t\in\mathbb{R}_+}$. Then 
 $$\int_{\mathbb{H}}\|u\|_{\mathbf{H}^s}\mathrm{d}\mu<+\infty.$$
 \end{lemma}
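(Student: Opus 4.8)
The plan is to recognize this as the standard a priori moment bound for the stationary measure of the 2D stochastic Navier--Stokes equation, and to prove it by combining the It\^o formula with the stationarity of $\mu$. By Cauchy--Schwarz it suffices to show $\int_{\mathbb{H}}\|u\|_{\mathbf{H}^s}^2\,\mathrm{d}\mu<+\infty$, and I would obtain this by a bootstrap in the order of regularity, carried out on the scalar vorticity $w=\text{curl}\,u$, which solves $\partial_t w+(u\cdot\nabla)w=\epsilon\Delta w+\text{curl}\,\dot{W}$ and satisfies $\|u\|_{\mathbf{H}^{k+1}}\simeq\|w\|_{\mathbf{H}^k}$ together with $\|u\|_{L^2}\lesssim\|w\|_{L^2}$ (using $\int u\,\mathrm{d}x=\mathbf{0}$); so the target is $\int\|w\|_{\mathbf{H}^{s-1}}^2\,\mathrm{d}\mu<\infty$. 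The mechanism at each stage is that the viscous dissipation buys one more derivative in exchange for the nonlinearity and the noise trace; rigorously I would run the relevant It\^o identities from a deterministic datum with the stopping times $\tau_N=\inf\{t:\|w_t\|_{\mathbf{H}^k}\geq N\}$, obtain uniform-in-time bounds on the time-averaged moments, and pass them to $\mu$ through its Krylov--Bogoliubov construction and the lower semicontinuity of Sobolev norms.

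\emph{Base case.} In the drift of $\|w_t\|_{L^2}^2$ the advection term drops identically, since $\langle(u\cdot\nabla)v,v\rangle_{L^2}=0$ whenever $\text{div}(u)=0$, so this drift equals $-2\epsilon\|\nabla w_t\|_{L^2}^2+\sum_n q_n^2\|\text{curl}\,e_n\|_{L^2}^2$ with finite trace $\sum_n q_n^2\|\text{curl}\,e_n\|_{L^2}^2\simeq\sum_n q_n^2|n|^2<+\infty$ (as $q_n\approx|n|^{-\alpha}$ with $\alpha>s+1>2$). This already gives $\int\|w\|_{\mathbf{H}^1}^2\,\mathrm{d}\mu<\infty$, and feeding the same identity into the classical exponential-supermartingale (Lyapunov) argument gives $\int\|w\|_{L^2}^{2m}\,\mathrm{d}\mu<\infty$ for every $m\in\mathbb{N}$ (indeed an exponential moment).

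\emph{Inductive step.} Suppose $\int\|w\|_{\mathbf{H}^k}^{2m}\,\mathrm{d}\mu<\infty$ for all $m$ and some $0\leq k\leq s-2$. Writing $\Lambda=(-\Delta)^{1/2}$, in the drift of $\|w_t\|_{\mathbf{H}^k}^{2m}$ the noise trace is $\simeq\sum_n q_n^2|n|^{2(k+1)}\approx\sum_n|n|^{2(k+1)-2\alpha}<+\infty$ because $k+2\leq s<\alpha$, while the nonlinear contribution reduces to the commutator $\langle[\Lambda^k,u\cdot\nabla]w,\Lambda^k w\rangle$ (the uncommuted piece again vanishes by $\text{div}(u)=0$). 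I would estimate that commutator using two-dimensional Gagliardo--Nirenberg/Ladyzhenskaya inequalities, distributing derivatives so that the top-order factor $\|w\|_{\mathbf{H}^{k+1}}$ appears only to the first or second power and can be absorbed, via Young's inequality, into the viscous term $-\epsilon\|w\|_{\mathbf{H}^{k+1}}^2$; what remains is a fixed polynomial in the lower norms $\{\|w\|_{\mathbf{H}^i}\}_{i\leq k}$, whose $\mu$-expectation is finite by the inductive hypothesis. This yields $\int\|w\|_{\mathbf{H}^{k+1}}^{2m}\,\mathrm{d}\mu<\infty$ for all $m$, and iterating from $k=0$ up to $k=s-2$ gives $\int\|w\|_{\mathbf{H}^{s-1}}^2\,\mathrm{d}\mu<\infty$, that is $\int\|u\|_{\mathbf{H}^s}^2\,\mathrm{d}\mu<\infty$.

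The only genuine difficulty is the nonlinear estimate in the inductive step: one must split the commutator so that, after absorbing its highest-order factor into the dissipation, the remaining superlinear term involves only strictly lower Sobolev norms raised to bounded powers --- which is precisely why the bootstrap has to propagate moments of every order, and why the base case must supply an exponential rather than merely a second moment. Both of these are possible here only because in two dimensions the transport term is conservative for the $L^2$ norm of $w$ and of its derivatives (up to commutators). Alternatively, this is exactly the content of \cite[Exercise~2.5.4]{MR3443633}, and the above is the scheme of its solution.
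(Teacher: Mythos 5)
The paper offers no proof of this lemma at all: it is quoted verbatim from \cite[Exercise 2.5.4]{MR3443633}, and your sketch is the standard solution of that exercise --- the It\^o/stationarity balance for the vorticity plus a bootstrap in the Sobolev index, with the hypothesis $\alpha>s+1$ supplying exactly the finiteness of the noise trace $\sum_k q_k^2|k|^{2s}$ needed at the top level. Your outline is correct and consistent with the cited source, so there is nothing in the paper itself to compare it against.
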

\begin{proposition}
\label{23-8-9-2100}
Recall that $(\varphi_t)_{t\in\mathbb{R}_+}$ be the Lagrangian flow generated by \Cref{23-8-4-1509}. Then 
\begin{align}
\int_{\Omega\times\mathbb{H}}\big(\log^+\|\varphi_1(\omega,u)\|_{C^2}+\log^+\|\varphi_1(\omega,u)^{-1}\|_{C^2}\big)\mathrm{d}(\mathbb{P}\times\mu)<\infty.
\end{align}
\end{proposition}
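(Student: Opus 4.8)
The plan is to bound the $C^2$-norms of the time-one Lagrangian map and its inverse by an exponential in the $C^2$-norm of the velocity field integrated over the time interval, and then invoke the moment bound for the stationary measure. First I would recall that $\varphi_1(\omega,u)$ solves the ODE $\dot{\varphi}_t = u_t\circ\varphi_t$, so by Gronwall's inequality the spatial derivatives $d_x\varphi_t$ and the second derivatives $d^2_x\varphi_t$ satisfy linear variational equations whose coefficients are $d u_t$ and $d^2 u_t$ evaluated along the flow. Concretely, writing $A_t(x) = d_x\varphi_t$, one has $\dot{A}_t = (d u_t\circ\varphi_t)A_t$, hence $|A_t| \le \exp\!\big(\int_0^t |u_s|_{C^1}\,\mathrm ds\big)$; differentiating once more gives $|d^2_x\varphi_t| \le \big(\int_0^t|u_s|_{C^2}\,\mathrm ds\big)\exp\!\big(C\int_0^t|u_s|_{C^2}\,\mathrm ds\big)$ and similarly for the inverse flow (which solves the same kind of equation with the roles reversed, using that $\varphi_1(\omega,u)^{-1} = \varphi_{-1}$ in the reparametrized sense, or directly estimating $d\varphi_t^{-1} = -(d\varphi_t)^{-1}$ composed appropriately). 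In all cases one obtains
\begin{align*}
\log^+\|\varphi_1(\omega,u)\|_{C^2} + \log^+\|\varphi_1(\omega,u)^{-1}\|_{C^2} \lesssim \int_0^1 |u_t(\omega,u)|_{C^2}\,\mathrm dt + \log^+\!\Big(\int_0^1 |u_t(\omega,u)|_{C^2}\,\mathrm dt\Big),
\end{align*}
so it suffices to show $\int_{\Omega\times\mathbb H}\int_0^1 |u_t(\omega,u)|_{C^2}\,\mathrm dt\,\mathrm d(\mathbb P\times\mu) < \infty$.

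Next I would control the integrand using the Sobolev embedding of \Cref{23-8-7-1611}: since $s\ge 4 > 3$, we have $|u_t|_{C^2}\lesssim_s \|u_t\|_{\mathbf H^s}$. Therefore
\begin{align*}
\int_{\Omega\times\mathbb H}\int_0^1 |u_t(\omega,u)|_{C^2}\,\mathrm dt\,\mathrm d(\mathbb P\times\mu) \lesssim_s \int_0^1 \int_{\Omega\times\mathbb H}\|u_t(\omega,u)\|_{\mathbf H^s}\,\mathrm d(\mathbb P\times\mu)\,\mathrm dt,
\end{align*}
using Tonelli's theorem. Now the key point is that $\mathbb P\times\mu$ is an invariant measure of the RDS $(u_t)_{t\in\mathbb R_+}$ (established in the previous lemma), which means the law of $u_t(\cdot,\cdot)$ under $\mathbb P\times\mu$ has $\mu$ as its $\mathbb H$-marginal for every $t\ge 0$. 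Hence $\int_{\Omega\times\mathbb H}\|u_t(\omega,u)\|_{\mathbf H^s}\,\mathrm d(\mathbb P\times\mu) = \int_{\mathbb H}\|u\|_{\mathbf H^s}\,\mathrm d\mu$ for each $t$, and this is finite by \Cref{23-8-9-2031}. Integrating the constant over $t\in[0,1]$ keeps it finite, which completes the proof.

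The main obstacle I anticipate is the first step: justifying the Gronwall-type $C^2$ estimate rigorously, in particular handling the inverse flow and making sure the constants are genuinely deterministic (depending only on $s$ and the dimension), and that the variational equations for $d_x\varphi_t$ and $d^2_x\varphi_t$ are valid for almost every trajectory given only the $C([0,\infty);\mathbb H)$-regularity of $u_t$ — this is where one uses that $u_t \in \mathbf H^s$ with $s\ge 4$ gives $u_t \in C^2$ with locally Lipschitz second derivatives (indeed $s>3$ already gives $C^2$, and a bit more regularity gives the Lipschitz bound needed for the second variational equation), so the ODE for the 2-jet of $\varphi_t$ has a well-defined Carathéodory solution. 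A secondary technical point is that one should take $\log^+$ inside the time integral carefully: since $x\mapsto \log^+ x$ is subadditive up to a constant and $\int_0^1|u_t|_{C^2}\,\mathrm dt \le \sup_{t\in[0,1]}|u_t|_{C^2}$, one can instead bound $\log^+\|\varphi_1\|_{C^2}$ by $C\int_0^1|u_t|_{C^2}\,\mathrm dt$ directly (the logarithm of an exponential), avoiding the $\log^+$ of an integral altogether, which is cleaner. Everything else is a routine application of Tonelli, invariance of $\mathbb P\times\mu$, and the cited moment and embedding lemmas.
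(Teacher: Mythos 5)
Your proposal is correct, and the first half --- the Gronwall estimates on the $1$- and $2$-jets of $\varphi_t$ via the variational equations, the reduction of everything to $\int_0^1|u_t(\omega,u)|_{C^2}\,\mathrm{d}t$, the Sobolev embedding of \Cref{23-8-7-1611}, and the analogous treatment of the inverse flow through its own integral equation --- is exactly what the paper does. The genuine difference is in the final integration over $\Omega\times\mathbb{H}$. The paper fixes $u\in\mathbb{H}$ and invokes the pathwise moment estimate $\int_{\Omega}\sup_{t\in[0,1]}\|u_t(\omega,u)\|_{\mathbf{H}^{s}}\,\mathrm{d}\mathbb{P}\lesssim_{s}1+\|u\|_{\mathbf{H}^{s}}$ from \cite[Proposition A.3]{MR4404792}, obtaining the fibered bound $\int_{\Omega}\log^+\|\varphi_1(\omega,u)\|_{C^2}\,\mathrm{d}\mathbb{P}\lesssim_s 1+\|u\|_{\mathbf{H}^s}$ before integrating in $\mu$ via \Cref{23-8-9-2031}. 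You instead apply Tonelli and the stationarity of $\mathbb{P}\times\mu$ under the velocity RDS to write $\int_{\Omega\times\mathbb{H}}\int_0^1\|u_t\|_{\mathbf{H}^s}\,\mathrm{d}t\,\mathrm{d}(\mathbb{P}\times\mu)=\int_{\mathbb{H}}\|u\|_{\mathbf{H}^s}\,\mathrm{d}\mu$, again finishing with \Cref{23-8-9-2031}. Both are valid; your route is softer and more self-contained (it needs only the invariance of $\mathbb{P}\times\mu$, established in the preceding lemma for all real $t$, plus joint measurability of $(t,\omega,u)\mapsto\|u_t(\omega,u)\|_{\mathbf{H}^s}$, which the continuity of the RDS supplies), whereas the paper's route imports a stronger SPDE a priori bound but in exchange yields a quantitative estimate for each fixed initial velocity $u$, not merely $\mu$-integrability on average. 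Your closing remark that $\log^+$ of the exponential Gronwall bound should be taken so as to land directly on $C\int_0^1|u_t|_{C^2}\,\mathrm{d}t$ (using $\log^+x\leq x$ for the prefactor) is the right way to handle the second-derivative term and matches the paper's computation.
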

\begin{proof}
Firstly,  recall a uniform estimation from \cite[Proposition A.3]{MR4404792} with a special case: for any $u\in \mathbb{H}$ one has that
\begin{align}
\int_{\Omega}\sup_{t\in[0,1]}\|u_t(\omega,u)\|_{\mathbf{H}^{s}}\mathrm{d}\mathbb{P}\lesssim_{s}1+\|u\|_{\mathbf{H}^{s}}.
\end{align}
 By utilizing  \Cref{23-8-7-1611},  we have that 
 \begin{align}
 \label{23-8-9-1444}
 \int_{\Omega}\sup_{t\in [0,1]}|u_t(\omega,u)|_{C^2}\mathrm{d}\mathbb{P}\lesssim_{s}1+\|u\|_{\mathbf{H}^{s}}.
 \end{align}
 According to \eqref{23-8-4-1509},     for any $x\in\mathbb{T}^2$ and  any $t>0$ one has that
 \begin{align}
 \label{23-8-9-1502}
 \varphi_t(\omega,u)x=x+\int_0^tu_{\tau}(\omega,u)(\varphi_{\tau}(\omega,u)x)\mathrm{d}\tau
 \end{align}
 holds for  $\mathbb{P}\times\mu$-a.s. $(\omega,u)\in\Omega\times\mathbb{H}$. Then, for any $i\in \{1,2\}$ one has that 
\begin{align}
|\partial_i\varphi_1(\omega,u)x|\leq 1+\int_0^1\|d_{{\varphi_t(\omega,u)x}}u_t(\omega,u)\|\cdot|\partial_i\varphi_t(\omega,u)x|\mathrm{d}t.
\end{align}
By using  Gronwal's inequality,  it is clear that $|\partial_i\varphi_1(\omega,u)x|\leq \exp(\int_0^1\|d_{\varphi_t(\omega,u)x}u_t(\omega,u)\|\mathrm{d}t)$. It follows that 
\begin{align*}
\int_{\Omega}\log^+\|d\varphi_1(\omega,u)\|\mathrm{d}\mathbb{P}&\leq\int_{\Omega}\int_0^1\sup_{x\in\mathbb{T}^2}\|d_{\varphi_t(\omega,u)x}u_t(\omega,u)\|\mathrm{d}t\mathrm{d}\mathbb{P}
\\&\overset{ \eqref{23-8-9-1444}} \lesssim_{s}1+\|u\|_{\mathbf{H}^{s}}.
\end{align*}
Taking  twice differential of  \eqref{23-8-9-1502} with respect to  space parameter,  for any $i,j\in\{1,2\}$ one has that 
\begin{align*}
|\partial_{ij}\varphi_1(\omega,u)|&\lesssim\int_0^1\|u_t(\omega,u)\|_{C^2}\big(|\partial_i\varphi_t(\omega,u)x|\cdot |\partial_j\varphi_t(\omega,u)x|+|\partial_{ij}\varphi_t{(\omega,u)}x|\big)\mathrm{d}t
\\&\lesssim \sup_{t\in[0,1]}\|u_t(\omega,u)\|_{C^2} \exp2(\int_0^1\|d_{\varphi_t(\omega,u)x}u_t(\omega,u)\|\mathrm{d}t)
\\&\quad+ \sup_{t\in[0,1]}\|u_t(\omega,u)\|_{C^2}\int_0^1|\partial_{ij}\varphi_t{(\omega,u)}x|\mathrm{d}t.
\end{align*}
By a similar argument as above, we have 
\begin{align*}
\int_{\Omega}\log^+ |d_2\varphi_1(\omega,u)|\mathrm{d}\mathbb{P}&\lesssim\int_{\Omega}\log\Big(\sup_{t\in[0,1]}\|u_t(\omega,u)\|_{C^2}+1\Big)+3\sup_{t\in[0,1]}\|u_t(\omega,u)\|_{C^2}\mathrm{d}\mathbb{P}
\\&\lesssim_{s}1+\|u\|_{\mathbf{H}^{s}}.
\end{align*}
Therefore,  together with  \Cref{23-8-9-2031} we have $\int_{\Omega\times\mathbb{H}}\log^+|\varphi_1(\omega,x)|_{C^2}\mathrm{d}(\mathbb{P}\times\mu)<+\infty.$  Finally, applying  the equality
$$\varphi_{t}(\omega,u)^{-1}x=x-\int_0^{t}u_{\tau}(\omega,u)(\varphi_{\tau}(\omega,u)\varphi_{t}^{-1}{(\omega,u)}x)\mathrm{d}\tau$$
for $t>0$ with above similar argument, we can obtain that   
$$\int_{\Omega\times\mathbb{H}}\log^+|\varphi_1(\omega,x)^{-1}|_{C^2}\mathrm{d}(\mathbb{P}\times\mu)<+\infty,$$
which implies the above proposition. 
\end{proof}

\subsection{Observable full-horseshoes}

To complete the proof of  \Cref{23-7-4-2042},  we recall a lemma to ensure  existence of  the full-horseshoes  which is from our previous work  \cite{axXiv: 230305027}. Although there is  a no  formal  result in \cite{axXiv: 230305027} as  following lemma,  the reader can begin with \cite[Lemma 4.5]{axXiv: 230305027}  to  obtain following lemma by using analogous arguments in \cite[Theorem 4.4  and Theorem 5.4]{axXiv: 230305027}.
\begin{lemma}
\label{23-7-8-2238}
Let $(\varphi,\mu)$ be an ergodic continuous\footnote{Here, $\varphi$ being a continuous RDS means that $\varphi_1(\omega)$ is a continuous map on $M$ for $\mathbb{P}$-a.s. $\omega\in\Omega$.} two-side measure-preserving  RDS on a compact metric space $M$ over a metric system $(\Omega,\mathscr{F},\mathbb{P}, (\theta_n)_{n\in\mathbb{Z}})$. If $h_{\mu}(\varphi)>0$, then  for any two disjoint non-empty closed balls $\{U_1, U_2\}$ of $M$ with 
 $$\mu\times_{\mathcal{P}_{\mu}}\mu\Big((\Omega\times U_1)\times (\Omega\times U_2)\Big)>0,$$
$\varphi$ has full-horseshoes on $\{U_1, U_2\}$, where $\mathcal{P}_{\mu}$ is the  Pinsker $\sigma$ partition of  measure-preserving RDS $(\varphi,\mu)$ and $\mu\times_{\mathcal{P}_{\mu}}\mu$ is the relative independent joining (see \cite[Chapter 6]{G}).
\end{lemma}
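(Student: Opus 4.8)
\emph{The plan} is to exploit $h_\mu(\varphi)>0$ to turn the Pinsker factor into a \emph{relatively Kolmogorov} base, then convert the measure-theoretic independence recorded by the hypothesis into a fiberwise, positive-density independence set for the pair $(U_1,U_2)$; the full-horseshoe is assembled at the end by compactness. First I would record the structural consequence of positive entropy. Since the Pinsker partition $\mathcal{P}_{\mu}$ is, by construction, the largest invariant factor carrying zero entropy, the extension $\pi\colon(\Omega\times M,\mathscr{F}\otimes\mathscr{B}_M,\mu,\Phi)\to(\Omega\times M,\mathcal{P}_{\mu},\mu,\Phi)$ has completely positive relative entropy, i.e.\ it is a relatively K extension. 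A relatively K extension is relatively weakly mixing of all orders, so that for every $k\geq2$ the $k$-fold relatively independent self-joining
$$\lambda^{(k)}:=\mu\times_{\mathcal{P}_{\mu}}\cdots\times_{\mathcal{P}_{\mu}}\mu$$
is ergodic under the diagonal action of $\Phi$. Writing $A_1:=\Omega\times U_1$ and $A_2:=\Omega\times U_2$, the hypothesis $\lambda^{(2)}(A_1\times A_2)>0$ says exactly that the pair $(A_1,A_2)$ ``sees'' the positive relative entropy.

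Next I would upgrade this single positive mass to full combinatorial independence. Recasting the hypothesis in the language of (relative) independence and invoking the relative/RDS analogue of the entropy--independence correspondence — this is where \cite[Lemma 4.5]{axXiv: 230305027} enters — the pair $(A_1,A_2)$ admits arbitrarily long independence blocks: for every $N$ there is a time window along which all $2^N$ patterns in $\{1,2\}$ are simultaneously realizable with positive $\lambda^{(2^N)}$-measure. Disintegrating over the base $\Omega$ and applying the pointwise ergodic theorem to the ergodic joinings $\lambda^{(k)}$, followed by a Borel--Cantelli argument over increasing $k$, I would extract for $\mathbb{P}\times\mu$-a.e.\ $(\omega,u)$ a set $J(\omega,u)\subset\mathbb{Z}_+$ of positive lower density such that for every finite $F\subset J(\omega,u)$ and every $s\in\{1,2\}^F$ the fiber set $\bigcap_{j\in F}\varphi_j(\omega,u)^{-1}(U_{s(j)})$ is nonempty. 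This step is modelled on \cite[Theorem 4.4 and Theorem 5.4]{axXiv: 230305027}.

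Finally, the full (infinite) itineraries are produced by compactness. For fixed $(\omega,u)$ and $s\in\{1,2\}^{J(\omega,u)}$, the sets $\bigcap_{j\in F}\varphi_j(\omega,u)^{-1}(U_{s(j)})$, indexed by finite $F\subset J(\omega,u)$, form a decreasing family of nonempty closed — hence compact — subsets of $M$ (using continuity of $\varphi_j(\omega,u)$ and closedness of $U_1,U_2$), so their intersection is nonempty and any point $x_s$ in it realizes the whole itinerary; this is precisely a full-horseshoe on $\{U_1,U_2\}$ along $J(\omega,u)$.

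I expect the main obstacle to lie in the middle step. A single positive value $\lambda^{(2)}(A_1\times A_2)>0$ only separates one pair of orbits, whereas the full-horseshoe demands \emph{all} $2^{|J|}$ itineraries, i.e.\ genuine combinatorial independence along a positive-density time set. Securing this requires both the ergodicity of the relatively independent joinings of \emph{all} orders and their concentration on positive-density independence sets, and then the transfer of this information from the joining to a pointwise fiberwise statement valid for $\mathbb{P}\times\mu$-a.e.\ base point — precisely the combinatorial and coding machinery that must be imported and adapted from \cite{axXiv: 230305027}.
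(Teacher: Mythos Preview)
Your proposal is correct and essentially follows the route the paper itself indicates: the paper does not give an independent proof of this lemma but refers the reader to \cite[Lemma 4.5]{axXiv: 230305027} and the arguments of \cite[Theorems 4.4 and 5.4]{axXiv: 230305027}, which is exactly the machinery you invoke (relative K over the Pinsker factor $\Rightarrow$ ergodicity of the higher-order relatively independent joinings $\Rightarrow$ positive-density independence sets fiberwise $\Rightarrow$ compactness). One small notational slip: in the abstract setting of this lemma the base is $(\Omega,\mathbb{P})$, so the almost-sure statement should be for $\mathbb{P}$-a.e.\ $\omega\in\Omega$ rather than for ``$\mathbb{P}\times\mu$-a.e.\ $(\omega,u)$'', which is the notation from the Lagrangian-flow application.
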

After completing all the preparatory work,  we begin to give the  proof of  \Cref{23-7-4-2042}.

\begin{proof}[Proof of  \Cref{23-7-4-2042}]
 Let 
 $$\hat{\pi}: (\hat{\Omega},\hat{\mathbb{P}}, (\hat{\theta}_n)_{n\in\mathbb{Z}})\to (\Omega\times\mathbb{H}, \mathbb{P}\times\mu, (\mathscr{U}_{n})_{n\in\mathbb{Z}_+})$$ be the invertible  extension system defined as \eqref{23-10-17-0002}, where $(\mathscr{U}_{n})_{n\in\mathbb{Z}_+}$ is the measurable maps generated by $(u_t)_{t\in\mathbb{R}_+}$  as  \eqref{23-8-15-1253},  and let $(\hat{\varphi},\hat{\mu})$  be the invertible extension of measure-preserving RDS $(\varphi^{(1)}, \mathbb{P}\times\mu\times\text{vol})$ on $\mathbb{T}^2$ over the metric system $(\Omega\times\mathbb{H},\mathbb{P}\times\mu, (\mathscr{U}_n)_{n\in\mathbb{Z}_+})$ (see \eqref{23-8-15-1413} for details).  It is clear to see that if   RDS  $\hat{\varphi}$ has observable full-horseshoes, then RDS $\varphi$ has observable RDS.  Therefore we only need to prove that RDS $\hat{\varphi}$ has observable RDS.

Firstly,  we verify that $(\hat{\varphi},\hat{\mu})$ is a $K$-system. By \Cref{23-8-9-2100}  and integral changing formula, one has that 
$$\int_{\hat{\Omega}}\big(\log^+\|\varphi_1(\hat{\omega})\|_{C^2}+\log^+\|\varphi_1(\hat{\omega})^{-1}\|_{C^2}\big)\mathrm{d}\hat{\mathbb{P}}(\hat{\omega})<+\infty.$$
By \cite[Theorem 1.6]{MR4404792},  we know that  $(\varphi^{(1)},\mathbb{P}\times\mu\times\text{vol})$ has positive Lyapunov exponents. Combining  \Cref{23-8-15-1431}, \Cref{23-8-15-1444} and \Cref{23-8-13-1921}, it is clear to see that $(\hat{\varphi},\hat{\mu})$ satisfies the all assumptions  in \Cref{23-6-27-1553}. Hence that the measure-preserving RDS $(\hat{\varphi},\hat{\mu})$ is a $K$-system.  It follows that 
  \begin{align}
  \label{23-9-3-13-17}
  \hat{\mu}\times_{\mathcal{P}_{\mu}}\hat{\mu}=\int_{\hat{\Omega}}\hat{\mu}_{\hat{\omega}}\times\hat{\mu}_{\hat{\omega}}\mathrm{d}\hat{\mathbb{P}}(\hat{\omega}),
  \end{align}
  where $\hat{\mu}=\int_{\hat{\Omega}}\hat{\mu}_{\hat{\omega}}\mathrm{d}\mathbb{P}$  is the disintegration  of the measure $\hat{\mu}$ with respect to measurable partition $\hat{\varpi}=\{\hat{\omega}\times\mathbb{T}^2: \hat{\omega}\in\hat{\Omega}\}$. Next, we would like to prove  following claim. 
  \begin{claim}
  \label{23-9-3-1320}
The measure $\hat{\mu}$ is full-support. Particularly, for $\hat{\mathbb{P}}$-a.s. $\hat{\omega}\in\hat{\Omega}$,  $\text{supp}(\hat{\mu}_{\hat{\omega}})=\mathbb{T}^2$. 
  \end{claim}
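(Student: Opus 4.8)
The plan is to pin down $\hat{\mu}$ completely: I claim $\hat{\mu}=\hat{\mathbb{P}}\times\text{vol}$, and the statement is then immediate, since the disintegration of a product measure over $\hat{\varpi}$ is the constant family $\hat{\mu}_{\hat{\omega}}=\text{vol}$, whose support is all of $\mathbb{T}^2$, while $\text{supp}(\hat{\mathbb{P}}\times\text{vol})=\text{supp}(\hat{\mathbb{P}})\times\mathbb{T}^2$.

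The one structural input I would use is incompressibility. Since $\text{div}(u_t)=0$, the Lagrangian flow is volume-preserving: writing $J_t(x)=\det d_x\varphi_t(\omega,u)$ and differentiating \eqref{23-8-4-1509} in the space variable one gets $\frac{d}{dt}\log J_t(x)=(\text{div}\,u_t)(\varphi_t(\omega,u)x)=0$, so $J_t\equiv 1$ and $(\varphi_t(\omega,u))_*\text{vol}=\text{vol}$ for every $t$. Consequently, for each $n\in\mathbb{Z}$ the fibre diffeomorphism $\hat{\varphi}_n(\hat{\omega})$ of the invertible extension --- which by \eqref{23-8-15-1413} is $\varphi_n(\hat{\pi}(\hat{\omega}))$ for $n\geq 0$ and $\varphi_{-n}(\hat{\pi}(\hat{\theta}_n\hat{\omega}))^{-1}$ for $n<0$ --- preserves $\text{vol}$ on $\mathbb{T}^2$.

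Next I would check that $\hat{\mathbb{P}}\times\text{vol}$ is an ergodic invariant measure of $\hat{\varphi}$ projecting to $\mathbb{P}\times\mu\times\text{vol}$. Its marginal on $\hat{\Omega}$ is $\hat{\mathbb{P}}$, and $(\hat{\pi}\times\mathrm{id}_{\mathbb{T}^2})_*(\hat{\mathbb{P}}\times\text{vol})=(\hat{\pi}_*\hat{\mathbb{P}})\times\text{vol}=(\mathbb{P}\times\mu)\times\text{vol}$ because $\hat{\pi}$ is a factor map onto $(\Omega\times\mathbb{H},\mathbb{P}\times\mu,(\mathscr{U}_n))$. For invariance, letting $\Phi_1(\hat{\omega},x)=(\hat{\theta}_1\hat{\omega},\hat{\varphi}_1(\hat{\omega})x)$ denote the induced skew product, for measurable $\hat{A}\subset\hat{\Omega}$ and $B\subset\mathbb{T}^2$ one computes
$$(\Phi_1)_*(\hat{\mathbb{P}}\times\text{vol})(\hat{A}\times B)=\int_{\hat{\theta}_1^{-1}\hat{A}}\text{vol}\big(\hat{\varphi}_1(\hat{\omega})^{-1}B\big)\,\mathrm{d}\hat{\mathbb{P}}(\hat{\omega})=\text{vol}(B)\,\hat{\mathbb{P}}(\hat{A}),$$
using the volume-preservation of the previous step and $(\hat{\theta}_1)_*\hat{\mathbb{P}}=\hat{\mathbb{P}}$; and $\hat{\mathbb{P}}\times\text{vol}$ is ergodic because any of its ergodic components still projects to the ergodic measure $\mathbb{P}\times\mu\times\text{vol}$ (the base $(\Omega\times\mathbb{H},\mathbb{P}\times\mu,(\mathscr{U}_n))$ being ergodic, as in \Cref{23-8-15-1431}), hence, by the uniqueness asserted in \cite[Theorem 1.7.2]{A}, coincides with $\hat{\mu}$ --- which at the same time forces $\hat{\mu}=\hat{\mathbb{P}}\times\text{vol}$.

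Finally I would read off the conclusion: $\hat{\mu}_{\hat{\omega}}=\text{vol}$ for $\hat{\mathbb{P}}$-a.s. $\hat{\omega}$, so $\text{supp}(\hat{\mu}_{\hat{\omega}})=\mathbb{T}^2$; and $\text{supp}(\hat{\mu})=\text{supp}(\hat{\mathbb{P}})\times\mathbb{T}^2$, which is all of $\hat{\Omega}\times\mathbb{T}^2$ because $\mathbb{P}$ has full support on $\Omega$ and $\mu$ has full support on $\mathbb{H}$ (weak irreducibility of the stochastic Navier--Stokes semigroup, \cite[Lemma 7.3]{MR4404792}), so $\hat{\mathbb{P}}$ has full support on $\hat{\Omega}$. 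I do not expect a genuine obstacle in this claim; the only mildly delicate point is the bookkeeping at the end of the previous paragraph --- transporting invariance and ergodicity of the product measure through the invertible extension --- which is why I would isolate it and rely on the uniqueness statement \cite[Theorem 1.7.2]{A} rather than trying to manipulate the conditional measures $\hat{\mu}_{\hat{\omega}}$ directly.
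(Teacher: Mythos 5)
Your proof is correct, but it takes a genuinely different route from the paper's. You identify $\hat{\mu}$ exactly: since $\operatorname{div}(u_t)=0$, Liouville's formula gives $\det d_x\varphi_t(\omega,u)\equiv 1$, so every fibre map of $\hat{\varphi}$ preserves $\text{vol}$, hence $\hat{\mathbb{P}}\times\text{vol}$ is a $\hat{\Phi}$-invariant measure with marginal $\hat{\mathbb{P}}$ projecting to $\mathbb{P}\times\mu\times\text{vol}$, and the uniqueness of the lift in \cite[Theorem 1.7.2]{A} (which the paper itself invokes to define $\hat{\mu}$) forces $\hat{\mu}=\hat{\mathbb{P}}\times\text{vol}$ and $\hat{\mu}_{\hat{\omega}}=\text{vol}$ a.s.; your separate ergodicity check is actually superfluous once uniqueness of the lift is in hand. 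The paper instead argues by contradiction without identifying $\hat{\mu}$: if $\text{supp}(\hat{\mu}_{\hat{\omega}})\neq\mathbb{T}^2$ on a positive-measure set, ergodicity makes this almost sure, producing a $\hat{\mu}$-null random open set $\hat{U}$; a measurable section $\check{\pi}$ of the factor map $\hat{\pi}$ (from \cite[Lemma 4.3]{axXiv: 230305027}) transports $\hat{U}$ to a random open set over $\Omega\times\mathbb{H}$, and $(\hat{\pi}\times\text{Id}_{\mathbb{T}^2})_*\hat{\mu}=\mathbb{P}\times\mu\times\text{vol}$ then gives $0=\int\text{vol}(U(\omega,u))\,\mathrm{d}(\mathbb{P}\times\mu)>0$. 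Your argument buys a sharper conclusion (the sample measures are exactly Lebesgue, not merely fully supported) at the cost of relying on incompressibility, a structural feature of this particular flow; the paper's argument is more robust, needing only that the projected one-sided measure has fully supported fibre measures, and so would survive in settings without volume preservation. One caveat: your final assertion that $\text{supp}(\hat{\mathbb{P}})=\hat{\Omega}$ (via full support of $\mu$ on $\mathbb{H}$ and of $\hat{\mathbb{P}}$ on the abstract inverse limit) is not justified and would need care, but it is inessential --- only the fibre statement $\text{supp}(\hat{\mu}_{\hat{\omega}})=\mathbb{T}^2$ is used downstream, and that you establish cleanly.
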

  \begin{proof}
Assume that  there exists a positive $\hat{\mathbb{P}}$-measure subset $\hat{\Omega}_1$ of $\hat{\Omega}$ such that $\text{supp}(\hat{\mu}_{\hat{\omega}})\neq\mathbb{T}^2$ for  any $\hat{\omega}\in\hat{\Omega}_1$.  By \cite[Proposition 1.6.11]{A} and ergodicity of $\hat{\mathbb{P}}$,  one has that $\hat{\mathbb{P}}(\hat{\Omega}_1)=1$.  Then there exists a random open set  $\hat{U}$ on $(\hat{\Omega},\hat{\mathscr{F}},\hat{\mathbb{P}})$ satisfying  $\hat{\mu}(\hat{U})=0$ (see \cite[Chapter 1.6]{A}), where random open set  $\hat{U}$ means  that 
\begin{enumerate} 
\item $\hat{U}$ is Borel measurable subset of $\hat{\Omega}\times\mathbb{T}^2$;
\item  $\hat{U}(\hat{\omega}):=\{x\in \mathbb{T}^2: (\omega, x)\in\hat{U}\}$ is an open subset of $\mathbb{T}^2$ for $\hat{\mathbb{P}}$-a.s. $\hat{\omega}\in\hat{\Omega}$.
\end{enumerate}  
By \cite[Lemma 4.3]{axXiv: 230305027}, there exists a measurable map 
$$\check{\pi}: (\Omega\times\mathbb{H},\mathscr{F}\otimes\mathscr{F}_{\mathbb{H}},\mathbb{P}\times\mu)\to(\hat{\Omega},\hat{\mathscr{F}},\hat{\mathbb{P}})  $$
such that $\hat{\pi}\circ\check{\pi}(\omega,u)=(\omega,u)$ for $\mathbb{P}\times\mu$-a.s. $(\omega,u)\in\Omega\times\mathbb{T}^2$.
Therefore, there exists a random open set $U$  on $(\Omega\times\mathbb{H},\mathscr{F}\otimes\mathscr{F}_{\mathbb{H}},\mathbb{P}\times\mu)$ satisfying that  $U(\omega,u)=\hat{U}\big(\check{\pi}(\omega,u ))\big)$ for $\mathbb{P}\times\mu$-a.s. $(\omega,u)\in\Omega\times\mathbb{H}$.  Then one has that 
\begin{align*}
\tag{By $(\hat{\pi}\times\text{Id}_{\mathbb{T}^2})_*\hat{\mu}=\mathbb{P}\times\mu\times\text{vol}$}0=\hat{\mu}(\hat{U})&=\mathbb{P}\times\mu\times\text{vol}(U)
\\&=\int_{\Omega\times\mathbb{H}}\text{vol}(U(\omega,u))\mathrm{d}(\mathbb{P}\times\mu)>0,
\end{align*}
which is contradictory.
\end{proof}
Finally, applying   \eqref{23-9-3-13-17} and   \Cref{23-9-3-1320}, one has that for any two disjoint non-empty closed balls $\{U_1, U_2\}$ of $\mathbb{T}^2$,
$$\hat{\mu}\times_{\mathcal{P}_{\hat{\mu}}}\hat{\mu}\Big((\hat{\Omega}\times U_1)\times (\hat{\Omega}\times U_2)\Big)=\int_{\hat{\Omega}}\hat{\mu}_{\hat{\omega}}(U_1)\hat{\mu}_{\hat{\omega}}(U_2)\mathrm{d}\hat{\mathbb{P}}(\hat{\omega})>0.$$
By \Cref{23-7-8-2238}, we know that the RDS  $\hat{\varphi}$ has full-horseshoes  on $\{U_1, U_2\}$.  Hence that   $\hat{\varphi}$ has observable full-horseshoes.  This finishes the proof of  \Cref{23-7-4-2042}.
\end{proof}
\iffalse
\subsection{Proof of  \Cref{23-9-30-1752} }
In this subsection, we give a quick proof of  \Cref{23-9-30-1752}  based on the  proof  details  in \cite{axXiv: 230305027} and \cite{HL}.   It is enough to prove the following claim 
\begin{claim}
For any small positive constant $r$,   the    following function  
$$\hat{\omega}\mapsto \inf_{x\in \mathbb{T}^2}\hat{\mu}_{\omega}(\mathcal{B}(x, r))$$
   is positive  for $\hat{\mathbb{P}}$-a.s. $\hat{\omega}\in\hat{\Omega}$.
\end{claim}
\begin{proof}
Fix a small positive constant $r$. Let $\mathcal{D}:=\{x_n\}_{n\in\mathbb{N}}$ be a dense subset of $\mathbb{T}^2$.  It is clear that there exists a $\mathbb{P}$-full measure subset $\Omega_1$ such that for any $\omega\in\Omega_1$ one has that 
 $$\mu_{\omega}(\partial\mathcal{B}(x_n, r))=0$$
 for any $x_n\in \mathcal{D}$.  Then for any $x\in\mathbb{T}^2$  we have  that 
 $$\mu_{\omega}(\partial \mathcal{B}(x,r))=\lim_{\epsilon\to 0}(\partial\mathcal{B}(x_n, r),\epsilon )$$

  It follows that $x\mapsto\mu_{\omega}(\mathcal{B}(x, r))$ is continuous. 
\end{proof}

\fi

Through the proof,   one can obtain following abstract result (\Cref{23-8-19-2012}) to ensure the existence of the observable full-horseshoes for RDS.  It is notable that this result is even new when the RDS degenerates a deterministic dynamical system.  Deterministic systems under  powerful assumptions of  \Cref{23-8-19-2012}  have horseshoes (see \cite{MR0573822}),  but we can't get  precise  information about the locations of horseshoes.  The shining point of our result  is that  it  gives a kind of observable version of  chaotic structure (full-horseshoe).  
\begin{proposition}
\label{23-8-19-2012}
Let $(\varphi,\mu)$ be an ergodic two-side  $C^2$ measure-preserving  RDS on a closed Riemannian manifold $M$ over a metric system $(\Omega,\mathscr{F},\mathbb{P},(\theta_n)_{n\in\mathbb{Z}})$ satisfying \eqref{23-7-5-2042}.  If $\mu$ is hyperbolic, SRB, totally ergodic  and full-support, then $\varphi$ has observable full-horseshoes.
\end{proposition}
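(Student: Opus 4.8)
The plan is to deduce \Cref{23-8-19-2012} by abstracting the final segment of the proof of \Cref{23-7-4-2042}. The key observation is that nowhere in that argument did the specific structure of the Lagrangian flow or the Navier--Stokes dynamics enter, beyond the four facts that were checked one by one: the $C^2$ integrability condition \eqref{23-7-5-2042}, hyperbolicity, the SRB property, and total ergodicity (plus full support of $\mu$, which was \Cref{23-9-3-1320}). So I would restate the proof almost verbatim, replacing $(\varphi^{(1)},\mathbb{P}\times\mu\times\mathrm{vol})$ by the abstract $(\varphi,\mu)$ and $\mathbb{T}^2$ by $M$.

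First I would pass to the invertible extension $(\hat\varphi,\hat\mu)$ over $(\hat\Omega,\hat{\mathscr F},\hat{\mathbb P},(\hat\theta_n)_{n\in\mathbb Z})$ as in \eqref{23-10-17-0002}--\eqref{23-8-15-1413}; by \Cref{23-8-13-1921} it inherits the integrability condition, the Lyapunov spectrum (hence hyperbolicity), the entropy (hence positivity of $h_{\hat\mu}(\hat\varphi)$ via Pesin's formula and the SRB hypothesis, \Cref{23-8-15-1444}), the SRB property, and total ergodicity (since total ergodicity of $(\varphi,\mu)$ gives ergodicity of $(\hat\varphi,\hat\mu)$ over every $(\hat\theta_{nm})_{m\in\mathbb Z}$ by the same extension argument). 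It also suffices to prove the conclusion for $\hat\varphi$, since a horseshoe for $\hat\varphi$ projects to one for $\varphi$ via $\hat\pi$. Next, $(\hat\varphi,\hat\mu)$ satisfies (A1)--(A6) of \Cref{23-8-13-2002}, so \Cref{23-6-27-1553} yields that $(\hat\varphi,\hat\mu)$ is a $K$-system, i.e. its Pinsker partition is $\hat\varpi$. Consequently the relatively independent self-joining over the Pinsker partition is the fibrewise product
\begin{align*}
\hat\mu\times_{\mathcal P_{\hat\mu}}\hat\mu=\int_{\hat\Omega}\hat\mu_{\hat\omega}\times\hat\mu_{\hat\omega}\,\mathrm d\hat{\mathbb P}(\hat\omega),
\end{align*}
exactly as in \eqref{23-9-3-13-17}.

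Then I would use full support of $\mu$ to get full support of $\hat\mu_{\hat\omega}$ for $\hat{\mathbb P}$-a.s.\ $\hat\omega$. Here I would reproduce the argument of \Cref{23-9-3-1320}: if the fibrewise supports were proper on a positive-measure set, ergodicity of $\hat{\mathbb P}$ and \cite[Proposition 1.6.11]{A} would make it a full-measure set, producing a random open set $\hat U$ with $\hat\mu(\hat U)=0$; pushing forward by $\hat\pi\times\mathrm{Id}_M$ and using $(\hat\pi\times\mathrm{Id}_M)_*\hat\mu=\mu$ gives $\int_{\Omega}\mathrm{vol}(U(\omega))\,\mathrm d\mu=0$ with $U(\omega)$ open and (generically) non-empty, contradicting full support of $\mu$ on $M$ (note the volume measure on a closed Riemannian manifold gives positive mass to every non-empty open set). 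With full fibrewise support in hand, for any two disjoint non-empty closed balls $\{U_1,U_2\}$ of $M$,
\begin{align*}
\hat\mu\times_{\mathcal P_{\hat\mu}}\hat\mu\Big((\hat\Omega\times U_1)\times(\hat\Omega\times U_2)\Big)=\int_{\hat\Omega}\hat\mu_{\hat\omega}(U_1)\,\hat\mu_{\hat\omega}(U_2)\,\mathrm d\hat{\mathbb P}(\hat\omega)>0,
\end{align*}
and \Cref{23-7-8-2238} (applicable since $h_{\hat\mu}(\hat\varphi)>0$ and $\hat\varphi$ is a two-side ergodic continuous RDS on the compact manifold $M$) shows $\hat\varphi$ has a full-horseshoe on $\{U_1,U_2\}$. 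Since $\{U_1,U_2\}$ were arbitrary, $\hat\varphi$, and hence $\varphi$, has observable full-horseshoes.

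I do not expect a genuine obstacle here, since every step is a verbatim transcription of a step already carried out for the Lagrangian flow; the only points requiring a sentence of care are (i) checking that the invertible extension genuinely inherits total ergodicity (the extension $(\hat\Omega,\hat{\mathbb P},(\hat\theta_{nm})_{m})$ of the totally ergodic base is still ergodic, so \Cref{23-6-27-1553} applies with every $n$ — though in fact one only needs (A6) for the fixed $n$ used inside its proof), and (ii) making sure the full-support argument uses only that $\mathrm{vol}$ assigns positive measure to non-empty open subsets of the closed manifold $M$, which holds for any Riemannian volume. If one wanted to be maximally economical, the entire statement could simply be phrased as: ``the proof of \Cref{23-7-4-2042} used only \eqref{23-7-5-2042}, hyperbolicity, SRB, total ergodicity and full support'', and left at that — but I would spell out the three displays above for the reader's convenience.
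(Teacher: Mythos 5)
Your proposal matches the paper exactly in spirit: the paper gives no separate proof of \Cref{23-8-19-2012}, stating only that it is obtained ``through the proof'' of \Cref{23-7-4-2042}, and your write-up is precisely that abstraction (invertible extension, verification of (A1)--(A6), \Cref{23-6-27-1553} to get the $K$-property, hence \eqref{23-9-3-13-17}, then positivity of the joining on $(\hat\Omega\times U_1)\times(\hat\Omega\times U_2)$ and \Cref{23-7-8-2238}). Two small remarks. First, the hypothesis already makes $(\varphi,\mu)$ two-sided, so the passage to the invertible extension is vacuous here; you can apply \Cref{23-6-27-1553} and \Cref{23-7-8-2238} to $(\varphi,\mu)$ directly. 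Second, and more substantively, your transcription of \Cref{23-9-3-1320} does not survive the abstraction as written: the contradiction in the paper's claim comes from $(\hat\pi\times\mathrm{Id})_*\hat\mu=\mathbb{P}\times\mu\times\mathrm{vol}$, i.e.\ from the un-extended fibre measures being the \emph{volume}, whereas for an abstract SRB measure $\mu$ the quantity $\int_\Omega\mathrm{vol}(U(\omega))\,\mathrm{d}\mathbb{P}$ is unrelated to $\mu(U)$ and yields no contradiction. The correct reading is that ``full-support'' in \Cref{23-8-19-2012} means $\mathrm{supp}(\mu_\omega)=M$ for $\mathbb{P}$-a.s.\ $\omega$, in which case no analogue of \Cref{23-9-3-1320} is needed at all and the positivity $\int_\Omega\mu_\omega(U_1)\mu_\omega(U_2)\,\mathrm{d}\mathbb{P}>0$ is immediate; with that adjustment your argument is complete.
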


\providecommand{\bysame}{\leavevmode\hbox to3em{\hrulefill}\thinspace}
\providecommand{\MR}{\relax\ifhmode\unskip\space\fi MR }
% \MRhref is called by the amsart/book/proc definition of \MR.
\providecommand{\MRhref}[2]{%
  \href{http://www.ams.org/mathscinet-getitem?mr=#1}{#2}
}
\providecommand{\href}[2]{#2}

\end{document}